\theoremstyle{plain}
\newtheorem{theorem}{Theorem}[section]
\newtheorem{lemma}[theorem]{Lemma}
\theoremstyle{definition}
\theoremstyle{remark}
\newtheorem{remark}[theorem]{Remark}
\newcommand{\rchi}{\chi}
\newcommand{\Lp}[1]{L^{#1}}
\DeclareMathOperator{\curl}{curl}
\DeclareMathOperator{\Id}{Id}
\DeclareMathOperator{\id}{id}
\DeclareMathOperator{\II}{II}
\DeclareMathOperator{\dist}{dist}
\DeclareMathOperator{\spt}{spt}  
\DeclareMathOperator{\SO}{SO}
\DeclareMathOperator{\Der}{D}
\DeclareMathOperator{\restrict}{\hspace{-0.2ex}\measurerestr\hspace{-0.2ex}}
\DeclareMathOperator{\Tan}{Tan} 
\DeclareMathOperator{\Nor}{Nor} 
\def\Xint#1{\mathchoice
   {\XXint\displaystyle\textstyle{#1}}%
   {\XXint\textstyle\scriptstyle{#1}}%
   {\XXint\scriptstyle\scriptscriptstyle{#1}}%
   {\XXint\scriptscriptstyle\scriptscriptstyle{#1}}%
   \!\int}
\def\XXint#1#2#3{{\setbox0=\hbox{$#1{#2#3}{\int}$}
     \vcenter{\hbox{$#2#3$}}\kern-.5\wd0}}
\def\dashint{\Xint-}
\newcommand{\measurerestr}{%
  \,\raisebox{-.127ex}{\reflectbox{\rotatebox[origin=br]{-90}{$\lnot$}}}\,%
}
\renewcommand*\env@matrix[1][\arraystretch]{%
  \edef\arraystretch{#1}%
  \hskip -\arraycolsep
  \let\@ifnextchar\new@ifnextchar
  \array{*\c@MaxMatrixCols c}}
\g@addto@macro\bfseries{\boldmath}
\newcommand{\Z}{\mathbb{Z}}
\newcommand{\R}{\mathbb{R}}
\newcommand{\eps}{\varepsilon}
\begin{document}
\begin{center}
\begin{Large}
A Blake-Zisserman-Kirchhoff theory \\ 
for plates with soft inclusions 
\end{Large}
\\[0.5cm]
\begin{large}
Mario Santilli\footnote{Università degli studi dell'Aquila, Italy, {\tt mario.santilli@univaq.it}} and Bernd Schmidt\footnote{Universität Augsburg, Germany, {\tt bernd.schmidt@math.uni-augsburg.de}}
\end{large}
\\[0.5cm]
\today
\\[1cm]
\end{center}

\begin{abstract}
We consider a two phase elastic thin film with soft inclusions subject to bending dominated deformations. The  soft (void) phase may comprise asymptotically small droplets  within the elastic matrix. We perform a dimension reduction analysis and obtain a novel `Blake-Zisserman-Kirchhoff' functional on a natural space of `flat and fractured' two-dimensional isometric immersions that combines Kirchhoff's classical plate theory with Blake-Zisserman type surface energy contributions at cracks, folds and the boundary of voids. 
\end{abstract}

\begin{small}

\noindent{\bf Keywords.} Thin structures, Kirchhoff plate theory, isometric immersions, Blake-Zisserman functional, dimension reduction, brittle fracture, soft inclusions, Gamma-convergence. 
\medskip

\noindent{\bf Mathematics Subject Classification.} 
74K20, 
74R10, 
74A50, 
49J45,  
53A05 
\end{small}

\tableofcontents

\section{Introduction}

We examine materials consisting of two phases: an elastic matrix and inclusions that do not show any resistance to deformations such as voids. Such materials occur in a variety of quite different applications ranging from biology to geophysics to material science to medicine. We refer to \cite{SantilliSchmidt:21} for a more detailed account of applications including references to the literature. Our focus in this contribution lies on thin structures which are of particular interest in mechanical applications. 

Mathematically such systems are modeled in the bulk by energy functionals of the form 
\begin{align}\label{eq:bulk-En}
  (y, A) 
  \mapsto \int_{\Omega \setminus A} 
    W (\nabla y(x)) \, \mathrm{d}x 
    + \int_{\Omega \cap \partial^* A} \psi (\nu(A)) \, \mathrm{d} \mathcal{H}^{2}. 
\end{align}
Here the Lipschitz domain $\Omega \subset \R^3$ is the reference configuration of the body, $y \in W^{1,2}(\Omega, \R^3)$ (say) is a deformation mapping whose elastic energy is given in terms of a stored energy function $W : \R^{3 \times 3} \to \R$. 
$A$ is a set of finite perimeter contained in $\Omega$ that represents the void part with $\partial^\ast A$ and $\nu(A)$ respectively denoting the measure-theoretic boundary and (exterior) unit-normal of $A$. We refer to  Section~\ref{sec:NotationPreliminaries} for details on the notation used in this introduction. Finally, $\psi$ is a norm on $\R^3$ that measures the surface energy per unit area of the material/void interface. 
Such energy functionals and their relaxation have been considered in \cite{BraidesChambolleSolci:07}, motivated by investigations on epitaxially strained films, cp.\ \cite{BonnetierChambolle:02, ChambolleSolci:07}; see also \cite{CrismaleFriedrich:20} for recent results and a comprehensive account of the literature in this direction. 
As it turns out, the relaxed functional acts on pairs $(y, A)$ of $SBV$ functions and sets of finite perimeter $A$ (cf.\ Section~\ref{sec:NotationPreliminaries}) where jumps in $y$ may result from thin channels forming in the void region. 

Effective theories have been derived from \eqref{eq:bulk-En} in the regime of linearized elasticity in \cite{FriedrichKreutzZemas:21} and for membranes in our recent contribution \cite{SantilliSchmidt:21}. There we have considered such functionals on thin films with $\Omega_h = \omega \times (0,h)$, $0 < h \ll 1$, and have derived an effective dimensionally reduced two phase theory for elastic membranes with soft inclusion in the limit $h \to 0$, thus extending the classical results \cite{LeDretRaoult:95,BraidesFonseca:01} for purely elastic, respectively, brittle materials. Yet, while membrane theories appropriately describe elastic deformations of rubber-like materials, they do not capture the leading order effects in stiffer (and more brittle) materials that respond elastically only to very small strains. 
In the absence of a significant plastic regime these typically develop cracks already as a result of small displacements. 
E.g., a membrane theory, even if allowing for fracture, cannot adequately describe a plate that develops bending cracks.

The envisioned extension to plates, which is the main aim of the present contribution, turns out to be considerably more involved. 
Our main goal is to describe bending dominated deformations in the elastic zone, so that the bulk energy will be of the order $h^3$, $0 < h \ll 1$ being the small film height. 
At the same time, the surface area of significant cracks will typically scale with $h$ and, hence, a naive limit $h \to 0$ for any fixed material will result in a purely elastic Kirchhoff plate theory, cp.\ \cite{FrieseckeJamesMueller:02}. 
(In fact this is inline with the experience that even very brittle materials such as glass may undergo large bending if only thin enough.) 
Yet, nontrivial results are obtained when one considers the strength of the material, which specifies the relation of the surface energy to the elastic moduli, as a second small parameter and consequently allows it to explicitly depend on $h$. 
So in effect one is lead to investigate a sequence of thin films of specific materials asymptotically as $h \to 0$. 
The most interesting scenario (and mathematically the most demanding) is when both bulk and surface energies contribute at the same scale, which in the present case leads us to considering surface energy contributions scaling with $h^2$ per unit surface area. 
Then indeed, the bending energy of the matrix and the interface energy are of the same order $h^3$, and the limiting functional will depend in a non-trivial way of both bulk and surface contributions. 
(Other scaling regimes are significantly more elementary and will lead to either a pure Kirchhoff plate theory or a degenerate trivial limiting functional.) 
We furthermore introduce a technical modeling assumption by restricting our attention to inclusions that satisfy a suitable `minimal droplet assumption', see the discussion below. 

In the absence of voids, the derivation of a dimensionally reduced theory for thin plates is a classical problem in elasticity theory, \cite{Euler,Kirchhoff,vonKarman}, also cp.\ \cite{Love,Ciarlet-II:97,Ciarlet-III:00}. 
Yet, first rigorous results on variational convergence to effective limit models are comparably recent, \cite{AcerbiButtazzoPercivale:88,AnzellottiBaldoPercivale:94,AcerbiButtazzoPercivale:91,LeDretRaoult:95}. A fundamental step towards much of the subsequent progress was achieved in \cite{FrieseckeJamesMueller:02} where a novel geometric rigidity estimate was established that carries the Korn inequality to a nonlinear setting. It allowed the authors to perform a rigorous passage from 3d nonlinear elasticity to Kirchhoff's plate theory in the bending dominated regime. As we will see, this indeed captures the behavior of the elastic matrix also in our case. Both \cite{FrieseckeJamesMueller:02} and the hierarchy of plate models found in \cite{FrieseckeJamesMueller:06} have laid the foundation to an abundance of extensions in different directions, among them shell theories \cite{FrieseckeJamesMoraMueller:03,LewickaMoraPakzad:08}, atomistic parent models \cite{Schmidt:06,BraunSchmidt:19}, non-trivial elastic response as for incompressible materials  \cite{ContiDolzmann:09} or composites with highly oscillatory elastic moduli \cite{NeukammVelvic13,HornungNeukammVelvic14,HornungPawelczykVelvic18} and, notably, multilayers and non-Euclidean plates \cite{Schmidt:07a,Schmidt:07b,LewickaMahadevanPakzad:11,BhattacharyaLewickaSchaeffner:16,LewickaLucic:18,MaorShachar:19,deBenitoSchmidt:19a,deBenitoSchmidt:19b,BoehnleinNeukammPadilla-GarzaSander:22}. Also the convergence of equilibria and dynamic solutions have been established in special cases 
\cite{MuellerPakzad:08,MoraMuellerSchultz:07,AbelsMoraMueller:11}. Of particular relevance to our set-up are extensions to brittle materials. For membranes a dimension reduction has been carried out in \cite{BraidesFonseca:01} in the static case and in \cite{Babadjian:06} for quasistatic evolutions. Beyond the membrane energy regime little appears to be known except for a recent contribution on sheets folded along a pre-assigned curve \cite{BartelsBonitoHornung:21} and the derivation of a general `Griffith-Euler-Bernoulli theory' for thin brittle beams from a nonlinear 2d Griffith functional achieved in \cite{Schmidt:17}.

In view of the above discussion on the scaling of energies we consider a Lipschitz domain $\omega \subset \mathbb{R}^2$, we set $\Omega_h = \omega \times (0,h)$ and we study energy functionals of the form
\begin{align*}
  (y, A) 
  \mapsto h^{-3} \int_{\Omega_h \setminus A} 
    W (\nabla y(x)) \, \mathrm{d}x 
    + h^{-1} \int_{\Omega_h \cap \partial^* A} \psi (\nu(A)) \, \mathrm{d} \mathcal{H}^{2}, 
\end{align*}
where $W$ has a single non-degenerate potential well at $\SO(3)$. We seek to prove a $\Gamma$-convergence type passage towards a limiting plate functional as $h \to 0$ under the hypothesis of a minimal droplet assumption for the inclusion $ A $. It guarantees that $A$ cannot contain too many small droplets of size $\ll h$ which contribute significantly to its surface measure, while we remark that droplets comparable to or larger than the film height are not restricted. We quantify this in \eqref{eq:min-drop-A} in terms of a growth condition on their tubular neighborhoods; in case of smooth inclusions  compactly contained in $ \Omega_h $, this growth condition naturally follows from an upper bound comparable to $ \frac{1}{h} $ on the norm of the shape tensor of $ \partial A $. (Indeed in Theorem~\ref{thm: minimal droplet assumption and curvature} in the appendix we investigate its validity for sets whose boundary is merely a varifold with bounded generalized second fundamental form.)

The limiting functional is defined on a product space $SBV^{2,2}_{\rm iso}(\omega) \times \mathcal{F}(\omega)$, where $SBV^{2,2}_{\rm iso}(\omega)$ is the space of `fractured and creased flat isometric immersions' and $\mathcal{F}(\omega)$ is the space of all sets of finite perimeter in $\omega$. A map $ r $ lies in $ SBV^{2,2}_{\rm iso}(\omega) $ if and only if $ r \in SBV^2(\omega, \mathbb{R}^3) $ and its approximate gradient $ \nabla r = (\partial_1 r, \partial_2 r) $ lies in $ SBV^2(\omega, \mathbb{R}^{3 \times 2})$ and satisfies the isometry condition $(\nabla r, \partial_1 r \wedge \partial_2 r ) \in SO(3) $. If $ r \in SBV^{2,2}_{\rm iso}(\omega) $ then $ r $ is a $ W^{2,2} $-isometric immersion on $ \omega \setminus \overline{(J_r \cup J_{\nabla r})} $ and consequently it is a ruled surface there (see \cite[Remark 1.2]{Pakzad}); however notice that the set $J_r \cup J_{\nabla r} $ might be dense in $ \omega $. Our limiting model thus takes the form
\begin{align*} 
	(r, D) 
	\mapsto \frac{1}{24}\int_{\omega \setminus D} Q_2(\II_r) \, \mathrm{d}x 
	+ 2 \int_{J_{(r,\nabla r)} \cap D^0} \psi_0 (\nu(J_{(r,\nabla r)})) \, \mathrm{d}\mathcal{H}^1
	+ \int_{\omega \cap \partial^* D} \psi_0 (\nu(D)) \, \mathrm{d}\mathcal{H}^1
\end{align*}
for deformations $r$ belonging to the space $SBV^{2,2}_{\rm iso}(\omega)$. 
Here $ \II_r $ is the second fundamental form of the immersion $ r $, which is computed from the approximate gradient $\nabla(\nabla r)$ of the approximate derivative of $\nabla r$. Moreover, $Q_2$ is a quadratic function derived from $D^2 W(\Id)$ and $\psi_0$ is an effective surface energy density derived from $\psi$. The elastic bending energy contribution in the bulk is given by the classical Kirchhoff plate theory in terms of a quadratic energy functional acting on the second fundamental form associated of the immersion $r$. 
There is also an obvious surface term measuring the length of the boundary $\partial^\ast D$. 
However, there are also surface terms within $\omega \setminus D$ which may arise from thin soft regions that separate parts of the matrix and whose volume vanishes asymptotically. 
Hence,  besides the limiting phase boundary $\partial^* D$, we also get surface contributions both from cracks and folds in $r(\omega \setminus D)$, corresponding to the jump sets $J_r$ and $J_{\nabla r}$ on $\omega \setminus D$, respectively, so that our limiting functional will be a Blake-Zisserman type functional, cp.\ \cite{BlakeZisserman:87,CarrieroMicheleLeaciTomarelli:96,CarrieroMicheleLeaciTomarelli:97} (also cf.\ the survey \cite{CarrieroMicheleLeaciTomarelli:15} and the references therein). 
We refer to the main result in Theorem \ref{theo:main} and to Section \ref{sec:models-results} for the precise statement and details.

We close this introduction with some comments on the technical challenges that have to be overcome. A crucial observation is that even though our minimal droplet assumption imposes only mild asymptotic regularity restrictions on the void sets, it is possible to partition the whole plate into many small cubes where the number of `bad cubes' that contain cracks is controlled. 
A geometric rigidity result then allows to conclude that on `good cubes' deformations are almost rigid. Yet, the presence of a void region impedes any compactness in Sobolev spaces and we therefore cannot proceed as in the elastic case (cp.\ \cite{FrieseckeJamesMueller:02}) in what follows. We overcome this difficulty by carefully putting together individual rigid motions on the cubes in order to construct a sequence of functions $(r_h) \subset SBV(\omega, \R^3)$ that--together with their approximate derivatives--converges to the limiting $r$ with controlled norms in $SBV$. The precise construction is, necessarily, rather involved as the envisioned energy bounds do not allow for (too much) artificial fracture as e.g.\ in a piecewise constant interpolation or (too high) artificial elastic energy as would result from mollifying within regions that will eventually be fractured.  

Furthermore there are two key steps to establish the lower bound estimates of our main theorem: For estimating the surface contribution we design an auxiliary functional which essentially tracks the surface energies along a given sequence and to which we apply a bulk relaxation result, cf.\ \cite{BraidesChambolleSolci:07,SantilliSchmidt:21}. For the bulk part the main difficulty lies in the identification of the limiting strain, more precisely, in showing that the limiting strain is asymptotically linear in the out-of-plane direction. We achieve this by considering a `flattened' plate deformation and applying an $SBV$-closure argument to such a mapping.    

For the construction of recovery sequences we first provide auxiliary explicit 3d approximations in which the deformation mappings may still be $SBV$ functions. In a secondary step these are then further approximated with the help of a bulk relaxation argument. The validity of a minimal droplet condition is then examined for arbitrary norms with the help of local estimates for the volume of tubular neighborhoods of the full crack set $ J_r \cup J_{\nabla r} \cup D $, see \eqref{eq: Minkowski content}, that amounts to require an outer Minkowski-content measurability condition (see comments and remarks after Theorem \ref{theo:main}).

\section*{Acknowledgments}

The second author was partially supported by the German Research Foundation (DFG)
within project 441138507.

\section{Notation and preliminaries}\label{sec:NotationPreliminaries}

If $ x \in \mathbb{R}^3 $ we denote its coordinates by $ x = (x_1, x_2, x_3) = (x', x_3) $, where $ x'= (x_1, x_2)  $. More generally, if $ X = (X_1 \, X_2\, X_3) $ is a $ (k \times 3) $-matrix, we write $ X' = (X_1 \, X_2) $ for the $ (k \times 2) $-matrix obtained by considering only the first two columns of $ X $. If $ v $ is a function defined on a (domain of) $ \mathbb{R}^3 $ with values in $ \mathbb{R}^k$ and $ \nabla v = (\partial_1v, \partial_2 v, \partial_3 v) $ is its differential, then we write $ \nabla' v = (\nabla v)' = (\partial_1 v, \partial_2 v) $.
	
The characteristic function of a set $ S $ is denoted by $ \rchi_S $ and the closure in $ \mathbb{R}^n $ of a set $ S  $ is denoted by $ \overline{S} $.
We use $ |\cdot | $ for the Euclidean norm and the scalar product is denoted by $ \langle \cdot, \cdot \rangle $. If $ A \subset \mathbb{R}^n $ and $ \psi $ is a norm on $\mathbb{R}^n$, we define the $ \psi $-anisotropic distance function from $ A $ by
\begin{align*}
	\dist_{\psi}(x,A) = \inf\{\psi(x-a): a \in A   \} \qquad \textrm{for $ x \in \mathbb{R}^n $}
\end{align*}
and we denote by $\psi^\circ $ the dual norm of $\psi$, i.e.
\begin{align*}
\psi^\circ(u) = \max\{ \langle u,v \rangle : \psi(v) \leq 1 \} \qquad \textrm{for $ u \in \mathbb{R}^3 $.}
\end{align*}

We say that a Borel subset $ S \subset \mathbb{R}^n $ is \emph{countably $ \mathcal{H}^{k} $-rectifiable} if there are at most countably many $C^1$ submanifolds of dimension $ k $ in $\Omega$ that cover $S$ up to an $\mathcal{H}^{k}$ negligible set. If moreover $ \mathcal{H}^{k}(S) < \infty $ then we say that $ S $ is \emph{$ \mathcal{H}^{k} $-rectifiable.} 

Suppose $ \Omega \subset \mathbb{R}^n $ is an open set and $ u \in L^1_{\rm loc}(\Omega, \mathbb{R}^m) $. We denote by $ S_u $ the \emph{approximate discontinuity set} of $ u $, see \cite[Definition 3.63]{AmbrosioFuscoPallara:00}. A point $ x \in \Omega $ is called \emph{approximate jump point} of $ u $ (see \cite[Definition 3.67]{AmbrosioFuscoPallara:00}) if there exist $ a, b \in \mathbb{R}^m $ and $ \nu \in \mathbb{S}^{n-1} $ such that $ a \neq b $ and 
\begin{equation*}
\lim_{\rho \searrow 0} \rho^{-n} \int_{B^+_\rho(x, \nu)}| u(y) -a| \, \mathrm d y = 0, \qquad 
\lim_{\rho \searrow 0} \rho^{-n} \int_{B^-_\rho(x, \nu)}| u(y) -b| \, \mathrm d y = 0.
\end{equation*}
Here $ B^+_\rho(x, \nu) = \{ y \in B_\rho(x) : \langle y-x, \nu \rangle >0   \} $ and $ B^-_\rho(x, \nu) = \{ y \in B_\rho(x) : \langle y-x, \nu \rangle <0   \} $. The triplet $(a,b , \nu) $ is uniquely determined up to a permutation of $(a,b) $ and a change of sign of $ \nu $. We denote it by $(u^+(x), u^-(x), \nu(u)(x)) $. Notice that $ J_u $ and $ S_u $ are always $ \mathcal{L}^n $-negligible subsets of $ \Omega $ and $ J_u \subset S_u $; see \cite[3.64, 3.69]{AmbrosioFuscoPallara:00}.


A function $u \in L^1(\Omega, \R^m)$ is said to lie in the space $BV(\Omega, \R^m)$ of functions of {\em bounded variation} if its distributional derivative $Du$ is a finite $\R^{m \times n}$-valued Radon measure. The total variation of $ u $ with respect to the Euclidean norm is denoted by $ |D u| $. We also need to consider \emph{the anisotropic total variation} $ \psi(Du) $ of $ D u  $ with respect to an arbitrary norm $ \psi $ for a function $ u \in BV(\Omega) $: this is the Radon measure $ \psi(D u) $ on $ \Omega $ given by 
\begin{equation*}
	\psi(D u)(B) = \int_{B} \psi \bigg( \frac{D u}{|D u|}\bigg)\, \mathrm{d}|D u| \quad \textrm{for  $ B \subset \Omega $ Borel,}
\end{equation*}
where $\frac{D u}{|D u|}$ is the $ |D u| $-measurable function satisfying $ D u = \frac{D u}{|D u|} |D u| $.

 The Federer-Volpert theorem (see \cite[Theorem 3.78]{AmbrosioFuscoPallara:00}) ensures that if $ u \in BV(\Omega, \mathbb{R}^m) $ then $ S_u $ is countably $ \mathcal{H}^{n-1} $-rectifiable, $ \mathcal{H}^{n-1}(S_u \setminus J_u) =0 $ and 
\begin{equation*}
	D u \restrict J_u = (u^+ - u^-) \otimes \nu_u \, \mathcal{H}^{n-1}\restrict J_u.
\end{equation*}
(Observe that $ | Du| (B) =0 $ if either $ \mathcal{H}^{n-1}(B) =0 $ or $ \mathcal{H}^{n-1}(B) < \infty $ and $ B \cap S_u = \emptyset $ by \cite[Lemma 3.76]{AmbrosioFuscoPallara:00}). Moreover the Calderon-Zygmund theorem (see \cite[Theorem 3.83]{AmbrosioFuscoPallara:00}) proves that $ u $ is approximately differentiable (see \cite[Definition 3.70]{AmbrosioFuscoPallara:00}) at $ \mathcal{L}^n $ a.e.\ $ x \in \Omega $ and its approximate differential $ \nabla u = (\partial_1 u, \ldots , \partial_n u) $ is the density of $ Du $ with respect to $ \mathcal{L}^n \restrict \Omega $. Denoting by $ D^s u $ the singular part of $ Du $ with respect to $ \mathcal{L}^n \restrict \Omega $, we have that 
\begin{equation*}
	Du = \nabla u \, \mathcal{L}^n \restrict \Omega + D^s u = \nabla u \, \mathcal{L}^n \restrict \Omega + (u^+ - u^-)\otimes \nu(u) \, \mathcal{H}^{n-1}\restrict J_u + D^c u,
\end{equation*}
where $ D^c u = D^s u \restrict (\Omega \setminus S_u) $. We say that $ u $ lies in the space $ SBV(\Omega, \mathbb{R}^m) $ of \emph{special functions of bounded variations} if and only $ D^c u =0 $. For $ p \geq 1 $ we write
\begin{equation*}
	SBV^p(\Omega, \mathbb{R}^m) = \{ u \in SBV(\Omega, \mathbb{R}^m) : \nabla u \in L^p(\Omega, \mathbb{R}^m) ~\textrm{and}~ \mathcal{H}^{n-1}(S_u) < \infty   \}.
\end{equation*}

If $E \subset \R^n$ is a Borel subset then we define for $ 0 \leq t \leq 1 $ the set
\begin{equation*}
	E^t = \{ x \in \R^n : \lim_{\rho \searrow 0} \rho^{-n} \mathcal{L}^n \big( B_{\rho}(x) \setminus E \big) =  t\} 
\end{equation*} 
and we set $ \partial^\ast E = \mathbb{R}^{n} \setminus (E^0 \cup E^1) $. The set $ \partial^\ast E $ is the \emph{essential boundary of $ E $.}  If $ \Omega\subset \mathbb{R}^n  $ is open then we say that \emph{$ E $ has finite perimeter in $ \Omega $} if and only if $ \mathcal{H}^{n-1}(\Omega \cap \partial^\ast E) < \infty $, or equivalently if and only if $ \rchi_E \in BV(\Omega) $ (notice that $ \chi_E \in SBV(\Omega) $); see \cite[3.5]{AmbrosioFuscoPallara:00} for details. If $ E $ has finite perimeter in $ \Omega $ we define $ \nu(E) : J_{\chi_E} \rightarrow \mathbb{S}^{n-1} $ by
$$ \nu(E) = -  \nu(\chi_E).$$
Notice that $ J_{\chi_E} \subset S_{\chi_E} = \Omega \cap \partial^\ast E $, $ \mathcal{H}^{n-1}(\Omega \cap \partial^\ast E \setminus J_{\chi_E}) =0 $ and $ \nu(E) $ is the \emph{measure-theoretic exterior unit-normal to $ E $}. We write $\mathcal{F}(\Omega)$ to denote the collection of sets $E \subset \Omega$ of finite perimeter in $\Omega$. Moreover if $ E \in \mathcal{F}(\Omega) $ and $ \psi $ is an arbitrary norm then we notice that
\begin{equation*}
	\psi(D \rchi_E)  =
	 \psi(\nu(E)) \mathcal{H}^{n-1} \restrict \partial^\ast E.
\end{equation*}	
and the following \emph{anisotropic coarea formula for $ BV $ functions} holds: if $ u \in BV(\Omega) $, then $ \{u > t\} $ has finite perimeter in $ \Omega $ for $ \mathcal{L}^1 $ a.e.\ $ t \in \mathbb{R} $ and
\begin{align}\label{eq: Coarea}
	\psi(D u)(B) = \int_{-\infty}^{+ \infty} \psi(D \rchi_{\{u > t    \}}) (B)\, \mathrm{d}t
\end{align}
for each Borel subset $ B \subset \Omega $ (see \cite[Theorem 3.40]{AmbrosioFuscoPallara:00} and \cite[page 5, eq.\ (1)]{SantilliSchmidt:21}).

\begin{remark}\label{rmK: coordinate changes SBV}
If $ \varphi : \Omega \rightarrow \Omega' $ is a bijective Lipschitz map with Lipschitz inverse and $ u \in SBV(\Omega', \mathbb{R}) $, then  $ u \circ \varphi \in SBV(\Omega, \mathbb{R}) $, $ S_{u \circ \varphi} = \varphi^{-1}(S_u) $ and the approximate gradient of $ u \circ \varphi $ satisfies the equation
\begin{equation}\label{eq:chain-rule-diffeo}
	\nabla (u \circ \varphi)(x) = \nabla u(\varphi(x)) \nabla \varphi(x) \qquad \textrm{for $ \mathcal{L}^n $ a.e.\ $ x \in \Omega $;}
\end{equation}
see \cite[exercise 4.5 at page 252]{AmbrosioFuscoPallara:00}. Additionally, one can check that $ J_{u \circ \varphi} = \varphi^{-1}(J_u) $ with
\begin{align}\label{eq:normal-diffeo}
	\nu(u \circ \varphi)(x) = \frac{\nabla \varphi(x)^T(\nu_u(\varphi(x)))}{|\nabla \varphi(x)^T(\nu_u(\varphi(x)))|} 
    \quad\text{and}\quad 
	(u \circ \varphi)^\pm(x) = u^\pm(\varphi(x))
\end{align}
for every $ x \in J_{u \circ \varphi} $.
\end{remark}

\section{Models and main results}\label{sec:models-results}

Let $ \omega \subset \mathbb{R}^2 $ be a bounded open domain with Lipschitz boundary. For $h > 0$ we define the reference configuration of a thin plate by 
\begin{equation*}
\Omega_h = \omega \times (0, h). 
\end{equation*}
We fix a stored energy density $ W : \mathbb{R}^{3\times 3} \rightarrow [0, +\infty) $ for the elastic matrix which satisfies the following standard set of assumptions: 
\begin{itemize} 
\item frame indifference: $W(RX) = W(X)$ for all $ X \in \mathbb{R}^{3 \times 3} $, $ R \in \SO(3)$, 
\item regularity: $W$ is $C^2$ in a neighborhood of $\SO(3)$, 
\item single well and growth: $W(\Id) = 0$ and there are constants $c, C > 0$ such that 
\begin{equation*}
  c \dist^2(X, \SO(3)) 
  \leq W(X) 
  \leq C (1 + |X|^2)
  \quad \text{for every $ X \in \mathbb{R}^{3 \times 3} $.}
\end{equation*}
\end{itemize}
We remark that, as a consequence, the quadratic form $Q_3$ on $\R^{3 \times 3}$ of linearized elasticity, given by $Q_3(X) = D^2 W(\Id)[X,X]$ satisfies the linear frame indifference relation 
\begin{equation*}
  Q_3(X+A)=Q_3(X)
  \quad\text{for all $ X, A \in \mathbb{R}^{3 \times 3} $ with $ A = - A^T $}. 
\end{equation*}
Finally we fix a norm $ \psi $ on $\mathbb{R}^3$ and we define the energy functionals 
\begin{equation*}
\mathcal{J}_h: W^{1,2}(\Omega_h, \mathbb{R}^3) \times \mathcal{F}(\Omega_h) \rightarrow \mathbb{R}
\end{equation*}
by
\begin{align}\label{eq:E-unrescaled}
	\mathcal{J}_h(v,A) = \int_{\Omega_h \setminus A} W(\nabla v) \, \mathrm{d}x +  h^{2} \int_{\Omega \cap \partial^\ast A} \psi(\nu(A))\, \mathrm{d}\mathcal{H}^{2}.
\end{align}

From now on we denote the tubular neighborhood of width $r > 0$ of a set $A \subset \mathbb{R}^3 $ with respect to the fixed norm $ \psi $ as
\begin{align*}
 A^{(r)} =  \{ x \in \mathbb{R}^3 : \operatorname{dist}_{\psi^\circ}(x, A) \le r \}.
\end{align*}
We say that a family $(A_{h})_{h \in (0,1)} $ such that $ A_{h} \in \mathcal{F}(\Omega_{h}) $ satisfies \emph{the $ \psi $-minimal droplet assumption in $\Omega_h $} if and only if there exists an increasing positive function $\zeta: (0,2) \rightarrow \mathbb{R}$ with $ \lim_{t \to 0+} \zeta(t) =0 $ such that
\begin{align}\label{eq:min-drop-A}
   \mathcal{L}^3 \big( (A_h^{(sh)} \setminus A_h) \cap  \Omega_h^- \big) 
  &\le (1 +  \zeta(h + s)) s h \int_{\Omega_h \cap \partial^\ast A_h} \psi( \nu(A_h) ) \, \mathrm d\mathcal{H}^2  + \zeta(h + s) s h^2
\end{align}
for all $s \in (0,1)$,  where $\Omega^-_h = \{x \in \Omega_h : \dist_{|\cdot|}(x', \partial \omega) > h\}$. 

\begin{remark}
We remark that this condition can be interpreted as a (diverging) bound on the curvature of boundary of the sets. Indeed, in the isotropic setting for a sequence of smooth open sets $A_h \subset \Omega_h $ satisfying $ \overline{A_h} \subset \Omega_h $, the minimal droplet assumption \eqref{eq:min-drop-A} holds if the second fundamental form $ \II_{\partial A_h} $ of $ \partial A_h $ satisfies $ | \II_{\partial A_h}(x) | \leq Ch^{-1} $ for every $ x \in \partial A_h $. 

The smoothness assumption is not necessary; indeed in the appendix we prove the minimal droplet assumption (in the Euclidean setting) for sets of finite perimeter $ A \in \mathcal{F}(\Omega_h) $ such that the first variation in $ \Omega_h $ (in the sense of varifold's theory) of the essential boundary $ \partial^\ast A $ is absolutely continuous with respect to $ \mathcal{H}^n \restrict \partial^\ast A $ and the generalized second fundamental form is controlled by $  \frac{1}{h} $.
\end{remark}

\begin{remark}
If $ \psi $ is a uniformly convex $ C^2 $-norm on $ \mathbb{R}^3 $, employing the classical notions of $ \psi $-anisotropic mean curvature $ H^\psi_M $  and $ \psi $-anisotropic Gaussian curvature $ K^\psi_M $ for a $ C^2 $-hypersurface $ M $ (see for instance \cite[Definition 2.26]{DeRosaKolasinskiSantilli}), it is still true (and this can be proved employing the anisotropic disintegration formula in \cite[Theorem 3.18]{HugSantilli}) that the $ \psi $-minimal droplet assumption holds for a sequence of smooth open sets $ A_h \subset \Omega_h $ satisfying $ \overline{A_h} \subset \Omega_h $ and the bounds
	\begin{equation*}
	|H^\psi_{\partial A_h}(x)| \leq \frac{C}{h} \quad \textrm{and} \quad  |K^\psi_{\partial A_h}(x)| \leq \frac{C}{h^2} \quad \textrm{for $ x \in \partial A_h $.}
	\end{equation*}
\end{remark}

In order to formulate the problem in the fixed domain $ \Omega := \Omega_1 $ we change coordinates according to $y(x) = v(x', hx_3)$ for $ x \in \Omega $, $D_h = \operatorname{diag}(1,1,h^{-1}) A_h$. We also define the rescaled gradient operator $ \nabla_h $ as
\begin{equation*}
	\nabla_h y = (\nabla' y, h^{-1} \partial_3 y)
\end{equation*}
and the rescaled exterior unit-normal $ \nu_h $ as
\begin{equation*}
\nu_h(A) = (\nu'(D), h^{-1} \nu_3(D)).
\end{equation*}

We denote the rescaled tubular neighborhoods by $D^{(r)_h} = \operatorname{diag}(1,1,h^{-1}) A^{(r)}$ and say that $(D_h)_{h>0} \subset \mathcal{F}(\Omega)$ satisfies the (rescaled) minimal droplet assumtion if $A$ satisfies \eqref{eq:min-drop-A}, i.e., if 
\begin{align}\label{eq:min-drop-D}
  \mathcal{L}^3 \big( (D_h^{(sh)_h} \setminus D_h) \cap \Omega^- \big) 
  \le (1 + \zeta(h+s)) sh \int_{\partial^* D_h \cap \Omega} \psi(\nu_h(D_h)) \, \mathrm d\mathcal{H}^2 + \zeta(h+s) s h,
\end{align}
where $\Omega^- = \{x \in \Omega_1 : \dist_{|\cdot|}(x', \partial \omega) > h\}$. We consider the rescaled functionals $$\mathcal{E}_{h} : W^{1,2}(\Omega, \mathbb{R}^3) \times \mathcal{F}(\Omega) \rightarrow  \mathbb{R}$$ defined by
\begin{align*}
	\mathcal{E}_{h}(y,D) = h^{-2} \int_{\Omega \setminus D} W(\nabla_h y) \, \mathrm{d}x  
   + \int_{\Omega \cap \partial^* D} \psi (\nu_h(D)) \, \mathrm{d}\mathcal{H}^2.  
\end{align*}
 We extend these functionals to $L^1(\Omega, \mathbb{R}^3) \times \mathcal{F}(\Omega)$ by setting 
$$\mathcal{E}_{h}(y,D) = +\infty \qquad \textrm{if $(y, D) \notin W^{1,2}(\Omega, \mathbb{R}^3) \times \mathcal{F}(\Omega)$}.$$

With a slight abuse of notation, in the following we do not distinguish between functions defined on $\omega$ and functions on $\Omega$ that do not depend on $x_3$. In particular, we make the identifications 
\begin{align*}
   SBV^2(\omega, \mathbb{R}^N) 
   &= \big\{ u \in SBV^2(\Omega, \mathbb{R}^N): \partial_3 u =0, \; \nu_3(u)  =0 \big\}, \\ 
   \mathcal{F}(\omega) 
   &= \big\{ D \in \mathcal{F}(\Omega): \nu_3(D) =0 \big\}.
\end{align*}
Our limiting deformations $r$ turn out to be isometric immersions away from the jump set $J_{(r,\nabla r)} = J_r \cup J_{\nabla r}$. To describe them we introduce the space 
\begin{align*}
  SBV^{2,2}_{\rm iso}(\omega)  
  &= \big\{ r \in SBV^2(\omega, \mathbb{R}^3) \cap L^{\infty}(\omega, \mathbb{R}^3) : \\ 
  &\qquad \nabla r \in SBV^2(\omega, \mathbb{R}^{3 \times 2}),(\nabla r, \partial_1 r \wedge \partial _2 r )\in \SO(3) \text{ a.e.} \big\} 
\end{align*}
To a mapping $r \in SBV^{2,2}_{\rm iso}(\omega)$ we associate its second fundamental form as being given by 
$$ \II(x') 
   = - \big( \partial_{ij} r \cdot (\partial_1 r \wedge \partial _2 r) \big)_{1 \le i,j \le 2}. $$ 

We also introduce the condition 
\begin{align}\label{eq: Minkowski content}
	\begin{split}
		&2 \int_{J_{(r,\nabla r)} \cap D^0} \psi_0 (\nu(J_{(r,\nabla r)})) \, \mathrm{d}\mathcal{H}^1
		+ \int_{\omega \cap \partial^* D} \psi_0 (\nu(D)) \, \mathrm{d}\mathcal{H}^1 \\ 
		&~~= \liminf_{t \to 0} t^{-1} \mathcal{L}^2 \big( \omega \cap \{ x \in \mathbb{R}^2 : \operatorname{dist}_{\psi_0^\circ}(x, J_{(r,\nabla r)} \cup D) \le t \} \setminus D \big),
	\end{split}
\end{align}
for $(r, D) \in SBV^{2,2}_{\rm iso}(\omega) \times \mathcal{F}(\omega)$. It can be interpreted as an `outer Minkowski-content measurability' property of the set $J_{(r,\nabla r)} \cup D$. In particular this is satisfied provided $ \mathcal{H}^1\big(\partial \omega \cap \partial(J_{(r,\nabla r)} \cup D)  \big) =0 $, $\partial(J_{(r,\nabla r)} \cup D)$ is $\mathcal{H}^1$-rectifiable and 
\begin{align*}
	\mu(B_r(x)) \ge \gamma r 
	\quad \forall\, x \in \partial(J_{(r,\nabla r)} \cup D)~ \forall r \in (0,1) 
\end{align*}
for a constant $\gamma > 0$ and a finite measure $\mu$ on $\R^2$ with $\mu \ll \mathcal{H}^1$, cp.\ Theorem~\ref{theo:Minkowski-density-lb}. 

Our main result is the following $\Gamma$-convergence type result towards a limiting `Blake-Zisserman-Kirchhoff' plate functional 
\begin{align*}
  \mathcal{E}(r, D) 
  &= \frac{1}{24}\int_{\omega \setminus D} Q_2(\II) \, \mathrm{d}x' \\ 
   &\quad + 2 \int_{J_{(r,\nabla r)} \cap D^0} \psi_0 (\nu(J_{(r,\nabla r)})) \, \mathrm{d}\mathcal{H}^1
   + \int_{\omega \cap \partial^* D} \psi_0 (\nu(D)) \, \mathrm{d}\mathcal{H}^1
\end{align*}
for $(r, D) \in SBV^{2,2}_{\rm iso}(\omega) \times \mathcal{F}(\omega)$ and $= + \infty$ elsewhere on $L^1(\Omega, \mathbb{R}^3) \times \mathcal{F}(\Omega)$. Here the relaxed quadratic form $Q_2$ on $\R^{2\times 2}$ and relaxed norm $\psi_0$ on $\R^2$ are given by 
\begin{align*}
 Q_2(X) 
   &= \min_{c \in \R^3} \begin{pmatrix}[1.1] 
      X & \vline & c' \\ 
     \hline 
     0 & \vline & c_3   
     \end{pmatrix}
    = \min_{c \in \R^3} \begin{pmatrix}[1.1] 
      X & \vline & c' \\ 
     \hline 
     (c')^T & \vline & c_3   
     \end{pmatrix} 
\shortintertext{and} 
 \psi_0(x_1,x_2) 
   &= \min_{c \in \R} \psi(x_1,x_2,c).
\end{align*}

The factor 2 multiplying the jump part arises from cracks outside of $D$ can develop as a result of asymptotically thin tubular neighborhoods of $J_{(r,\nabla r)}$ whose boundary is asymptotically twice as big as the surface area of $J_{(r,\nabla r)} \cap D^0$.

\begin{theorem}\label{theo:main} \begin{itemize}
\item[(i)] Let $(r, D) \subset L^\infty(\Omega, \mathbb{R}^3) \times \mathcal{F}(\Omega)$. A sequence $(y_h, D_h) \subset L^\infty(\Omega, \mathbb{R}^3) \times \mathcal{F}(\Omega)$  verifies  
\begin{align*}
  \liminf_{h \to 0} \mathcal{E}_{h}(y_h, D_h) 
  \ge \mathcal{E}(r, D), 
\end{align*}
provided $ y_h \to r$ in $L^1(\Omega, \mathbb{R}^3)$, $\rchi_{D_h} \to \rchi_D$ in $L^1(\Omega, \mathbb{R})$ with $\limsup_{h} \| y_h \|_{L^\infty} < \infty$ and $ (D_h)_{0 < h < 1} $ satisfying the $ \psi $-minimal droplet assumption. 
\item [(ii)] For any $(r, D) \subset L^\infty(\Omega, \mathbb{R}^3) \times \mathcal{F}(\Omega)$ there exists a sequence $(y_h, D_h) \subset L^\infty(\Omega, \mathbb{R}^3) \times \mathcal{F}(\Omega)$ with $y_h \to r$ in $L^1(\Omega, \mathbb{R}^3)$ and $\rchi_{D_h} \to \rchi_D$ in $L^1(\Omega, \mathbb{R})$ such that 
\begin{align*}
  \lim_{h \to 0} \mathcal{E}_{h}(y_h, D_h) 
  = \mathcal{E}(r, D). 
\end{align*}
Moreover, there is a universal constant $c_0$ such that $\| y_h \|_{L^\infty} \le \| y \|_{L^\infty} + c_0 h$. 
There is also a universal error function $\zeta_0$ such that for any $(r, D) \in SBV^{2,2}_{\rm iso}(\omega) \times \mathcal{F}(\omega)$ that satisfy \eqref{eq: Minkowski content} the sets $D_h$ can be chosen such that $(D_h)_{0 < h < 1}$ satisfies the $ \psi $-minimal droplet assumption with $ \zeta_0 $.
\end{itemize}
\end{theorem}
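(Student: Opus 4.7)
The plan is to prove parts (i) and (ii) separately, adapting the dimension-reduction template of \cite{FrieseckeJamesMueller:02} to the two-phase setting with cracks. For the liminf inequality, fix a sequence $(y_h, D_h)$ with bounded energy. The first step is to decompose $\omega$ into a grid of cubes of side length comparable to $h$: the bound on $\int_{\Omega \cap \partial^\ast D_h} \psi(\nu_h(D_h))\,\mathrm d\mathcal{H}^2$ together with \eqref{eq:min-drop-D} restricts the number of ``bad'' cubes whose rescaled slab meets a non-negligible part of $\partial^\ast D_h$, while on the remaining ``good'' cubes the Friesecke--James--M\"uller geometric rigidity estimate, applied to the intact part of each slab, produces constants $R_{h,\alpha}\in \SO(3)$ approximating $\nabla_h y_h$ in $L^2$. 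The core technical step is then to glue these cube-wise rotations into a pair $(r_h, R_h)\in SBV^2(\omega, \R^3)\times SBV^2(\omega, \R^{3\times 3})$, essentially satisfying $R_h = \nabla r_h$ on the good region, with a uniform bound on $\mathcal{H}^1(J_{r_h}\cup J_{R_h})$ charged to the surface energy and $(r_h, R_h)\to (r, \nabla r)$ in $L^1$. As emphasised in the introduction, neither a piecewise-constant patchwork (which would generate spurious jumps on every cube face) nor a global mollification (which would generate spurious elastic energy across the void) works in isolation; one must interpolate rotations smoothly across good-to-good interfaces while admitting jumps only near bad cubes. Standard $SBV^2$ compactness then yields a limit $r\in SBV^{2,2}_{\rm iso}(\omega)$.

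With this compactness in hand, the two limiting energies are identified separately. For the bulk term I would introduce a ``flattened'' rescaled strain, built from $R_h^T y_h$ in the spirit of \cite{FrieseckeJamesMueller:02}, and apply an $SBV$-closure argument to show that it converges to a limit that is affine in $x_3$ with the Hessian-type part controlled by $\II$; Taylor-expanding $W$ about $\SO(3)$ and minimising pointwise over the out-of-plane corrector converts $Q_3$ into $Q_2$ and delivers the Kirchhoff term $\tfrac{1}{24}\int_{\omega\setminus D} Q_2(\II)\,\mathrm dx'$. For the surface contribution I would design an auxiliary functional on $\Omega$ that tracks the $\psi$-anisotropic surface area of $\partial^\ast D_h$ together with the projected contributions of the forming crack set, and apply the bulk relaxation result of \cite{BraidesChambolleSolci:07, SantilliSchmidt:21} to pass to the liminf; the factor $2$ in front of the $J_{(r,\nabla r)}\cap D^0$ integral emerges because a thin tube of $D_h$ around a nascent crack has boundary area asymptotically twice the length of the crack.

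The recovery sequence in part (ii) is built in two stages. First, for $(r, D)$ with enough regularity that the jump set $J_{(r,\nabla r)}$ is polyhedral and $r$ is piecewise $W^{2,2}$-isometric, I would use the explicit Kirchhoff ansatz $y_h(x) = r(x') + h(x_3 - \tfrac12)(\partial_1 r\wedge\partial_2 r)(x') + h^2 c_h(x)$ on the matrix, continued across $J_{(r,\nabla r)}$ by one-sided traces so that $y_h\in SBV$, with $c_h$ chosen to realise the pointwise minimum defining $Q_2(\II)$. Taking $D_h$ to be $D$ enlarged by a tube of radius $h/2$ (in $\psi_0^\circ$) around $J_{(r,\nabla r)}$ produces a surface energy that, thanks to \eqref{eq: Minkowski content}, matches the limit, and the explicit tubular form of $D_h$ delivers the $\psi$-minimal droplet assumption with a universal $\zeta_0$. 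A second, density/relaxation stage via the bulk relaxation of \cite{BraidesChambolleSolci:07, SantilliSchmidt:21}, followed by a diagonal extraction, removes the regularity hypothesis. The main obstacle throughout is the $SBV$ reconstruction $(r_h, R_h)$ in the liminf step: balancing the competing demands of avoiding spurious fracture on cube-face interfaces, avoiding spurious elasticity inside genuinely broken regions, and keeping $\mathcal{H}^1(J_{r_h}\cup J_{R_h})$ bounded by the surface energy, all simultaneously, is the delicate point on which the entire subsequent bulk identification rests.
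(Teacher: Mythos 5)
Your overall architecture coincides with the paper's: cube decomposition with bad-cube counting from the minimal droplet assumption, geometric rigidity plus an $SBV$ interpolation of cube-wise rigid motions, $SBV$ compactness, identification of the bulk term via a flattened strain and an $SBV$-closure argument, the surface liminf via an auxiliary functional fed into the bulk relaxation theorem, and a two-stage recovery sequence (explicit ansatz, then relaxation and a diagonal argument). Up to the level of detail you give, part (i) is the paper's proof.

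There is, however, a genuine gap in your recovery sequence: your ansatz $y_h(x) = r(x') + h(x_3-\tfrac12)(\partial_1 r\wedge\partial_2 r)(x') + h^2 c_h$ together with a cylindrical void set $D_h$ produces crack and void surfaces whose (rescaled) normals are horizontal, $\nu_h = (\nu',0)$, so the surface energy you obtain is $\int \psi(\nu',0)$ rather than the relaxed density $\int \psi_0(\nu') = \int \min_{c}\psi(\nu',c)$ appearing in $\mathcal{E}(r,D)$. The second-stage relaxation of \cite{BraidesChambolleSolci:07,SantilliSchmidt:21} does not repair this, since it leaves the anisotropy on $\partial^* D$ untouched. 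The missing idea is the shear map $f_h(x) = (x' - h(x_3-\tfrac12)\varphi(x'), x_3)$ of \eqref{eq: f_h}: composing the ansatz and the void set with $f_h^{-1}$ tilts all interfaces by an angle of order $h$, so that after rescaling the normal acquires the out-of-plane component $-\varphi\cdot\nu'$, which is then chosen to approximate the minimizer $\bar\nu_3$ in the definition of $\psi_0$. (For isotropic $\psi$ the minimum is attained at $c=0$ and your construction would suffice, but the theorem is stated for arbitrary norms.) A secondary, fixable issue: condition \eqref{eq: Minkowski content} is only a $\liminf$, so a tube of the fixed radius $h/2$ around $J_{(r,\nabla r)}$ need not have controlled perimeter for every $h$; the paper instead selects, via the coarea formula, a good radius $t$ independent of $h$ (then smooths the level set and diagonalizes $t=t_h\to 0$ slowly at the very end), which is also what makes the curvature of $\partial D_h$ bounded independently of $h$ and hence the minimal droplet verification via the Steiner-type inequality \eqref{eq: Steiner inequality} go through with a universal $\zeta_0$.
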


As a consequence to our compactness Theorem \ref{Compactness} below we also have that any sequence $(y_h, D_h) \subset L^\infty(\Omega, \mathbb{R}^3) \times \mathcal{F}(\Omega)$ satisfying \eqref{eq:min-drop-D} and 
\begin{align*}
  \limsup_{h \to 0} \big( \mathcal{E}_{h}(y_h, D_h) + \| y_h \|_{L^\infty} \big) 
  < \infty. 
\end{align*}
has a subsequence (not relabeled) such that $\chi_{D_h} \to \chi_{D}$ in $L^1(\Omega)$ and $\chi_{\Omega \setminus D_h} y_h \to \chi_{\omega \setminus D}r$ in $L^1(\Omega,\R^3)$. This complements the convergence Theorem~\ref{theo:main} as $\mathcal{E}_{h}(y_h, D_h)$ and $\mathcal{E}(r, D)$ indeed only depend on $(\chi_{D_h} y_h, D_h)$ and $(\chi_{D} r, D)$, respectively. 

\begin{remark}
\begin{enumerate}
\item The constant $c_0$ and error function $\zeta_0$ are universal in the sense that they only depend on the norm $\psi$. In particular, they are independent of the specific limiting configuration $(r,D)$. Our working with $L^\infty$ bounded deformations can thus be justified by assuming the plate to be restricted to a bounded laboratory by requiring $\| y_h \| \le M + c_0 h$, $M > 0$ a given constant. This constraint can be energetically enforced by (re-)defining $\mathcal{E}_{h}(y, D) = \infty$ if $\| y \| > M + c_0 h$ and $\mathcal{E}(r, D) = \infty$ if $\| r \| > M$. Likewise the minimal droplet assumption \eqref{eq:min-drop-D} can be installed energetically by moreover requiring that $\mathcal{E}_{h}(y, D) = \infty$ if \eqref{eq:min-drop-D} is violated for $\zeta_0$.

\item Instead of surface energies scaling with $h^2$ in \eqref{eq:E-unrescaled} one may look into more general scalings $h^{\alpha}$. For $\alpha < 2$ one obtains a purely elastic Kirchhoff plate theory in the limit $h \to 0$ as then our results show that bounded energy sequences cannot develop nontrivial voids, cracks or folds in the limit. Conversely, if $\alpha < 2$ one would arrive at a trivial theory as there may be infinitely long cracks at zero energy in the limit. 

In order to arrive at an interesting limit functional in this case, one is lead to rescale the bulk part by $h^{-\alpha-1}$ insead of $h^{-3}$ and to consider a different plate theory than Kirchhoff's theory, cf.\ \cite{FrieseckeJamesMueller:06}.

\item We remark that Theorem~\ref{theo:main} raises two interesting questions which are beyond the scope of this contribution: First, one might wonder if the minimal droplet assumption \eqref{eq:min-drop-D} in (i) and in the Compactness Theorem~\ref{Compactness} can be dropped. In view of recent results in \cite{FriedrichKreutzZemas:21} it seems plausible that this condition might be further weakened. Yet, in order to understand if such a condition could be dropped completely, as in the two-dimensional seeting \cite{Schmidt:17}, would require an extension of the quantitative rigidity results in \cite{FriedrichSchmidt:15} to the three-dimensional setting which appears highly non-trivial. Second, it would be interesting to investigate if for general $(r, D) \in  SBV^{2,2}_{\rm iso}(\omega) \times \mathcal{F}(\omega)$ recovery sequences $(y_h, D_h)$ with $D_h$ obeying the minimal droplet property \eqref{eq:min-drop-D} can be constructed. This would follow from Theorem ~\ref{theo:main} if one knew that the space of configurations $(r,D)$ satisfying \eqref{eq: Minkowski content} is dense in $SBV^{2,2}_{\rm iso}(\omega) \times \mathcal{F}(\omega)$. 
\end{enumerate}
\end{remark}

\section{Compactness}

Our first aim is to prove compactness of bounded energy sequences $(y_h, D_h)$. To this end, we consider a modification $D'_h \supset D_h$ of $D_h$ which is obtained from a suitable covering by cubes and which satisfies $\mathcal{L}^3(D'_h \setminus D_h) \le Ch$. 

Observe that, if $(A_h)$ satisfies the minimal droplet assumption, for each given $s \in (0, \frac{1}{3})$ we may choose $s_h = s_h(A_h) \in (s^2, s)$ and set $\sigma_h = s_h h$ such that 

\begin{align}\label{eq:A3sh-surf-est}
  \int_{\Omega_{h}^- \cap \partial^* A_h^{(3\sigma_h)}} \psi \big( \nu(A_h^{(3\sigma_h)}) \big) \, \mathrm d\mathcal{H}^2 
  \le \frac{1 + \zeta(h + 3s)}{1 - s} \int_{\partial^* A \cap \Omega_{h}} \psi( \nu(A_h) ) \, \mathrm d\mathcal{H}^2 
  + \frac{\zeta(h + 3s)}{1 - s}.  
\end{align}
To see this note that 
\begin{align*}
  &3 s h \zeta(h + 3s) + 3 s h (1 + \zeta(h + 3s)) \psi\big(D \chi_{A_h}\big)(\Omega_h) \\ 
  &\quad\geq \mathcal{L}^3 \big( (A_h^{(3sh)} \setminus A_h) \cap \Omega_h^- \big) \\ 
  &\quad= \int_{(A_h^{(3sh)} \setminus {A_h}) \cap \Omega_h^-}\psi \big( \nabla \operatorname{dist}_{\psi^\circ}(\cdot, A_h) \big)\, \mathrm{d}x \\
  &\quad= \int_{0}^{3sh}  \psi\big(D \chi_{A_h^{(t)}}\big)(\Omega_h^-)\,\mathrm{d}t\\
  &\quad\ge h \int_{3s^2}^{3s} \psi\big(D \chi_{A_h^{(th)}}\big)(\Omega_h^-)\,\mathrm{d}t \\ 
  &\quad\ge 3 s h (1 - s) \psi\big(D \chi_{A_h^{(3s_h h)}}\big)(\Omega_h^-) 
\end{align*}
for some $s_h$ such that $3s_h \in (3s^2, 3s)$. 

We can now define our covering cubes. Given $(v_h, A_h)$, respectively their rescalings $(y_h, D_h)$, and fixing an $s \in (0, 1)$ we classify the elements of $\{ a' \in h \Z^2 : ( a' + (0,h)^2 ) \cap \omega \ne \emptyset \}$ and corresponding cubes $Q_h(a') = (a',0) + (0,h)^3$ and rescaled cuboids $\hat{Q}_{h}(a') = (a',0) + (0,h)^2 \times (0,1)$ based at these points into the following types: 
\begin{itemize}
\item If $Q_h(a') \subset \Omega_h$ and $\overline{Q_h(a')} \cap \partial A_h^{(\sigma_h)} = \emptyset$, then we write $a' \in \mathcal{G}_{h}$ and say that the cuboids $Q_h(a')$ and $\hat{Q}_h(a')$ are \emph{good}. We also set $\mathcal{G}_{h}^{\rm v} = \{a' \in \mathcal{G}_{h} : Q_h(a') \subset A_h^{(\sigma_h)} \}$, $\mathcal{G}_{h}^{\rm el} = \mathcal{G}_{h} \setminus \mathcal{G}_{h}^{\rm v}$. 
\item In case $Q_h(a') \subset \Omega_h$ and $\overline{Q_h(a')} \cap \partial A_h^{(\sigma_h)} \ne \emptyset$ we say that $Q_h(a')$ and $\hat{Q}_h(a')$ are interior bad cuboids and write $a' \in \mathcal{B}_{h}^{\rm in}$. 
\item If $a' \in \mathcal{B}_{h}^{\rm in}$ or $Q(a') \cap \partial \Omega_h \ne \emptyset$, we write $a' \in \mathcal{B}_{h}$ and call $Q_h(a')$ and $\hat{Q}_h(a')$ bad cuboids. 
\end{itemize}
We then set 
\begin{align*}
  A_h' 
   = \operatorname{diag}(1,1,h) D_h'
   = \bigcup_{a' \in \mathcal{G}_h^{\rm v} \cup \mathcal{B}_h} \overline{Q_h(a')}.  
\end{align*}

\begin{theorem}\label{Compactness}
Suppose $(y_h, D_h)$ is a sequence in $W^{1,2}(\Omega, \mathbb{R}^3)  \times \mathcal{F}(\Omega)$ with 
\begin{align}
  \limsup_{h \to 0} \big( \mathcal{E}_{h}(y_h, D_h) + \| y_h \|_{L^\infty} \big) 
  < \infty \label{eq:E-y-bound}
\end{align}
and $ (D_h)_{0 < h < 1} $ satisfying the $ \psi $-minimal droplet assumption. Then there exists $ (r, D) \in SBV^{2,2}_{\rm iso}(\omega) \times \mathcal{F}(\omega) $ such that, up to subsequences,
\begin{align} 
\label{Compactness:2}
\mathcal{L}^3(D_h' \triangle D_h) 
&\rightarrow  0, \\
\label{Compactness:3}
\chi_{\Omega \setminus D_h'} y_h 
&\rightarrow \chi_{\omega \setminus D}r \quad \textrm{in $\Lp{2}$}, \\
\label{Compactness:4}
\chi_{\Omega \setminus D_h'} \nabla_h y_h 
&\rightarrow \chi_{\omega \setminus D} ( \nabla' r, \partial_1 r \wedge \partial_2 r ) \quad \textrm{in $\Lp{2}$}.
\end{align}
\end{theorem}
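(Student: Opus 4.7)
The plan is to split the argument into three parts: (a) convergence of the void sets $D_h$, (b) a cube-by-cube geometric rigidity argument on the good elastic cubes, and (c) an $SBV$-gluing construction producing an auxiliary sequence $(r_h)$ from which the limit $r$ is extracted.

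For (a), the energy bound $\mathcal{E}_{h}(y_h, D_h) \le C$ translates into $\psi(D\chi_{A_h})(\Omega_h) \le Ch$, and then \eqref{eq:A3sh-surf-est} gives the same estimate for $A_h^{(\sigma_h)}$. Hence the number of interior bad cubes, being controlled by the Minkowski content of $\partial A_h^{(\sigma_h)}$ divided by $h^2$, is at most $C/h$; the number of boundary cubes has the same bound. Converting volumes to the rescaled scale and applying the minimal droplet assumption once more to estimate $|A_h^{(\sigma_h)} \setminus A_h|$, we obtain $\mathcal{L}^3(D_h' \triangle D_h) \le C(h + s_h) \to 0$ along a diagonal choice $s_h \to 0$, giving \eqref{Compactness:2}. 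In rescaled coordinates the surface energy penalises the $x_3$-component of the normal with an extra factor $1/h$, so $\int_\Omega |\nu_3(D_h)|\, \mathrm d\mathcal{H}^2 \le Ch$; any $L^1$-limit $\chi_D$ extracted by $BV$-compactness therefore satisfies $\partial_3 \chi_D = 0$, forcing $D \in \mathcal{F}(\omega)$.

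For (b), each cube $\hat Q_h(a')$ with $a' \in \mathcal{G}_h^{\rm el}$ is disjoint from $A_h^{(\sigma_h)}$, so $y_h \in W^{1,2}(\hat Q_h(a'), \R^3)$ and, after rescaling to the unit cube, the Friesecke--James--M\"uller geometric rigidity estimate produces $R_h(a') \in \SO(3)$ with
\begin{equation*}
\int_{\hat Q_h(a')} |\nabla_h y_h - R_h(a')|^2 \, \mathrm d x \le C h^2 E_h(a'),
\end{equation*}
where $E_h(a')$ denotes the elastic contribution on that cube. Summing over $a' \in \mathcal{G}_h^{\rm el}$ yields an $L^2$-bound of order $h^2$ for $\nabla_h y_h - R_h$ on the union of good elastic cubes, where $R_h$ is the piecewise constant rotation field. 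Rigidity applied on pairs of adjacent good elastic cubes controls $|R_h(a') - R_h(a'')|^2$ by $E_h(a') + E_h(a'')$, yielding a discrete $H^1$-estimate for $a' \mapsto R_h(a')$.

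The hardest step is (c): constructing $r_h \in SBV^2(\omega, \R^3)$ with $\nabla r_h \in SBV^2(\omega, \R^{3\times 2})$ whose approximate gradient approximates $R_h$ and whose jump sets are $\mathcal{H}^1$-bounded by the surface energy. Mollification is ruled out across the void region, and a naive piecewise-constant choice would produce artificial jumps across every pair of adjacent good cubes in violation of the surface bound. Instead I would take $r_h$ on $\bigcup_{a' \in \mathcal{G}_h^{\rm el}} \hat Q_h(a')$ to be the $x_3$-average of $y_h$, smoothly interpolated across internal faces between two good elastic cubes by a partition of unity whose Lipschitz cost is absorbed by the pairwise rigidity estimate in (b), while allowing jumps only along the in-plane boundary of the cylindrical set $D'_h$, whose perimeter in $\omega$ is controlled by the surface energy of $A_h^{(\sigma_h)}$. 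With this construction $\mathcal{H}^1(J_{r_h} \cup J_{\nabla r_h}) \le C$, $\nabla r_h$ is close in $L^2$ to the in-plane part of $R_h$, and the approximate derivative of $\nabla r_h$ is bounded in $L^2$ by the bending contribution of the elastic energy. Ambrosio's $SBV^2$-compactness applied successively to $(r_h)$ and $(\nabla r_h)$ extracts a limit $(r, \nabla r)$ with $r \in SBV^{2,2}_{\rm iso}(\omega)$; the isometry $(\nabla r, \partial_1 r \wedge \partial_2 r) \in \SO(3)$ passes to the limit from $R_h \in \SO(3)$ via $L^2$-convergence on $\omega \setminus D$. Finally the uniform $L^\infty$-bound on $(y_h)$ upgrades $L^1$- to $L^2$-convergence in \eqref{Compactness:3} and \eqref{Compactness:4}.
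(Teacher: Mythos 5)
Your parts (a) and (b) follow the paper's proof closely: the bad-cube count via the minimal droplet assumption (each bad cube forces a volume $\ge c h^3$ of $A_h^{(\sigma_h)}\setminus A_h$ in a cylinder of bounded overlap, whence $\#\mathcal{B}_h\le Ch^{-1}$), the resulting bound $\mathcal{L}^3(D_h'\triangle D_h)\le Ch$, and the cube-wise and pairwise Friesecke--James--M\"uller rigidity estimates are exactly the ingredients used there. The difficulty is concentrated in your step (c), and there is a genuine gap in it. If $r_h$ is the partition-of-unity interpolation of the $x_3$-averages of $y_h$ on the good elastic cubes, then $\nabla r_h$ involves the $x_3$-average of $\nabla' y_h$, and its approximate derivative would require control of second derivatives of $y_h$; the energy only bounds $\dist(\nabla_h y_h,\SO(3))$ in $L^2$, so the claim that ``the approximate derivative of $\nabla r_h$ is bounded in $L^2$ by the bending contribution'' does not follow from your construction, and $SBV^2$-compactness cannot be applied to $(\nabla r_h)$. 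The same obstruction persists even if you interpolate the affine rigid motions $r_{h,a'}$ instead of averages of $y_h$: then $\nabla \tilde r_h = \chi_{\omega_h}(G_h+\tilde R_h')+\dots$ contains the cross term $G_h=\sum r_{h,a'}\otimes\nabla\psi_{h,a'}$, which is only $L^2$-bounded, not an $SBV$ function with controlled jump set and gradient.

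The paper's resolution, which your proposal is missing, is to carry a \emph{second, independent} interpolated field $\tilde R_h=\sum_{a'}\psi_{h,a'}R_{h,a'}$ of the rotations themselves. This field is genuinely in $SBV(\omega,\R^{3\times3})$ with $\|\nabla\tilde R_h\|_{L^2}\le C$ (by the discrete $H^1$-estimate you derive in (b)) and $\mathcal{H}^1(J_{\tilde R_h})\le C$, so $SBV$-compactness yields a limit $R\in SBV^2$ with values in $\SO(3)$. One then still has to \emph{identify} $R$ with $(\nabla' r,\partial_1 r\wedge\partial_2 r)$ on $\omega\setminus D$; this is not automatic from $L^2$-convergence of $R_h$ and requires an additional $SBV$ closure argument applied to the three-dimensional sequence $\chi_{\Omega\setminus D_h'}y_h$ (bounded in $L^\infty$, with $\mathcal{H}^2$-bounded jump set by the bad-cube count), from which $\chi_{\Omega\setminus D_h'}\nabla y_h\rightharpoonup\chi_{\omega\setminus D}(\nabla' r,0)$ and hence $R'=\nabla' r$ a.e.\ on $\omega\setminus D$. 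Without this two-field structure and the identification step, the conclusion $r\in SBV^{2,2}_{\rm iso}(\omega)$ (in particular $\nabla r\in SBV^2$) is not reached.
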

Here the terms on the right hand sides are to be interpreted as functions of $x = (x', x_3) \in \Omega$ that only depend on $x' \in \omega$.

\begin{remark}\label{rmk:sym-second-derivatives}
If $r \in SBV^{2,2}_{\rm iso}(\omega)$ is such that $\mathcal{H}^1(J_r) < \infty$, it follows from \cite[Theorem~3.1]{ChambolleGiacominiPonsiglione:07} that the measure
\begin{equation*}
\curl(\nabla r_{i}) = 
\begin{bmatrix}
0       & \Der_{1}(\partial_{2}r_{i}) - \Der_{2}(\partial_{1}r_{i})  \\
 \Der_{2}(\partial_{1}r_{i}) - \Der_{1}(\partial_{2}r_{i})      &  0 \\
\end{bmatrix} \in \mathcal{M}(\omega, \R^{2 \times 2}) 
\end{equation*}
is absolutely continuous with respect to $\mathcal{H}^{1} \restrict J_{r_{i}}$ for $i = 1, 2, 3$. Therefore, 
\begin{equation*}
\partial_{1}(\partial_{2}r)(x) = \partial_{2}(\partial_{1}r)(x) \quad \textrm{for $ \mathcal{L}^{2} $ a.e.\ $ x \in \omega $.}
\end{equation*}
\end{remark}

\begin{proof} \noindent {\bf The number of bad cubes.}
We proceed to show that the number of bad cubes is bounded by $C h^{-1}$. Set $c_1 = \min \psi/|\cdot|$. We first prove that there is a constant $c = c(s) > 0$ such that for each $x \in \partial A_h^{(\sigma_h)} \cap \overline{\Omega_h}$ with $\dist(x', \partial \omega) > 2 c_1 h$ we have 
\begin{align}\label{eq:Ash-A-volume}
 \mathcal{L}^3 \big( ( A_h^{(\sigma_h)} \setminus A ) \cap Z_{h}(x') \big) \ge c h^3, 
\end{align}
where $Z_{h}(x')$ is the cylinder $Z_{h}(x') = \{ y \in \mathbb{R}^3 : |y' - x'| \le 2 c_1 h, \ 0 \le y_3 \le h \}$. 

Indeed, for such an $x$ there is a $y \in \overline{A_h} \cap Z_{h}(x')$ with $\psi(x - y) = \sigma_h$ and the portion 
\[ B 
   = \big\{ z \in Z_{h}(x') : \psi(z-x) < \sigma_h \text{ and } \psi(z-y) < \sigma_h \big\} \] 
of the intersection of the open $\psi$-balls around $x$ and $y$ of radius $\sigma_h$ within $Z_{h}(x')$ satisfies 
\begin{align*}
  B \subset (A_h^{(\sigma_h)} \setminus A ) \cap Z_{h}(x') 
  \quad\text{and}\quad  
 \mathcal{L}^3 ( B ) \ge c_2 (s^2 h)^3 
\end{align*}
for a constant $c_2$ only depending on $\psi$. 

Let now $\mathcal{B}_{h}' = \{ a' \in \mathcal{B}_{h} : \dist(a', \partial \omega) > (\sqrt{2}+2 c_1)h \}$. For each $a' \in \mathcal{B}_{h}$ fix an $x(a') \in \partial A_h^{(\sigma_h)} \cap \overline{Q_h(a')}$. Using \eqref{eq:Ash-A-volume} we get that 
\begin{align*} 
    \#  \mathcal{B}_{h}'
    &\le c^{-1} h^{-3} \sum_{a' \in \mathcal{B}_{h}'} \mathcal{L}^3 \big( ( A_h^{(\sigma_h)} \setminus A ) \cap Z_{h}(x'(a')) \big) \\ 
    &\le C h^{-3} \mathcal{L}^3 \big( ( A_h^{(\sigma_h)} \setminus A ) \cap \Omega_h^- \big)  \\ 
    &\le C h^{-2} s (1 + \zeta(h + s)) \int_{\partial^* A_h \cap \Omega_h} \psi(\nu(A_h)) \, \mathrm d\mathcal{H}^2 
    + C h^{-1} s (1 + \zeta(h + s)), 
\end{align*}
where we have used \eqref{eq:min-drop-A} and $h^{-1} \sigma_h = s_h \le s$ and $\zeta(h + s_h) \le \zeta(h + s)$.  
As a direct consequence of the energy bound and the fact that $\partial \omega$ is Lipschitz we thus get  
\begin{equation}\label{number of bad cubes}
  \# \mathcal{B}_{h} 
  \leq \#\mathcal{B}_{h}' + C \mathcal{H}^1(\partial \omega) h^{-1}  
  \leq C h^{-1}.
\end{equation}

As by \eqref{eq:min-drop-D} and the energy bound also 
\begin{align}\label{eq:vol-sh-tube}
  \mathcal{L}^3(D_h^{(sh)_h} \setminus D_h) 
  \le Ch, 
\end{align}
we obtain from  
\begin{align}\label{eq:Dhprime-Dh}
  D_h' \triangle D_h 
  = D_h' \setminus D_h 
  \subset \bigcup_{a' \in \mathcal{B}_h} \overline{Q_h(a' )}\cup (D_h^{(sh)_h} \setminus D_h), 
\end{align}
the estimate $\mathcal{L}^3(D_h' \triangle D_h) \le C h$ and hence \eqref{Compactness:2} follows. 

\medskip

\noindent {\bf Estimates on a good cube.} Fix $a' \in \mathcal{G}_{h}^{\rm el}$. We use the rigidity theorem \cite[Theorem 3.1]{FrieseckeJamesMueller:02} to select a constant $C$ and a rotation $R_{h,a'} \in \SO(3)$ such that
\begin{equation*}
  \int_{Q_h(a')} |\nabla v_h - R_{h,a}|^{2} \, \mathrm dx 
  \leq C \int_{Q_h(a')} \dist^2(\nabla v_h, \SO(3)) \, \mathrm dx 
\end{equation*}
and, by rescaling, we obtain
\begin{equation}\label{estimate on one cube I}
  \int_{\hat{Q}_{h}(a')} |\nabla_h y_h - R_{h,a}|^{2} \, \mathrm dx 
  \leq C \int_{\hat{Q}_{h}(a')} \dist^2(\nabla_h y_h, \SO(3)) \, \mathrm dx. 
\end{equation}
We define the piecewise constant function $R_{h}: \omega \rightarrow \SO(3)$ so that 
\begin{equation}\label{eq:Rh-def}
  R_{h}(x') 
  = \begin{cases} 
	     R_{h,a} & \text{if } x' \in a' + (0,h)^2 \text{ with } a' \in \mathcal{G}_{h}^{\rm el}, \\ 
       \Id & \textrm{otherwise}. 
    \end{cases}
\end{equation}
Let $\chi_{h}^{\rm el} = \chi_{\Omega \setminus D_h'}$ be the characteristic function of $\bigcup_{a' \in \mathcal{G}_h^{\rm el}} \hat{Q}_h$. It follows from \eqref{estimate on one cube I} and the energy estimate \eqref{eq:E-y-bound} that 
\begin{equation}\label{L2 distance of the rescaled gradients from piecewice rotational function:2}
  \int_{\Omega} \chi_{h}^{\rm el}(x) |\nabla_{h} y_h(x) - R_{h}(x')|^{2} \, \mathrm dx 
	\leq C \int_{\Omega} \chi_{h}^{\rm el} \dist^2(\nabla_h y_h, \SO(3)) \, \mathrm dx 
  \leq C h^2. 
\end{equation}

We define 
\begin{equation*}
  c_{h,a'} 
  = \dashint_{Q_{h}(a')} ( v_h(x) - R_{h,a'} x ) \, \mathrm dx 
\end{equation*}
and the linear map $r_{h,a'} : \R^{3} \rightarrow \R^{3}$ by
\begin{equation*}
  r_{h,a'}(x) = R_{h,a'}(x', h x_3) + c_{h,a'}.
\end{equation*}
Note that the $c_{h,a'}$ are uniformly bounded since $\| y_h \|_{\infty} \leq C$. Thus we also have $\| r_{h,a'} \|_{\infty,\omega} \le C$ for a suitable constant $C > 0$. We apply the Poincarè-Wirtinger inequality to estimate
\begin{flalign*}
  & h^{-2} \int_{\hat{Q}_{h}(a')} |y_h - r_{h,a'}|^{2} \, \mathrm dx 
    = h^{-3} \int_{Q_h(a')} |v_h(x) - r_{,a'}(x',hx_3)|^{2} \, \mathrm dx \\
  &\qquad \leq C h^{-1} \int_{Q_h(a')} |\nabla v_h - R_{h,a'}|^{2} \, \mathrm dx
    = C \int_{\hat{Q}_{h}(a')} |\nabla_h y_h - R_{h,a'}|^{2} \, \mathrm dx.
\end{flalign*}
Applying \eqref{estimate on one cube I} we infer
\begin{flalign}\label{estimate on one cube II}
  \int_{\hat{Q}_{h}(a')} |y_h - r_{h,a'}|^{2} \, \mathrm dx 
  \leq C h^2 \int_{\hat{Q}_{h}(a')} \dist^2(\nabla_h y_h, \SO(3)) \, \mathrm dx.
\end{flalign}
We now define the piecewise linear function $r_{h}: \omega \rightarrow \R^3$ so that
\begin{equation*}
  r_{h}(x') 
  = \begin{cases} 
	     r_{h,a}(x',0) & \text{if } x' \in a' + (0,h)^2 \text{ with } a' \in \mathcal{G}_{h}^{\rm el}, \\ 
       (x',0) & \textrm{otherwise}  
    \end{cases}
\end{equation*} 
and notice that 
\begin{equation}\label{eq:r-planar-nonplanar}
  |r_{h,a'}(x) - r_{h,a'}(x',0)|
  = |R_{h,a'}(0,0,hx_3)| 
	\leq h. 
\end{equation} 
From \eqref{estimate on one cube II}, \eqref{eq:r-planar-nonplanar} and the energy estimate we get
\begin{flalign*}
\begin{split}
	&\int_{\Omega} \chi_{h}^{\rm el}(x) |y_h(x) - r_{h}(x')|^{2} \, \mathrm dx \\ 
	&\quad \leq C h^2 \int_{\Omega} \chi_{h}^{\rm el} \big( 1 + \dist^2(\nabla_h y_h, \SO(3)) \big) \, \mathrm dx 
	\leq C h^{2}
\end{split}
\end{flalign*} 
and, since $ \|y_h\|_{L^\infty(\Omega)} \leq C $, we may use \eqref{number of bad cubes} to conclude 
\begin{equation}\label{L2 distance of the rescaled gradients from piecewice rotational function:2'}
  \lim_{h \to 0} \|\chi_{h}^{\rm el} ( y_h - r_{h} ) \|_{L^2(\Omega)}=0.
\end{equation}

\medskip 

\noindent {\bf Estimates on two adjacent good cubes.} If $a', b' \in h \Z^{2}$ and $|a'-b'|=h$ we define
\begin{equation*}
  Q_{h}(a',b') 
  = \big( \overline{Q_{h}(a')} \cup \overline{Q_{h}(a')} \big)^{\circ}, \qquad  
  \hat{Q}_{h}(a',b') 
  = \big( \overline{\hat{Q}_{h}(a')} \cup \overline{\hat{Q}_{h}(a')} \big)^{\circ}. 
\end{equation*}
We fix $a', b' \in \mathcal{G}_{h}^{\rm el}$ with $|a'-b'|=h$. We apply again \cite[Theorem 3.1]{FrieseckeJamesMueller:02} to select a constant $C$ and a rotation $R_{h,a,b}$ such that, after rescaling, we have
\begin{flalign}\label{estimate on two cubes}
  \int_{\hat{Q}_{h}(a',b')} |\nabla_h y_h - R_{h,a,b}|^{2} \, \mathrm dx 
  \leq C \int_{\hat{Q}_{h}(a',b')} \dist^2(\nabla_h y_h, \SO(3)) \, \mathrm dx. 
\end{flalign}
Therefore combining \eqref{estimate on one cube I} and \eqref{estimate on two cubes} we get
\begin{flalign}\label{estimate on two cubes:1}
  |R_{h,a'} - R_{h,a',b'}|^{2} 
  &\leq 2 \dashint_{\hat{Q}_{h}(a')} |\nabla_h y_h - R_{h,a'}|^{2} \, \mathrm dx \nonumber \\
  &\quad + 2 \dashint_{\hat{Q}_{h}(a')} |\nabla_h y_h - R_{h,a',b'}|^{2} \, \mathrm dx \notag\\
  &\leq C \dashint_{\hat{Q}_{h}(a',b')} \dist^2(\nabla_h y_h, \SO(3)) \, \mathrm dx.
\end{flalign}
The same inequality is valid with $a'$ and $b'$ interchanged, whence we deduce
\begin{equation}\label{estimate for adjacent rotations}
  |R_{h,a'} - R_{h,b'}|^{2} 
   \leq C \dashint_{\hat{Q}_{h}(a',b')} \dist^2(\nabla_h y_h, \SO(3)) \, \mathrm dx.
\end{equation}

We define 
\begin{equation*}
  c_{h,a',b'} 
  = \dashint_{Q_h(a',b')} ( v_h(x) - R_{h,a',b'} x ) \, \mathrm dx 
\end{equation*}
and the linear map $r_{h,a',b'} : \R^{3} \rightarrow \R^{3} $ by
\begin{equation*}
  r_{h,a',b'}(x) = R_{h,a',b'}(x', h x_3) + c_{h,a',b'}.
\end{equation*}
We use now the Poincarè-Wirtinger inequality on the domain $Q_h(a',b')$ and \eqref{estimate on two cubes} to estimate
\begin{flalign*}
  \int_{\hat{Q}_{h}(a',b')} |y_h - r_{h,a',b'}|^{2} \, \mathrm dx 
  &= h^{-1} \int_{Q_{h}(a',b')} |v_h(x) - r_{h,a',b'}(x',hx_3)|^{2} \, \mathrm dx \\
  &\leq C h \int_{Q_{h}(a',b')} |\nabla v_h - R_{h,a',b'}|^{2} \, \mathrm dx \\
  &\leq C h^2 \int_{\hat{Q}_{h}(a',b')} \dist^2(\nabla_h y_h, \SO(3)) \, \mathrm dx. 
\end{flalign*}
Combining this inequality with \eqref{estimate on one cube II} we get
\begin{equation}\label{estimates on two cubes 1}
  \int_{\hat{Q}_{h}(a')} |r_{h,a',b'} - r_{h,a'}|^{2} \, \mathrm dx 
  \leq C h^2 \int_{\hat{Q}_{h}(a',b')} \dist^2(\nabla_h y_h, \SO(3)) \, \mathrm dx. 
\end{equation}

Now we notice that the constants satisfy the relation 
\begin{flalign*}
& c_{h,a'} + c_{h,b'} - 2 c_{h,a',b'} \\
& \qquad  =  \dashint_{Q_h(a')} (R_{h,a',b'} - R_{h,a'}) x \, \mathrm dx 
            +\dashint_{Q_h(b')} (R_{h,a',b'} - R_{h,b'}) x \, \mathrm dx;
\end{flalign*}
then employing Jensen's inequality and \eqref{estimate on two cubes:1} we estimate
\begin{flalign*}
  | c_{h,a'} + c_{h,b'} - 2 c_{h,a',b'} |^{2} 
  \leq C \dashint_{\hat{Q}_{h}(a',b')} \dist^2(\nabla_h y_h, \SO(3)) \, \mathrm dx. 
\end{flalign*}
By \eqref{estimate on two cubes:1} we then infer
\begin{equation*}
  |r_{h,b'}(x) + r_{h,a'}(x) - 2r_{h,a',b'}(x)|^{2} 
	\leq C \dashint_{\hat{Q}_{h}(a',b')} \dist^2(\nabla_h y_h, \SO(3)) \, \mathrm dx 
\end{equation*}
whenever $x \in \Omega$; hence we conclude with the help of \eqref{estimates on two cubes 1}
\begin{flalign*}
  &\int_{\hat{Q}_{h}(a')} |r_{h,b'} - r_{h,a'}|^{2} \, \mathrm dx \\ 
  &\quad  \leq 2 \int_{\hat{Q}_{h}(a')} |r_{h,b'} + r_{h,a'} - 2r_{h,a',b'}|^{2} \, \mathrm dx 
    + 2 \int_{\hat{Q}_{h}(a')} |2r_{h,a',b'} - 2r_{h,a'}|^{2} \mathrm dx \notag\\
  &\quad  \leq C h^{2} \dashint_{\hat{Q}_{h}(a',b')} \dist^2(\nabla_h y_h, \SO(3)) \, \mathrm dx. 
\end{flalign*}
If $c' \in h \Z^2$ a change of variables and \eqref{estimate for adjacent rotations} give
\begin{flalign*}
&\int_{\hat{Q}_{h}(c')} |r_{h,b'} - r_{h,a'}|^{2} \, dx \\
& \quad \leq 2 \int_{\hat{Q}_{h}(a')} |r_{h,b'} - r_{h,a'}|^{2} \, \mathrm dx 
  + 2 \int_{\hat{Q}_{h}(a')}|(R_{h,b'}' - R_{h,a'}')(a' - c')|^{2} \mathrm dx \\
& \quad \leq C \big( h^2 + |a' - c'|^2 \big) \dashint_{\hat{Q}_{h}(a',b')} \dist^2(\nabla_h y_h, \SO(3)) \, \mathrm dx.
\end{flalign*}
Together with \eqref{eq:r-planar-nonplanar} this yields 
\begin{flalign}\label{estimate for adjacent rotations:1}
&\int_{c'+(0,h)^2}	|r_{h,b'}(x',0) - r_{h,a'}(x',0)|^{2} \, \mathrm dx' \notag \\
& \quad \leq C \big( h^2 + |a' - c'|^2 \big) \dashint_{\hat{Q}_{h}(a',b')} \dist^2(\nabla_h y_h, \SO(3)) \, \mathrm dx + C h^4.
\end{flalign}
\medskip

\noindent {\bf Interpolation.} For $a' \in h \Z^2$ and $t > 0$ we define the parallel squares $Q'_{h,t}(a') = a' + (\frac{h-t}{2}, \frac{h+t}{2})$. 
We fix $\eta = \frac{1}{5} h$, we define
\begin{equation*}
  \omega_{h} 
	= \omega \setminus \bigcup_{a' \in \mathcal{G}_h^{\rm v} \cup \mathcal{B}_{h}} \overline{Q'_{h, h + \eta}(a')}. 
\end{equation*}
For later use we remark that, similarly as in \eqref{eq:Dhprime-Dh}, \eqref{eq:vol-sh-tube} and \eqref{number of bad cubes} imply that $\Omega \setminus (\omega_h \times (0,1)) \supset D_h' \supset D_h$ satisfies 
\begin{align}\label{eq:omegah-Dh} 
   \mathcal{L}^3 \big( \Omega \setminus (\omega_h \times (0,1)) \setminus D_h \big) \le C h. 
\end{align}

Let $\{ \psi_{h,a'}: a' \in h \Z^2 \}$ be a family of smooth functions such that $0 \leq \psi_{h,a'} \leq 1$,
\begin{equation*}
  \spt \psi_{h,a'} \subset  Q'_{h,h + \eta}(a'), \quad 
	\psi_{h,a'} \equiv 1 \text{ on } Q'_{h,h - \eta}(a'), \quad  
	\sum_{a' \in h \Z^2} \psi_{h,a'} \equiv 1 \text{ on } \omega 
\end{equation*}
and, moreover, $\|\nabla \psi_{h,a'} \|_{\infty} \leq 10 h^{-1}$. We define $\tilde{R}_{h} : \omega \rightarrow \R^{3 \times 3}$ and $\tilde{r}_{h}: \omega \rightarrow \R^{3}$ such that 
\begin{align*}
  \tilde{R}_{h}(x') 
  &= \sum_{a' \in \mathcal{G}_{h}^{\rm el}}\psi_{h,a'}(x') R_{h,a'} \; \; \text{if } x' \in \omega_{h}, 
  & \tilde{R}_{h}(x') 
  &= \Id \; \; \text{if } x' \in \omega \setminus \omega_{h}, \\ 
  \tilde{r}_{h}(x') 
  &= \sum_{a' \in \mathcal{G}_{h}^{\rm el}} \psi_{h,a'}(x') r_{h,a'}(x',0) \; \; \textrm{if } x' \in \omega_{h}, 
  & \tilde{r}_{h}(x') 
  &= (x',0) \; \; \textrm{if } x' \in \omega \setminus \omega_{h}.
\end{align*}
Notice $\tilde{R}_{h} \in SBV(\omega, \R^{3 \times 3})$ and $\tilde{r}_{h} \in SBV(\omega, \R^{3})$ with 
\begin{equation}\label{eq:infty-bd-rR}
  \| \tilde{R}_{h} \|_{\infty} \leq C \quad\text{and}\quad 
  \| \tilde{r}_{h} \|_{\infty} \leq C
\end{equation}
for some $C > 0$ and all $h > 0$. Since $ J_{\tilde{r}_{h}}$ and $J_{\tilde{R}_{h}}$ are contained in $\omega \cap \partial \omega_h$ which is covered by $\bigcup_{a' \in \mathcal{B}_{h}} \partial Q'_{h,h + \eta}(a')$, we use \eqref{number of bad cubes} to conclude
\begin{equation}\label{jump set of the approximating rotations}
  \mathcal{H}^{1}(J_{\tilde{r}_{h}} \cup J_{\tilde{R}_{h}}) 
	\leq Ch \# \mathcal{B}_{h} 
	\leq C.
\end{equation}
Let $a' \in h \Z^2$. For $\mathcal{L}^2$-a.e.\ $x' \in Q'_{h}(a')$ we remark that there are three possibilities for the number of (enlarged) squares containing $x'$: $\# \{b' \in h \Z^2 : Q'_{h,h + \eta}(b') \ni x \} \in \{1, 2, 4\}$. For $x' \in Q'_{h,h - \eta}(a')$ this number is $1$. Near the corners, on 
$$ \sigma^c(a') 
   := \bigcup_{b' \in \{a \pm h e_1 \pm h e_2\}} Q'_{h}(a') \cap Q'_{h,h + \eta}(b'), $$ 
it is $4$. Finally, near the lateral boundary but away from the corners, on 
$$ \sigma^l(a') 
   := Q'_{h}(a') \setminus \overline{Q'_{h,h - \eta}(a') \cup \sigma^c(a')} $$ 
it is $2$. 
\medskip 

\noindent {\bf Estimates on the interpolation error.} Next we prove that
\begin{equation}\label{same convergence of piecewise functions and partition of unity}
  \int_{\omega_{h}}|\tilde{R}_{h}-R_{h}|^{2} \, \mathrm d x' 
	\leq Ch^{2}, \quad \int_{\omega_{h}}|\tilde{r}_{h} - r_{h}|^{2} \, \mathrm d x'  
	\leq C h^{2}.
\end{equation}
The proof is respectively based on the key estimates \eqref{estimate for adjacent rotations} and \eqref{estimate for adjacent rotations:1}. We prove in detail only the second inequality, since the first can be proved along the same lines. 

Suppose $a' \in \mathcal{G}_{h}^{\rm el}$. We consider the three parts of $Q'_{h}(a')$ separately. 
\smallskip 

\noindent{\em 1:} On $Q'_{h, h - \eta}(a')$ one has  $\tilde{r}_{h} = r_{h}$. 
\smallskip  

\noindent{\em 2:} On $\sigma^l(a')$ we use \eqref{estimate for adjacent rotations:1} with $c' = a'$ in order to estimate 
\begin{flalign*}
  \int_{\sigma^l(a') \cap \omega_{h}} |\tilde{r}_{h} - r_{h}|^{2} \, \mathrm dx 
  &\leq \sum_{\substack{b' \in \mathcal{G}_{h}^{\rm el} \\ |b' - a'| = h}} \int_{Q'_{h,h}(a') \cap Q'_{h, h + \eta}(b')} |r_{h,b'} - r_{h,a'}|^{2} \, \mathrm dx \\
  &\leq C h^{2} \sum_{\substack{b' \in \mathcal{G}_{h}^{\rm el} \\ |b' - a'| = h}} \dashint_{\hat{Q}_{h}(a',b')} \dist^2(\nabla_h y_h, \SO(3)) \, \mathrm dx + Ch^4. 
\end{flalign*}
\smallskip  

\noindent{\em 3:} On $\sigma^c(a')$ we first note that if $\sigma^c_{s_1s_2}(a') := Q'_{h, h}(a') \cap Q'_{h, h + \eta}(b') \subset \omega_{h}$ for $b' = a' + s_1 e_1 + s_2 e_2$ with $s_1, s_2 \in \{-h, +h\}$, then $b'(i_1,i_2) = a' + i_1 s_1 e_1 + i_2 s_2 e_2 \in \mathcal{G}_{h}^{\rm el}$ for all $0 \le i_1, i_2 \le 1$ and 
\begin{flalign*}
  \int_{\sigma^c_{s_1s_2}(a')} |\tilde{r}_{h} - r_{h}|^{2} \, \mathrm dx 
  &\leq \int_{\sigma^c_{s_1s_2}(a')} \sum_{0 \le i_1, i_2 \le 1} |r_{h,b'(i_1,i_2)} - r_{h,a'}|^{2} \, \mathrm dx. 
\end{flalign*}
The summands corresponding to $(i_1,i_2) \ne (1,1)$ can be directly estimated using \eqref{estimate for adjacent rotations:1} with $c' = a'$; instead, for the summand corresponding to $(i_1,i_2) = (1,1)$ we first notice
\begin{flalign}\label{eq:diagonal-estimate}
\begin{split}
  &\int_{Q'_{h, h}(a')} |r_{h, b'(1,1)} - r_{h, a'} |^{2} \mathrm dx \\
  &\quad \leq 2 \int_{Q'_{h, h}(a')}|r_{h, b'(1,1)} - r_{h, b'(1,0)}|^{2} \, \mathrm dx 
    +  2\int_{Q'_{h, h}(a')}|r_{h, b'(1,0)} - r_{h, a'}|^{2} \, \mathrm dx,
\end{split}
\end{flalign}
and then apply \eqref{estimate for adjacent rotations:1}; hence 
\begin{equation*}
  \int_{\sigma^c_{s_1s_2}(a')} |\tilde{r}_{h} - r_{h}|^{2} \, \mathrm dx 
  \leq C h^{2} \sum_{0 \le i_1 + i_2 \le 1} \dashint_{\hat{Q}_{h}(b'(i_1,i_2))} \dist^2(\nabla_h y_h, \SO(3)) \, \mathrm dx + C h^4.
\end{equation*}
Summing over $b'$ we obtain 
\begin{flalign*}
  \int_{\sigma^c(a') \cap \omega_{h}} |\tilde{r}_{h} - r_{h}|^{2} \, \mathrm dx 
  & \leq C h^{2} \sum_{\substack{b' \in \mathcal{G}_{h}^{\rm el}, \\ |b'-a'| \le \sqrt{2} h}} \dashint_{\hat{Q}_{h}(b')} \dist^2(\nabla_h y_h, \SO(3)) \, \mathrm dx + C h^4. 
\end{flalign*}
\smallskip

Combining these estimates and summing over $a' \in \mathcal{G}_{h}^{\rm el}$, we get the second inequality in \eqref{same convergence of piecewise functions and partition of unity} with the help of the energy estimate.

Since $\| r_{h,a'}\|_{L^{\infty}(\omega)} \leq C $ for all $a' \in \mathcal{G}_{h}^{\rm el}$, it follows that $ \| \tilde{r}_{h} - r_{h} \|_{\infty} \leq C $; moreover $ \| \tilde{R}_{h} - R_{h}\|_{\infty} \leq C$. Since $\tilde{r}_{h}(x') = (x',0) = r_{h}(x')$ and $\tilde{R}_{h}(x') = \Id = R_{h}(x')$ for $x \in Q'_{h, h + \eta}(a')$, $a' \in \mathcal{G}_{h}^{\rm v}$, it follows from \eqref{same convergence of piecewise functions and partition of unity} and \eqref{number of bad cubes} that
\begin{equation}\label{same convergence of piecewise functions and partition of unity:2}
  \lim_{h \to 0} \|\tilde{R}_{h} - R_{h}\|_{L^{2}(\omega)} = 0, \quad 
  \lim_{h \to 0} \|\tilde{r}_{h} - r_{h}\|_{L^{2}(\omega)}=0.
\end{equation}

\medskip 

\noindent {\bf Gradient estimates for the interpolation.} 
We prove now 
\begin{equation}\label{L2 estimate for the abs cont part of the gradient}
  \int_{\omega} |\nabla \tilde{R}_{h}|^{2} \, dx \leq C, \qquad 
  \int_{\omega} |\nabla \tilde{r}_{h}|^{2} \, dx \leq C.
\end{equation}
As for \eqref{same convergence of piecewise functions and partition of unity} the proof of these inequalities is respectively based on \eqref{estimate for adjacent rotations} and \eqref{estimate for adjacent rotations:1} and we provide details only for the second. We notice that
\begin{equation*}
  \nabla \tilde{r}_{h} 
  = \chi_{\omega_{h}} \big( G_{h} + \tilde{R}'_{h} \big) 
  + \chi_{\omega \setminus \omega_{h}} \Id' 
\end{equation*}
where $G_{h} =  \sum_{a' \in \mathcal{G}_{h}^{\rm el}} r_{h,a'} \otimes \nabla \psi_{h,a'}$. Since $\| \tilde{R}'_{h}\|_{\infty} \leq C$, it remains to prove that $\|G_{h}\|_{L^{2}(\omega_{h})} \leq C$. To this end we again estimate on the three parts of a square $Q'_{h}(a')$, $a' \in \mathcal{G}_{h}^{\rm el}$, separately. 
\smallskip 

\noindent{\em 1:} On $Q'_{h, h - \eta}(a')$ we have $G_{h} = r_{h,a'} \otimes \nabla \psi_{h,a'} = 0$. 
\smallskip 

\noindent{\em 2:} For an estimate on $\sigma^l(a')$ we first observe that for $b' \in \{\pm h e_1, \pm h e_2 \}$ on the set $\sigma^l(a') \cap Q_{h, h + \eta}'(b') \cap \omega_{h}$ (which is non-empty only if $b' \in \mathcal{G}_{h}^{\rm el}$) we have $\nabla \psi_{h,a'} + \nabla \psi_{h,b'} = 0$. Using \eqref{estimate for adjacent rotations:1} we estimate
\begin{flalign*}
  \int_{\sigma^l(a') \cap \omega_{h}} |G_{h}|^{2} \, \mathrm dx 
  &= \sum_{\substack{b' \in \mathcal{G}_{h}^{\rm el} \\ |b' - a'| = h}} \int_{Q'_{h, h}(a') \cap Q'_{h, h + \eta}(b')} |r_{h,a'} \otimes \nabla \psi_{h,a'} + r_{h,b'} \otimes \nabla \psi_{h,b'}|^{2} \, \mathrm dx \\ 
  &= \sum_{\substack{b' \in \mathcal{G}_{h}^{\rm el} \\ |b' - a'| = h}} \int_{Q'_{h, h}(a') \cap Q'_{h, h + \eta}(b')} |( r_{h,a'} - r_{h,b'} ) \otimes \nabla \psi_{h,a'}|^{2} \, \mathrm dx \\
  &\leq C h^{-2} \sum_{\substack{b' \in \mathcal{G}_{h}^{\rm el} \\ |b' - a'| = h}} \int_{Q'_{h, h}(a') \cap Q'_{h, h + \eta}(b')} |r_{h,a'} - r_{h,b'}|^{2} \, \mathrm dx \\ 
  &\leq C \sum_{\substack{b' \in \mathcal{G}_{h}^{\rm el} \\ |b' - a'| = h}} \dashint_{\hat{Q}_{h}(a',b')} \dist^2(\nabla_h y_h, \SO(3)) \, \mathrm dx + C h^2. 
\end{flalign*}
\smallskip  

\noindent{\em 3:} On $\sigma^c(a')$ we can argue similarly. If $\sigma^c_{s_1s_2}(a') := Q'_{h, h}(a') \cap Q'_{h, h + \eta}(b') \subset \omega_{h}$ for $b' = a' + s_1 e_1 + s_2 e_2$ with $s_1, s_2 \in \{-h, +h\}$, then with $b'(i_1,i_2) = a' + i_1 s_1 e_1 + i_2 s_2 e_2$ 
$$ G_{h} 
   = \sum_{0 \le i_1, i_2 \le 1} r_{h, b'(i_1,i_2)} 
     \otimes \nabla \psi_{h,b'(i_1,i_2)} $$ 
on $\sigma^c_{s_1s_2}(a')$ and using $\sum_{0 \le i_1, i_2 \le 1} \nabla \psi_{h,b'(i_1,i_2)} = 0$ on this set we get 
\begin{flalign*}
  \int_{\sigma^c_{s_1s_2}(a')} |G_{h}|^{2} \, \mathrm dx 
  &\leq 4 \int_{\sigma^c_{s_1s_2}(a')} \sum_{0 \le i_1, i_2 \le 1} |( r_{h, b'(i_1,i_2)} - r_{h,a'}) \otimes \nabla \psi_{h, b'(i_1,i_2)}|^{2} \, \mathrm dx \\ 
  &\leq C h^{-2} \int_{\sigma^c_{s_1s_2}(a')} \sum_{0 \le i_1, i_2 \le 1} |r_{h, b'(i_1,i_2)} - r_{h,a'}|^{2} \, \mathrm dx \\ 
  &\leq C \sum_{0 \le i_1 + i_2 \le 1} \dashint_{\hat{Q}_{h}(b'(i_1,i_2))} \dist^2(\nabla_h y_h, \SO(3)) \, \mathrm dx + C h^2 
\end{flalign*}
with the help of \eqref{eq:diagonal-estimate} and \eqref{estimate for adjacent rotations:1} as above. This proves 
\begin{flalign*}
  \int_{\sigma^c(a') \cap \omega_{h}} |G_{h}|^{2} \, dx 
  &\leq C \sum_{\substack{b' \in \mathcal{G}_{h}^{\rm el}, \\ |b'-a'| \le \sqrt{2} h}} \dashint_{\hat{Q}_{h}(b')} \dist^2(\nabla_h y_h, \SO(3)) \, \mathrm dx + C h^2. 
\end{flalign*} 
Summing over good squares we find that $\|G_{h}\|_{L^{2}(\omega_{h})} \leq C$. 
\medskip 

\noindent {\bf Convergence.} 
In view of \eqref{eq:infty-bd-rR}, \eqref{jump set of the approximating rotations} and \eqref{L2 estimate for the abs cont part of the gradient} a standard $SBV$ compactness result (cf.\ \cite[Theorems 4.7 and 4.8]{AmbrosioFuscoPallara:00}) implies the existence of two maps $r \in L^{\infty}(\omega, \R^{3}) \cap SBV(\omega, \R^{3})$ and $R \in L^{\infty}(\omega, \R^{3 \times 3}) \cap SBV(\omega,\R^{3 \times 3})$ with $\mathcal{H}^{1}(J_{r}) < \infty$, $\mathcal{H}^{1}(J_{R}) < \infty$ such that, up to subsequences, 
\begin{align}
  \tilde{r}_{h} &\to r \quad \text{in } L^{2}(\omega, \R^{3}), & 
  \nabla' \tilde{r}_{h} &\rightharpoonup \nabla' r \quad \text{in } L^{2}(\omega,\R^{3 \times 2}), \label{weak convergente of approx. gradients-r} \\ 
  \tilde{R}_{h} &\to R \quad \text{in } L^{2}(\omega,\R^{3 \times 3}), & 
  \partial_{i} \tilde{R}_{h} &\rightharpoonup \partial_{i} R \quad \text{in } L^{2}(\omega,\R^{3 \times 3}) \text{ for } i = 1,2. \label{weak convergente of approx. gradients-R}
\end{align}

We observe that $R(x) \in \SO(3)$ for $\mathcal{L}^{2}$ a.e.\ $x \in \omega$. Moreover we can combine \eqref{number of bad cubes}, \eqref{L2 distance of the rescaled gradients from piecewice rotational function:2}, \eqref{L2 distance of the rescaled gradients from piecewice rotational function:2'}, \eqref{same convergence of piecewise functions and partition of unity:2}, \eqref{weak convergente of approx. gradients-r} and \eqref{weak convergente of approx. gradients-R} to conclude that, up to subsequences, 
\begin{equation}\label{same convergence of piecewise functions and partition of unity:1}
  \lim_{h \to 0}	\|\chi_{h}^{\rm el} (y_h - r)\|_{L^{2}(\Omega)} = 0, \quad 
  \lim_{h \to 0} \|\chi_{h}^{\rm el} (\nabla_{h} y_h - R)\|_{L^{2}(\Omega)} = 0.
\end{equation}
Together with \eqref{Compactness:2} this proves \eqref{Compactness:3} and \eqref{Compactness:4}, as soon as we have shown that 
\begin{equation}\label{eq:R-nabla-r}
  R = (\nabla' r, \partial_1 r \wedge \partial_2 r )\text{ on }\omega \setminus D. 
\end{equation}

To see this we notice that $\| y_h \|_{\infty} \leq C$, $\mathcal{H}^{2}(J_{\chi_{h}^{\rm el} y_h}) \leq C$ by \eqref{number of bad cubes}, $\nabla (\chi_{h}^{\rm el} y_h)= \chi_{h}^{\rm el} \nabla y_h$ and $\|\nabla (\chi_{h}^{\rm el} y_h)\|_{L^{2}(\Omega)} \leq C$ by \eqref{same convergence of piecewise functions and partition of unity:1}. Then we apply the $SBV$ compactness theorem to the sequence $ \chi_{h}^{\rm el} y_h$ and we infer from \eqref{Compactness:3} that, up to subsequences,
\begin{equation*}
  \chi_{h}^{\rm el} y_h \to \chi_{\omega \setminus D} r \quad \text{in } L^{2}(\Omega, \R^{3}), \qquad 
  \chi_{h}^{\rm el} \nabla y_h \rightharpoonup \chi_{\omega \setminus D}  (\nabla' r, 0) \quad \text{in } L^{1}(\Omega, \R^{3 \times 3}). 
\end{equation*}
Hence, $\chi_{\omega \setminus D} \nabla' r = \chi_{\omega \setminus D} R'$ by \eqref{Compactness:4}. Since $R \in \SO(3)$ a.e.\ this concludes the proof.  
\end{proof}

\section{The lower bound}

In this section we prove Theorem~\ref{theo:main}(i). Let $(y_h, D_h)$ be a sequence and $(y, D)$ an element in $L^\infty(\Omega, \mathbb{R}^3) \times \mathcal{F}(\Omega)$ such that $y_h \to y$ in $L^1(\Omega, \mathbb{R}^3)$, $\| y_h \|_{L^{\infty}} \le C$, $\chi_{D_h} \to \chi_D$ in $L^1(\Omega, \mathbb{R})$ and $ (D_h)_{0 < h < 1} $ satisfies the $ \psi $-minimal droplet assumption. Without loss of generality we pass to subsequences (not relabeled) in the following and assume that $\lim_{h \to 0} \mathcal{E}_h(y_h, D_h)$ exists and is finite. In particular, $(y_h, D_h) \in W^{1,2}(\Omega, \mathbb{R}^3) \times \mathcal{F}(\Omega)$ for all $h$ and $(y, D) \in SBV^{2,2}_{\rm iso}(\omega) \times \mathcal{F}(\omega)$ by Theorem~\ref{Compactness}. We will moreover make use of the auxiliary functions defined and estimates obtained in the proof of Theorem~\ref{Compactness}. 

We will provide estimates from below on the elastic part and the surface part separately.  
\medskip 

\noindent {\bf Lower bound for the surface part.} 
We fix $s < 1$ and consider the layers $\Omega_j = \omega \times (\frac{j-1}{n}, \frac{j}{n})$ of height $\frac{h}{n}$, where $n \in \mathbb{N}$ is such that $s^2 n > 1+\max\{ \psi(y) : y \in [-1,1]^3 \}$. On each such layer we proceed exactly as in the previous section (with $\Omega$ replaced by $\Omega_j$, $h$ by $\frac{h}{n}$ and $(y_h, D_h)$ by $(y_h|_{\Omega_j}, D_h \cap \Omega_j)$ to define interpolations $\tilde{r}_{j,h}$ and $\tilde{R}_{j,h}$ of rigid motions and rotations, respectively, on cubes of side-length $\frac{h}{n}$. By \eqref{weak convergente of approx. gradients-r}, \eqref{weak convergente of approx. gradients-R}, \eqref{eq:R-nabla-r} we have (passing to subsequences) 
\begin{align}
  \tilde{r}_{j,h} &\to r_{j,0} \quad \text{in } L^{2}(\omega, \R^{3}), & 
  \nabla' \tilde{r}_{j,h} &\rightharpoonup \nabla' r_{j,0} \quad \text{in } L^{2}(\omega,\R^{3 \times 2}), \label{eq:tr-conv} \\ 
  \tilde{R}_{j,h} &\to R_{j,0} \quad \text{in } L^{2}(\omega,\R^{3 \times 3}), & 
  \partial_{i} \tilde{R}_{j,h} &\rightharpoonup \partial_{i} R_{j,0} \quad \text{in } L^{2}(\omega,\R^{3 \times 3}),\ i = 1,2, \label{eq:tR-conv}
\end{align}
for suitable $(r_{j,0}, D_{j,0}) \in SBV^{2,2}_{\rm iso}(\omega) \times \mathcal{F}(\omega)$ and $R_{j,0} = (\nabla' r_{j,0}, \partial_1 r_{j,0} \wedge \partial_2 r_{j,0} )$. (Note that $R_{j,0} = \Id$ and $r_{j,0} = \id$ on $D_{j,0}$ by construction.) 

Observe that the set $A_h'$ for the whole plate $\Omega_h$ in  particular contains the little cubes of sidelength $\frac{h}{n}$ within the $j$-th layer that intersect $\partial \Omega$ or whose closure intersects the closure of $A_h^{(\sigma_{j,h/n})}$ since $\sigma_{j,h/n} \le s \frac{h}{n} \le s^2 h \le \sigma_h$. Thus $D_h' \cap \Omega_j \supset D_{j,h}'$. As moreover $\mathcal{L}^3 \big( (D_h' \cap \Omega_j) \setminus D_{j,h}' \big) \le \mathcal{L}^3 \big( (D_h' \setminus D_h) \cap \Omega_j \big) \to 0$, we get from \eqref{Compactness:3} in Theorem~\ref{Compactness} for $\Omega_j$ 
$$ \chi_{\Omega_j \setminus D_h'} y_h|_{\Omega_j} \to \chi_ {\omega \setminus D_{j,0}} r_{j,0}|_{\Omega_j}, $$
while \eqref{Compactness:3} in Theorem~\ref{Compactness} for $\Omega$ gives 
$$ \chi_{\Omega_j \setminus D_h'} y_h|_{\Omega_j} \to \chi_ {\omega \setminus D} r|_{\Omega_j}. $$
It follows that $r_{j,0} = r$, $R_{j,0} = R$ and $D_{j,0} = D$ for each $j$.

Our choice of $n$ guarantees that $(-\tfrac{h}{n}, \tfrac{h}{n})^3 \subset \{ y \in \mathbb{R}^3 : \psi^{\circ}(y) \le s^2 h \}$ and hence that any interior bad cuboid of the layer $\Omega_j$ is contained in $D_h^{(2 \sigma_h)_h} \cap \Omega_j$. Let $\theta_{h} : \Omega \to [0,1]$ be a smooth cut-off function such that $\theta_{h} \equiv 0$ on $D_h^{(2 \sigma_h)_h}$ and $\theta_{h} \equiv 1$ on $\Omega \setminus D_h^{(3 \sigma_h)_h}$. We recall from \eqref{eq:vol-sh-tube} that 
\begin{align}\label{eq:vol-3sh}
  \mathcal{L}^3(D_h^{(3 \sigma_h)_h} \Delta D_h) \le Ch.  
\end{align}

Let $\omega' \subset \subset \omega$ have a Lipschitz boundary and set $\Omega'_j = \omega' \times (\frac{j-1}{n}, \frac{j}{n})$. We define $f_{j, h} \in W^{1,2}(\Omega'_j, \R^3 \times \R^{3 \times 3})$ by 
$$ f_{j, h}(x) 
   = \theta_{h}(x)\big(\tilde{r}_{j,h}(x), \tilde{R}_{j,h}(x) \big) 
     + \big( 1 - \theta_{h}(x) \big) (x, \Id). $$ 
Recalling \eqref{eq:infty-bd-rR} we see that $\| f_{j, h} \|_{\infty} \le C$. Moreover, by \eqref{L2 estimate for the abs cont part of the gradient} we also have 
\begin{align}\label{eq: L2-bd-fjh}
  \| \nabla f_{j, h} \|_{L^2(\Omega_j' \setminus D_h^{(3 \sigma_h)_h})} \le C. 
\end{align}

Now consider the functionals $\mathcal{K}_\eps$ on $W^{1,2}(\Omega'_j, \R^3 \times \R^{3 \times 3}) \times \mathcal{F}(\Omega'_j)$ given as 
$$ \mathcal{K}_\eps(f, E) 
   = \eps \int_{\Omega'_j \setminus E} |\nabla f(x)|^2 \, \mathrm dx 
     + \int_{\partial^* E \cap \Omega'_j} \psi_0(\nu'(E)) + \eps |\nu_3(E)|\, \mathrm d \mathcal{H}^2. $$
The $L^2$ bound in \eqref{eq: L2-bd-fjh} shows that  
\begin{align*}
  \int_{\partial D_h^{(3 \sigma_h)_h} \cap \Omega'_j} \psi(\nu_h(D_h^{(3 \sigma_h)_h})) \, \mathrm d \mathcal{H}^2 
  &\ge \int_{\partial D_h^{(3 \sigma_h)_h} \cap \Omega'_j} \psi_0(\nu'(D_h^{(3 \sigma_h)_h})) \, \mathrm d \mathcal{H}^2 \\ 
  &\ge \mathcal{K}_\eps(f_{j, h},  D_h^{(3 \sigma_h)_h}) - C \eps. 
\end{align*} 
By \eqref{eq:tr-conv}, \eqref{eq:tR-conv} and \eqref{eq:vol-3sh} we have $f_{j,h} \to (r, R)$ in $L^1(\Omega'_j, \R^3 \times \R^{3 \times 3})$ and $\chi_{D_h^{(3 \sigma_h)_h}} \to \chi_{D}$ in $L^1(\Omega'_j)$. We may thus invoke the general Relaxation Theorem~\ref{theo:bulk-relax} to deduce 
\begin{align*}
  &\liminf_{h \to 0} \mathcal{K}_\eps(f_{j, h},  D_h^{(3 \sigma_h)_h})
  \ge \mathcal{K}_\eps^{\rm rel} \big( (r, R), D \big) \\ 
  &= \eps \int_{\Omega'_j \setminus D} |(\nabla (r, R)(x)|^2 \, \mathrm dx 
  + 2 \int_{\Omega'_j \cap J_{(r,R)} \cap D^0} \psi_0 (\nu'(r, R)) + \eps |\nu_3(r, R)| \, \mathrm d\mathcal{H}^2 \\
  &\quad + \int_{\Omega'_j \cap \partial^* D} \psi_0 (\nu'(D)) + \eps |\nu_3(D)| \, \mathrm d\mathcal{H}^2. 
\end{align*} 
It follows that 
\begin{align*}
  &\liminf_{h \to 0} \int_{\partial^* D_h^{(3 \sigma_h)_h} \cap \Omega'_j} \psi(\nu_h(D_h^{(3 \sigma_h)_h})) \, \mathrm d \mathcal{H}^2 \\ 
  &\quad \ge 2 \int_{\Omega'_j \cap J_{(r,R)} \cap D^0} \psi_0 (\nu'(r, R)) \, \mathrm d\mathcal{H}^2 
  + \int_{\Omega'_j \cap \partial^* D} \psi_0 (\nu'(D)) \, \mathrm d\mathcal{H}^2 - C \eps. 
\end{align*} 
We now sum over $j$ and make use of our specific choice of $\sigma_h$ in \eqref{eq:A3sh-surf-est}. 
Noting that $\mathcal{H}^2(\Omega \cap J_{(r,R)} \cap \partial \Omega_j) = \mathcal{H}^2(\Omega \cap \partial^* D \cap \partial \Omega_j) = 0$ we arrive at 
\begin{align*}
  &\liminf_{h \to 0} \int_{\partial^* D_h \cap \Omega} \psi(\nu_h(D_h)) \, \mathrm d \mathcal{H}^2 \\
  &\quad \ge \frac{1-s}{1 + \zeta(4s)} \liminf_{h \to 0} \int_{\partial D_h^{(3 \sigma_h)_h} \cap \Omega'} \psi(\nu_h(D_h^{(3 \sigma_h)_h})) \, \mathrm d \mathcal{H}^2 
  - \frac{\zeta(4s)}{1 + \zeta(3s)} \\ 
  &\quad \ge 2 \int_{\Omega' \cap J_{(r,R)} \cap D^0} \psi_0 (\nu'(r, R)) \, \mathrm d\mathcal{H}^2 
  + \int_{\Omega' \cap \partial^* D} \psi_0 (\nu'(D)) \, \mathrm d\mathcal{H}^2 - C \eps. 
\end{align*} 
Now sending first $\eps$ and then $s$ to $0$ and finally $\omega’ \nearrow \omega$, the monotone convergence theorem gives 
\begin{align*}
  &\liminf_{h \to 0} \int_{\partial^* D_h \cap \Omega} \psi(\nu_h(D_h)) \, \mathrm d \mathcal{H}^2 \\ 
  &\quad \ge 2 \int_{J_{(r,R)} \cap D^0} \psi_0 (\nu'(r, R)) \, \mathrm d\mathcal{H}^1 
  + \int_{\omega \cap \partial^* D} \psi_0 (\nu'(D)) \, \mathrm d\mathcal{H}^1. 
\end{align*} 
\medskip 

\noindent {\bf Lower bound for the bulk part.} 
For the elastic contributions we have to quantify the deviation of $\nabla_h y_h$ from $\SO(3)$. Using the piecewise constant approximation $R_h$, the asymptotic energy can readily be estimated in terms of the limiting strain $G$ along the lines of \cite{FrieseckeJamesMueller:02}. However, due to possible void sets $D_h$, the identification of $G$ is more complicated. For this we use the smooth approximation $\tilde{R}_h$ and an $SBV$ closure argument. 

First we recall $R_h$ from \eqref{eq:Rh-def}, viewed as a function of $x \in \Omega$ independent  of $x_3$, which by \eqref{same convergence of piecewise functions and partition of unity:2} and \eqref{weak convergente of approx. gradients-R} converges to $R$ in $L^2(\omega, \mathbb{R}^{3 \times 3})$ and by \eqref{L2 distance of the rescaled gradients from piecewice rotational function:2} moreover satisfies 
\begin{align}\label{eq:L2-dist-yh-Rh} 
  \| \chi_{\omega_h} (\nabla_h y_h - R_h) \|_{L^2}^2 
   \le C h^2. 
\end{align}

We set 
\begin{align*}
  G_h = \chi_{\omega_h} h^{-1} (R_h^T \nabla_h y_h - \Id). 
\end{align*}
By \eqref{eq:L2-dist-yh-Rh} $G_h$ is $L^2$ bounded and we may pass to a subsequence such that $G_h \rightharpoonup G$ for some $G \in L^2(\Omega, \mathbb{R}^{3 \times 3})$. Since $\Omega \setminus D_h \supset \omega_h \times (0,1)$ we may proceed exactly as in \cite{FrieseckeJamesMueller:02} to see 
\begin{align}\label{eq:lb-W-Q3} 
  \liminf_{h \to 0} h^{-2} \int_{\Omega \setminus D_h} W(\nabla_h y_h) \, \mathrm dx
  \ge \frac{1}{2} \int_{\Omega} Q_3(G) \, \mathrm dx. 
\end{align}
due to the frame invariance of $W$. 

In order to identify $G$ we also define the quantity 
$$ \tilde{G}_h 
   = \chi_{\omega_h} h^{-1} (\tilde{R}_h^T \nabla_h y_h - \Id)  $$
and note that 
$$ G_h - \tilde{G}_{h} 
   = \chi_{\omega_h} h^{-1} (R_h^T \tilde{R}_h - \Id)\tilde{R}_h^T \nabla_h y_h. $$
By \eqref{eq:omegah-Dh} and \eqref{same convergence of piecewise functions and partition of unity} we have 
$$ \chi_{\omega_h} h^{-1} (R_h^T \tilde{R}_h - \Id) 
   \rightharpoonup \chi_{\omega \setminus D} A $$ 
for some $A \in L^2(\omega, \mathbb{R}^{3 \times 3})$ weakly in $L^2(\omega, \mathbb{R}^{3 \times 3})$. Moreover, by \eqref{eq:infty-bd-rR} and \eqref{weak convergente of approx. gradients-R} we have $\tilde{R}_h^T \to R^T$ boundedly in measure and by \eqref{Compactness:4} and \eqref{eq:omegah-Dh} $\chi_{\omega_h} \nabla_h y_h \to \chi_{\omega \setminus D} R$ in $L^2(\Omega, \mathbb{R}^{3 \times 3})$. Thus, 
$$ G_h - \tilde{G}_{h} 
   \rightharpoonup \chi_{\omega \setminus D} A $$ 
weakly in $L^2(\omega, \mathbb{R}^{3 \times 3})$. 

We now determine the upper left $2 \times 2$ submatrix of the weak limit of $\tilde{G}_h$. Let $\Omega' \subset \subset \Omega$. We fix a $z \in \mathbb{R}$ with $|z| < \dist(\Omega', \partial \Omega)$. Denoting by 
$$ \Delta^{(z)} g(x) = \frac{1}{z} \big( g(x', x_3 + z) - g(x) \big) $$ 
the difference quotient $\Delta^{(z)} g : \Omega' \to \mathbb{R}^N$ of a function $g : \Omega \to \mathbb{R}^N$, in particular, we consider the sequence of functions $f_h \in SBV^2(\Omega', \mathbb{R}^3)$, defined by 
$$ f_h(x) 
   = \chi_{\omega_h} (x') h^{-1} \Delta^{(z)} \tilde{R}_h^T(x') y_h(x). $$ 
By \eqref{Compactness:4} and \eqref{eq:omegah-Dh} we have 
\begin{align}\label{eq:Deltaz-yh} 
  \chi_{\omega_h} (x') h^{-1} \Delta^{(z)} y_h(x) 
  &= \chi_{\omega_h} \int_0^1 h^{-1} \partial_3 y_h(x', x_3 + tz) \, \mathrm dt 
\nonumber \\ 
  &\to \chi_{\omega \setminus D} \partial_1 r(x') \wedge \partial_2 r(x') 
\end{align}
in $L^2(\Omega', \mathbb{R}^3)$ and thus, in combination with \eqref{weak convergente of approx. gradients-R},  
\begin{align*} 
  f_h 
  \to \chi_{\omega \setminus D} R^T (\partial_1 r \wedge \partial_2 r) 
  = \chi_{\omega \setminus D} \mathbf{e}_3. 
\end{align*}
in $L^1(\Omega', \mathbb{R}^3)$. 
The absolutely continuous part of the derivative is given by 
\begin{align*} 
  \nabla f_h 
  = \chi_{\omega_h} h^{-1} \big( \partial_1 \tilde{R}_h^T \Delta^{(z)} y_h, \partial_2 \tilde{R}_h^T \Delta^{(z)} y_h, 0 \big) 
  + \chi_{\omega_h} h^{-1} \Delta^{(z)} \tilde{R}_h^T \nabla y_h. 
\end{align*}
By \eqref{weak convergente of approx. gradients-R} and \eqref{eq:Deltaz-yh} the first summand on the right hand side converges to: 
\begin{align}\label{eq:F1h-formula} 
  \chi_{\omega \setminus D} \big( \partial_1 R^T (\partial_1 r \wedge \partial_2 r), \partial_2 R^T (\partial_1 r \wedge \partial_2 r), 0 \big)
\end{align} 
weakly in $L^1(\Omega', \mathbb{R}^{3 \times 3})$. The second summand can be rewritten as 
\begin{align*} 
  \chi_{\omega_h} h^{-1} \Delta^{(z)} \big( \tilde{R}_h^T \nabla y_h - \operatorname{diag}(1,1,h) \big) 
  = \operatorname{diag}(1,1,h) \tilde{G}_h
\end{align*}
and is thus bounded in $L^2(\Omega', \mathbb{R}^{3 \times 3})$, too. As moreover $\mathcal{H}^2 (J_{f_h}) 
\le \mathcal{H}^2 (\partial \omega \cap \partial \omega_h) \le C$ by \eqref{number of bad cubes}, 
the basic closure theorem in $SBV$ (see, e.g., \cite[Theorem~4.7]{AmbrosioFuscoPallara:00}) thus implies 
\begin{align*} 
  \nabla f_h 
  \rightharpoonup \nabla \mathbf{e}_3 
  = 0, 
\end{align*} 
and so the second summand on the right hand side converges to the negative value of \eqref{eq:F1h-formula}, i.e., 
\begin{align*} 
  h^{-1} \chi_{\omega_h} \Delta^{(z)} \tilde{R}_h^T \nabla y_h(x) 
  \rightharpoonup - \chi_{\omega \setminus D} \big( \partial_1 R^T (\partial_1 r \wedge \partial_2 r), \partial_2 R^T (\partial_1 r \wedge \partial_2 r), 0 \big)
\end{align*}
weakly in $L^1(\Omega', \mathbb{R}^{3 \times 3})$. 

This proves that 
\begin{align*} 
\tilde{G}_h'
   &= \chi_{\omega_h} h^{-1} (\tilde{R}_h^T \nabla' y_h - \Id') \\ 
  &\rightharpoonup \chi_{\Omega \setminus D} \big( \tilde{G}(x') - x_3 ( \partial_1 R^T (\partial_1 r \wedge \partial_2 r), R^T (\partial_1 r \wedge \partial_2 r) ) \big) 
\end{align*}
weakly in $L^2(\omega, \mathbb{R}^{3 \times 2})$ for some $\tilde{G} \in L^2(\omega, \mathbb{R}^{3 \times 2})$. So denoting the upper left $2 \times 2$ submatrices of $G$, $A$ and $\tilde{G}$ by $G''$, $A''$ and $\tilde{G}''$, respectively, we arrive at 
\begin{align*}
  G''(x) = \chi_{\omega \setminus D}(x') \big( \tilde{G}''(x') + A''(x') + \tfrac{1}{2} \II(x') + (x_3 - \tfrac{1}{2}) \II(x') \big),  
\end{align*}
where $\II$ is the second fundamental from associated to $r$. 

The remaining part of the proof is analogous to the elastic setting in \cite{FrieseckeJamesMueller:02}: Using $\int_0^1 (x_3 - \tfrac{1}{2}) \mathrm dx_3 = 0$ and $\int_0^1 (x_3 - \tfrac{1}{2})^2 \mathrm dx_3 = \frac{1}{12}$ one computes 
\begin{align*} 
  \int_{\Omega} Q_3(G) \, \mathrm dx 
  &\ge \int_{\omega \setminus D} Q_2(G'') \, \mathrm dx \\ 
  &= \int_{\omega \setminus D} Q_2(\tilde{G}'' + A''  + \tfrac{1}{2}\II) \, \mathrm dx' 
    + \frac{1}{12} \int_{\omega \setminus D} Q_2(\II) \, \mathrm dx' 
\end{align*}
and so \eqref{eq:lb-W-Q3} implies 
\begin{align*}
  \liminf_{h \to 0} h^{-2} \int_{\Omega \setminus D_h} W(\nabla_h y_h) \, \mathrm dx
  \ge \frac{1}{24} \int_{\omega \setminus D} Q_2(\II) \, \mathrm dx'. 
\end{align*}

\section{The upper bound}

We now prove Theorem~\ref{theo:main}(ii). First we construct a recovery sequence for general $r \in SBV_{\rm iso}(\omega)$ with $R = (\partial_1 r, \partial_2 r, \partial_1 r \wedge \partial_2 r)$ and $D \in \mathcal{F}(\omega)$. To this end, we begin with an auxiliary 3d approximation $(w_h, E_h) \in SBV(\Omega;\R^3) \times \mathcal{F}(\Omega)$. 

Fix $\varphi \in C^{\infty}_c(\omega; \R^2)$ and set $f_h : \mathbb{R}^3 \to \R^3$ by
\begin{align}\label{eq: f_h}
f_h(x) 
= (f_h'(x), f_{h,3}(x)) 
= \big( x' - h (x_3 - \tfrac{1}{2}) \varphi(x'), x_3 \big).
\end{align}
For $h$ sufficiently small $f_h(\Omega) \subset \Omega$ and $ f_h| (\mathbb{R}^2 \times (0,1)) $ is a diffeomorphism into  $\mathbb{R}^2 \times (0,1) $. Also fix $d \in W^{1,2}(\omega, \mathbb{R}^3) \cap L^\infty(\omega, \mathbb{R}^3)$. We then define $w_h \in SBV(\Omega; \R^3)$ (see Remark \ref{rmK: coordinate changes SBV}) by setting 
\begin{align}\label{eq: w_h}
  w_h(x) 
  &= r(f_h'(x)) + h (x_3 - \tfrac{1}{2}) \big\{ (\partial_1 r \wedge \partial_2 r)(f_h'(x))  
  + (\nabla' r)(f_h'(x)) \varphi(x') \big\} \\
  &\quad + \tfrac{1}{2} h^2 (x_3 - \tfrac{1}{2})^2 d(x'). \notag
\end{align}
(Viewing $r$ and $R = (\partial_1 r, \partial_2, \partial_1 r \wedge \partial_2)$ as functions on $\Omega$ that do not depend on $x_3$ we can replace the arguments $f_h'(x)$ by $f_h(x)$ here.) 
We first identify the absolutely continuous part of $D w_h$ with the help of \eqref{eq:chain-rule-diffeo}. For $i = 1,2$ we compute 
\begin{align*} 
  \partial_i w_h(x) 
  &= \nabla' r (f_h'(x)) \partial_i f_h'(x) \\
  &\quad+ h (x_3 - \tfrac{1}{2}) \big\{ \nabla' (\partial_1 r \wedge \partial_2 r) (f_h'(x)) \partial_i f_h'(x) \\
  &\qquad\qquad\qquad\quad + \nabla'r (f_h'(x)) \partial_i \varphi 
  + \nabla^2 r(f_h'(x)) [\partial_i f_h'(x), \varphi(x')] \big\} \\
  &\quad+ \tfrac{1}{2} h^2 (x_3 - \tfrac{1}{2})^2 \partial_i d(x'), 
\end{align*} 
where $\partial_i f_h'(x) = \mathbf{e}_i - h (x_3 - \tfrac{1}{2}) \partial_i \varphi(x')$, and so 
\begin{align*} 
  \partial_i w_h(x) 
  &= \partial_i r (f_h'(x)) \\ 
  &\quad + h (x_3 - \tfrac{1}{2}) \big\{ \nabla' (\partial_1 r \wedge \partial_2 r) (f_h'(x)) \mathbf{e}_i 
  + \nabla^2 r(f_h'(x)) [\mathbf{e}_i, \varphi(x')] \big\} \\ 
  &\quad - h^2 (x_3 - \tfrac{1}{2})^2 \big\{ \nabla' (\partial_1 r \wedge \partial_2 r) (f_h'(x)) \partial_i \varphi(x') \\ 
  &\qquad \qquad \qquad \qquad + \nabla^2 r(f_h'(x)) [\partial_i \varphi(x'), \varphi(x')] 
  - \tfrac{1}{2} \partial_i d(x') \big\}. 
\end{align*} 
Since $h^{-1} \partial_3 f_h'(x) = - \varphi(x')$, we obtain  
\begin{align*} 
  h^{-1} \partial_3 w_h(x) 
  &= - \nabla' r (f_h'(x)) \varphi(x') 
  + (\partial_1 r \wedge \partial_2 r)(f_h'(x)) + (\nabla' r)(f_h'(x)) \varphi(x') \\ 
  &\quad - h (x_3 - \tfrac{1}{2}) \big\{ \nabla' (\partial_1 r \wedge \partial_2 r) (f_h'(x)) \varphi(x') \\ 
  &\qquad \qquad \qquad \quad + \nabla^2 r(f_h'(x)) [\varphi(x'), \varphi(x')] - d(x') \big\} \\ 
  &= (\partial_1 r \wedge \partial_2 r)(f_h'(x)) \\ 
  &\quad - h (x_3 - \tfrac{1}{2}) \big\{ \nabla' (\partial_1 r \wedge \partial_2 r) (f_h'(x)) \varphi(x') \\ 
  &\qquad \qquad \qquad \quad + \nabla^2 r(f_h'(x)) [\varphi(x'), \varphi(x')] - d(x') \big\}. 
\end{align*} 

Recall that $\partial_{12}^2 r = \partial_{21}^2 r$ by Remark~\ref{rmk:sym-second-derivatives}. We now make use of the relations 
\begin{align*} 
  \partial_i r \cdot \partial_j (\partial_1 r \wedge \partial_2 r) 
  &= \partial_{ij}^2 r \cdot (\partial_2 r \wedge \partial_1 r) 
   = \II_{ij}, \\ 
  (\partial_1 r \wedge \partial_2 r) \cdot \partial_j (\partial_1 r \wedge \partial_2 r) 
  &= 0 \quad \text{as well as} \\ 
  \partial_i r \cdot \partial^2_{ij} r 
  &= 0, 
\end{align*} 
for $i, j \in \{1,2\}$. (These follow from $\partial_j (R^T R) = 0$ and the product rule for bounded $SBV$ functions.) They show that 
\begin{align*} 
  R^T \nabla' (\partial_1 r \wedge \partial_2 r)
  = \begin{pmatrix}
      \II_{11} & \II_{12} \\ 
      \II_{21} & \II_{22} \\ 
      0 & 0 
     \end{pmatrix} 
   \quad\text{and}\quad 
 R^T \nabla^2 r [a, b] 
  = \begin{pmatrix}
      0 \\ 0 \\ - a^T \II b 
     \end{pmatrix} 
\end{align*} 
for $a, b \in \R^2$. So our calculations imply 
\begin{align}\label{eq:Gh-Ah-Bh-def} 
\begin{split}
  R^T(f_h'(x)) \nabla_h w_h(x) 
  &= \Id + h (x_3 - \tfrac{1}{2}) G_h(x) \\ 
  &:= \Id + h (x_3 - \tfrac{1}{2}) A_h(x) + h^2 (x_3 - \tfrac{1}{2})^2 B_h(x) 
\end{split}
\end{align} 
with 
\begin{align*} 
  A_h(x) 
  &=\begin{pmatrix}[1.3]
      \II  (f_h'(x)) & \vline & - \II(f_h'(x)) \varphi(x') + ( R^T(f_h'(x)) d(x') )'  \\ 
     \hline 
      - ( \varphi(x') )^T \II(f_h'(x))  & \vline & (\varphi(x'))^T \II(f_h'(x)) \varphi(x') + ( R^T(f_h'(x)) d(x') )_3    
     \end{pmatrix}, \\ 
  B_h(x) 
  &= \begin{pmatrix}[1.3]
      - \II  (f_h'(x)) \nabla' \varphi(x') + \tfrac{1}{2} ( R^T(f_h'(x)) \nabla' d(x') )' & \vline & 0 \\ 
     \hline 
      (\varphi(x'))^T \II(f_h'(x)) \nabla' \varphi(x')  + \tfrac{1}{2} ( R^T(f_h'(x)) \nabla' d(x') )_3 & \vline & 0    
     \end{pmatrix}. 
\end{align*} 

We note that since $f_h$ is a diffeormorphism on $\Omega$ with $f_h \to \id$ and $\nabla f_h \to \Id$ uniformly as $h \to \infty$ we have that $G_h \to G$ in $L^2(\Omega; \R^{3 \times 3})$, where 
\begin{align*} 
  G(x) 
  = \begin{pmatrix}[1.3]
      \II & \vline & - \II \varphi + ( R^T d )'  \\ 
     \hline 
      - \varphi^T \II & \vline & \varphi^T \II \varphi + ( R^T d )_3    
     \end{pmatrix} (x'). 
\end{align*} 
But then we also have 
\begin{equation}\label{eq:Gh-L1-conv} 
  h^{-2} W \big( \Id + h (x_3 - \tfrac{1}{2}) G_h \big) 
  \to \tfrac{1}{2} Q_3 \big( (x_3 - \tfrac{1}{2}) G \big) 
  = \tfrac{1}{2} (x_3 - \tfrac{1}{2})^2 Q_3(G) 
\end{equation}
in $L^1(\Omega; \R^{3 \times 3})$. To see this, we note first that $W(\Id) = 0$ and $\nabla W(\Id) = 0$. On the one hand we deduce form this that $h^{-2} W (\Id + h (x_3 - \tfrac{1}{2}) G_h) \to Q_3 ((x_3 - \tfrac{1}{2})G)$ in measure by Taylor expanding $W$ around $\Id$. On the other hand, it implies that there exist $c, C > 0$ such that $W(\Id + X) \le C|X|^2$ whenever $|X| \le c$. By the growth condition $W(X) \le C(1 + |X|^2)$ we then also have 
\[ W(\Id + X) 
   \le C|X|^2 \] 
for all $X \in \R^{3 \times 3}$. So 
\[ h^{-2} W \big( \Id  + h (x_3 - \tfrac{1}{2}) G_h(x) \big) 
   \le | G_h(x) |^2 \]
and since $(G_h)$ is convergent in $L^2$ this proves that $h^{-2} W(\Id  + h G_h)$ is uniformly integrable. So \eqref{eq:Gh-L1-conv} follows form the Vitali convergence theorem.

Now we consider the jump part $J_{w_h}$ of $w_h$. We view $r$ and $R = (\nabla'r, \partial_1 \wedge \partial_2 r)$ as $SBV$ functions in $\Omega$ that are independent of $x_3$ with (cylindrical) jump sets $J_r, J_R \subset \Omega$, respectively, and a normal $\nu = (\nu',\nu_3) = (\nu',0)$ given $\mathcal{H}^2$-a.e.\ on $J_r \cup J_R$. Then (cp.\ Section~\ref{sec:NotationPreliminaries}) the jump set of $w_h$ satisfies 
\[ J_{w_h} \subset f_h^{-1}(J_r \cup J_R) \]
and a normal field on $J_{w_h}$ is given by 
\[ \nu(w_h) (x) 
   = \frac{(\nabla f_h(x))^T \nu(f_h(x))}{|(\nabla f_h(x))^T \nu(f_h(x))|} 
   \qquad \text{for } \mathcal{H}^2\text{-a.e. } x \in J_{w_h},  
\]
see \eqref{eq:normal-diffeo}, where $\nabla f_h(x) = \Id - 
   h \begin{pmatrix}
      (x_3 - \tfrac{1}{2}) \nabla' \varphi(x') & \vline & \varphi(x') \\ 
     \hline 
     0 & \vline & 0   
     \end{pmatrix}$.  
It follows that 
\begin{align*}
  &(\nabla f_h(x))^T \nu(f_h(x)) \\ 
  &~~= \big( \nu'(f_h(x))- h (x_3 - \tfrac{1}{2}) (\nabla' \varphi(x'))^T \nu'(f_h(x)), - h \varphi(x') \cdot \nu'(f_h(x)) \big) 
\end{align*}
and so, since $|\nu'| = |\nu| = 1$, 
\begin{align*}
  \nu(w_h) (x) 
  &= \big( \nu'(f_h(x)) + O(h), - h \varphi(x') \cdot \nu'(f_h(x)) + O(h^2) \big). 
\end{align*}
Rescaling and making use of $\varphi(x') = \varphi(f_h(x)) + O(h)$, when extended to $\Omega$ as a function independent of $x_3$, we arrive at
\begin{align}\label{eq:normal-wh}
  \nu_h(w_h) (x) 
  &= \big( \nu'(f_h(x)), - \varphi(f_h(x)) \cdot \nu'(f_h(x)) \big) 
  + O(h) 
\end{align}
for $\mathcal{H}^{2}$-a.e.\ $x \in J_{w_h}$. 

We finally also view $D$ as a (cylindrical) set of finite perimeter in $\Omega$ and define $E_h = f_h^{-1} (D)$. Again we denote by $\nu = (\nu',0)$ the unit outer normal to $D$. As $\chi_{E_h}(x) = \chi_D(f_h(x))$, the same reasoning that led to \eqref{eq:normal-wh} now shows that $\partial^* E_h \cap \Omega = f_h^{-1}(\partial^* D) \cap \Omega$ and that the rescaled unit outer normal to $E_h$ is given by 
\begin{align*}
  \nu_h(E_h) (x) 
  &= \big( \nu'(f_h(x)), - \varphi(f_h(x)) \cdot \nu'(f_h(x)) \big) 
  + O(h) 
\end{align*}
for $\mathcal{H}^{2}$-a.e.\ $x \in \partial^*E_h \cap \Omega$. 

Now we notice that both 
\begin{align}\label{measure-fh-est}
\begin{split}
  f_h \# \mathcal{H}^2 \restrict J_{w_h} 
  &\le (1 + O(h)) \mathcal{H}^2 \restrict (J_r \cup J_R) \quad \text{and} \\ 
  f_h \# \mathcal{H}^2 \restrict \partial^* E_h 
  &\le (1 + O(h)) \mathcal{H}^2 \restrict \partial^* D. 
\end{split}
\end{align}
This follows since for any Borel set $A \subset \Omega$ and $S_h = J_{w_h}$ or $S_h = \partial^* E_h$ 
\begin{align*}
  f_h \# \mathcal{H}^2 \restrict S_h (A) 
  &= \mathcal{H}^2 \big(S_h \cap f_h^{-1}(A) \big) \\ 
  &= \mathcal{H}^2 \big( f_h^{-1} \big( f_h(S_h) \cap A \big) \big) \\ 
  &\le (1 + O(h)) \mathcal{H}^2 \big( f_h(S_h) \cap A \big)
\end{align*}
where $f_h(S_h) \subset J_r \cup J_R$, respectively, $f_h(S_h) = \partial^* D$ and we have used the fact that $f_h^{-1}$ is a Lipschitz mapping with Lipschitz constant bounded by $1 + O(h)$. 

Combining \eqref{eq:normal-wh}, \eqref{eq:normal-wh} and \eqref{measure-fh-est} we find 
\begingroup
\allowdisplaybreaks
\begin{align*}
  &2 \int_{J_{w_h} \cap E_h^0} \psi \big( \nu_h(w_h)(x) \big) \, \mathrm d \mathcal{H}^2(x) 
  + \int_{\Omega \cap \partial^* E_h} \psi \big( \nu_h(E_h)(x) \big) \, \mathrm d \mathcal{H}^2(x) \\ 
  &~~= 2 \int_{\Omega \cap E_h^0} \psi \big( \nu'(f_h(x)), - \varphi(f_h(x)) \cdot \nu'(f_h(x)) \big) \, \mathrm d \mathcal{H}^2 \restrict J_{w_h }(x) \\ 
  &\qquad + \int_{\Omega} \psi \big( \nu'(f_h(x)), - \varphi(f_h(x)) \cdot \nu'(f_h(x)) \big) \, \mathrm d \mathcal{H}^2 \restrict \partial^* E_h(x)
  + O(h) \\ 
  &~~= 2 \int_{\Omega \cap D^0} \psi \big( \nu'(y), - \varphi(y) \cdot \nu'(y) \big) \, \mathrm d f_h \# \mathcal{H}^2 \restrict J_{w_h} (y) \\ 
  &\qquad +  \int_{\Omega} \psi \big( \nu'(y), - \varphi(y) \cdot \nu'(y) \big) \, \mathrm d f_h \# \mathcal{H}^2 \restrict \partial^* E_h (y) 
  + O(h) \\
  &~~\le 2 \int_{\Omega \cap D^0} \psi \big( \nu'(y), - \varphi(y) \cdot \nu'(y) \big) \, \mathrm d \mathcal{H}^2 \restrict (J_r \cup J_R) \\ 
  &\qquad +  \int_{\Omega} \psi \big( \nu'(y), - \varphi(y) \cdot \nu'(y) \big) \, \mathrm d \mathcal{H}^2 \restrict \partial^* D
  + O(h),  
\end{align*}
\endgroup
where we have used $E_h^0 = f_h^{-1} (D^0)$ in the second step. 

Together with \eqref{eq:Gh-Ah-Bh-def}, \eqref{eq:Gh-L1-conv}, the fact that $\chi_{E_h} \to \chi_D$ boundedly in measure and the frame invariance of $W$ we finally find that 
\begin{align*}
  &\limsup_{h \to 0} \bigg[ h^{-2}\int_{\Omega \setminus E_h} W(\nabla_h w_h(x)) \, \mathrm{d}x \\
  &\qquad\qquad
  + 2 \int_{J_{w_h} \cap E_h^0} \psi \big( \nu_h(w_h)(x) \big) \, \mathrm d \mathcal{H}^2(x) + \int_{\Omega \cap \partial^* E_h} \psi \big( \nu_h(E_h)(x) \big) \, \mathrm d \mathcal{H}^2(x) \bigg] \\  
  &~~\le \int_{\Omega \setminus D} \tfrac{1}{2} (x_3 - \tfrac{1}{2})^2 Q_3(G(x)) \, \mathrm{d}x 
  + 2 \int_{(J_r \cup J_R) \cap D^0} \psi \big( \nu'(x), - \varphi(x) \cdot \nu'(x) \big) \, \mathrm d \mathcal{H}^2 \\ 
  &\qquad \qquad +  \int_{\partial^* D \cap \Omega} \psi \big( \nu'(x), - \varphi(x) \cdot \nu'(x) \big) \, \mathrm d \mathcal{H}^2 \\ 
  &~~= \frac{1}{24} \int_{\omega \setminus D} Q_3(G(x')) \, \mathrm{d}x 
  + 2 \int_{(J_r \cup J_R) \cap D^0} \psi \big( \nu'(x'), - \varphi(x' \cdot \nu'(x') \big) \, \mathrm d \mathcal{H}^1 (x) \\ 
  &\qquad \qquad +  \int_{\partial^* D \cap \omega} \psi \big( \nu'(x'), - \varphi(x') \cdot \nu'(x') \big) \, \mathrm d \mathcal{H}^1 (x),  
\end{align*}
where we passed to the planar setting in the last step by integrating with respect to $x_3$. 

We abbreviate $S = J_r \cup J_R \cup (\partial^* D \cap \omega)$ and select $\bar{\nu}_3 \in L^{\infty}_{\mathcal{H}^2 \restrict S}(\omega)$ such that 
\[ \psi(\nu'(y), \bar{\nu}_3(y)) 
   = \psi_0(\nu'(y)) 
   \quad\text{for $\mathcal{H}^1$-a.e.\ } y \in S. \] 
Given $\eps > 0$ we then choose $\varphi$ such that 
\[ \int_S | \bar{\nu}_3(y') + \varphi(y') \cdot \nu'(y')| \, \mathrm d\mathcal{H}^1
   \le \tfrac{1}{3} \eps. \] 
(Let $\mu \in C^{\infty}_c(\omega; \R^2)$ with $\|\mu\|_\infty \le 2$ such that $\int_{\omega} |\nu' - \mu| \, \mathrm d\mathcal{H}^1 \restrict S < \eps / (6\| \bar{\nu}_3 \|_{L^\infty})$ and $\theta \in C^{\infty}_c(\omega)$ such that $\int_{\omega} |\bar{\nu}_3 - \theta| \, \mathrm d \mathcal{H}^1 \restrict S < \eps / 12$, set $\varphi = - \theta \mu$ and notice $| \bar{\nu}_3 - \theta \mu \cdot \nu'| \le |\bar{\nu}_3 (\nu' - \mu) \cdot \nu'| + |(\bar{\nu}_3 - \theta) \mu \cdot \nu'|$.) Then we choose $d$ such that 
\[ \int_\omega |Q_3(G(x')) - Q_2(\II(x'))| \, \mathrm d x 
\le \tfrac{1}{3} \eps. \]
With these choices we have
\begin{align}\label{eq:whEh-est}
    &\limsup_{h \to 0} \bigg[ h^{-2} \int_{\Omega \setminus E_h} W(\nabla_h w_h(x)) \, \mathrm{d}x \nonumber \\
  &\qquad\qquad
  + 2 \int_{J_{w_h} \cap E_h^0} \psi \big( \nu_h(w_h)(x) \big) \, \mathrm d \mathcal{H}^2(x) + \int_{\Omega \cap \partial^* E_h} \psi \big( \nu_h(E_h)(x) \big) \, \mathrm d \mathcal{H}^2(x) \bigg] \nonumber \\  
  &~~\le \frac{1}{24} \int_{\omega \setminus D} Q_3(\II(x')) \, \mathrm{d}x \\ 
  &\qquad \qquad 
  + 2 \int_{(J_r \cup J_R) \cap D^0} \psi_0 ( \nu'(x') ) \, \mathrm d \mathcal{H}^1 
  +  \int_{\partial^* D \cap \omega} \psi_0 ( \nu'(x') ) \, \mathrm d \mathcal{H}^1 + \eps. \nonumber 
\end{align}

In a second step we can now construct recovery sequences for $(y, D)$. To this end we apply the Relaxation Theorem~\ref{theo:bulk-relax} to the functional $\mathcal{E}_{h} : W^{1,2}(\Omega; \mathbb{R}^3) \times \mathcal{F}(\Omega) \rightarrow  \mathbb{R}$ for fixed $h$, which can be written as 
\begin{align*}
  \mathcal{E}_{h}(y,D) 
  = \int_{\Omega \setminus D} W_h(\nabla y) \, \mathrm{d}x  
   + \int_{\Omega \cap \partial^* D} \psi_h (\nu(D)) \, \mathrm{d}\mathcal{H}^2,  
\end{align*}
where $W_h(X) = h^{-2} W(X', h^{-1} X_{\cdot 3})$ and $\psi_h(x) = \psi(x', h^{-1} x_3)$. Theorem~\ref{theo:bulk-relax}(ii) provides a sequence $(w_{h,k}, E_{h,k}) \in W^{1,2}(\Omega; \R^{3}) \times \mathcal{F}(\Omega)$ such that $w_{h,k} \to w_h$ in $L^2(\Omega;\R^3)$, $\| w_{h,k} \|_{L^\infty} \to \| w_h \|_{L^\infty}$, $\chi_{E_{h,k}} \to \chi_{E_h}$ in $L^1(\Omega)$ and 
\begin{align*}
  \limsup_{k \to \infty} \mathcal{E}_{h}(w_{h,k}, E_{h,k}) 
  &\le \int_{\Omega \setminus E_h} W(\nabla_h w_h) \, \mathrm{d}x 
  + 2 \int_{J_{w_h} \cap E_h^0} \psi \big( \nu_h(w_h) \big) \, \mathrm d \mathcal{H}^2 \\ 
  &\qquad + \int_{\Omega \cap \partial^* E_h} \psi \big( \nu_h(E_h) \big) \, \mathrm d \mathcal{H}^2.  
\end{align*}
Now we can choose a diagonal sequence $y_h = w_{h, k_h}$, $D_h = E_{h, k_h}$ such that 
\begin{align*}
  \limsup_{h \to 0} \mathcal{E}_{h}(y_h, D_h) 
  &\le\frac{1}{24} \int_{\omega \setminus D} Q_3(\II(x')) \, \mathrm{d}x 
  + 2 \int_{(J_r \cup J_R) \cap D^0} \psi_0 ( \nu'(x') ) \, \mathrm d \mathcal{H}^1 \\ 
  &\qquad +  \int_{\partial^* D \cap \omega} \psi_0 ( \nu'(x') ) \, \mathrm d \mathcal{H}^1 + \eps. 
\end{align*}
As $\eps > 0$ was arbitrary, the construction is complete. 

\medskip 

We finally show that, in case $(y, D) \in  SBV^{2,2}_{\rm iso}(\omega) \times \mathcal{F}(\omega)$ satisfies \eqref{eq: Minkowski content} (with $ r $ replaced by $ y $), the recovery sequence $(y_h, D_h)$ can be chosen such that $(D_h)$  satisfies the rescaled $ \psi $-minimal droplet assumption \eqref{eq:min-drop-D} with a universal function $\zeta_0$. We abbreviate $ J = J_{(r , \nabla r)} $ and define 
$$F_t = \{ x \in \mathbb{R}^2 : \dist_{\psi_0^\circ}(J \cup D, x) \leq s \} \quad \textrm{for $ s > 0 $}.$$ 
Let $ \epsilon > 0 $ and we apply \eqref{eq: Minkowski content} to choose $ 0 < \tau < \epsilon $ so that
\begin{align*} 
  \frac{\mathcal{L}^2 (\omega \cap F_{\tau} \setminus D)}{\tau} 
  \le 2 \int_{J \cap D^0} \psi_0 (\nu(J)) \, \mathrm{d}\mathcal{H}^1
  + \int_{\omega \cap \partial^* D} \psi_0 (\nu(D)) \, \mathrm{d}\mathcal{H}^1 + \eps. 
\end{align*}
Applying the anisotropic coarea formula in \eqref{eq: Coarea} with $ u = -\dist_{\psi_0^\circ}(J \cup D, \cdot) $ and noting that $ \psi_0(\nabla \dist_{\psi_0^\circ}(J \cup D,x)) = 1 $ for $ \mathcal{L}^2 $ a.e.\ $ x \in \mathbb{R}^2 \setminus \overline{J \cup D} $, we infer that 
\begin{align*}
  \mathcal{L}^2 (\omega \cap F_\tau \setminus D) 
  \ge \int_{\omega \cap F_\tau \setminus D}\psi_0(\nabla \dist_{\psi_0^\circ}(J \cup D,\cdot))\, \mathrm{d}\mathcal{L}^2
  = \int_{0}^{\tau} \psi_0(D \rchi_{F_t})(\omega)\,\mathrm{d}t 
\end{align*}
and we conclude that $\psi_0(D \rchi_{F_t})(\omega) \le \tau^{-1} \mathcal{L}^2 (\omega \cap F_\tau \setminus D)$ for uncountably many $t \in (0, \tau)$.  We fix one of those $ t $, which in addition satisfies $\mathcal{H}^1(\partial^* F_t \cap \partial \omega) = 0$. (This expression can be positive for at most countably many values of $t$.)  Approximating the set $ F_t $ by a sequence of sets with smooth boundaries (see \cite[Theorem~2.5]{SantilliSchmidt:21}), we may choose an open set $F'_t \subset \mathbb{R}^2$ with smooth boundary such that $F_t \subset F'_t \subset F_\tau$, $ \mathcal{H}^1(\partial F'_t \cap \partial \omega) =0 $ and
\begin{align}\label{eq:Dt-D-est}
\begin{split}
  \int_{\omega \cap \partial F'_t} \psi_0(\nu(F'_t)) \, \mathrm{d}\mathcal{H}^1 
  &\le \int_{\omega \cap \partial^* F_t} \psi_0(\nu(F_t)) \, \mathrm{d}\mathcal{H}^1 + \eps \\ 
  &\le 2 \int_{J \cap D^0} \psi_0 (\nu(J)) \, \mathrm{d}\mathcal{H}^1
  + \int_{\omega \cap \partial^* D} \psi_0 (\nu(D)) \, \mathrm{d}\mathcal{H}^1 + 2 \eps. 
\end{split}
\end{align}
We recall the maps $ f_h $ and $w_h $ from \eqref{eq: f_h} and \eqref{eq: w_h}, we define the sets $$E_{t,h} = f_h^{-1}( F'_t \times (0,1))$$ and we notice that $\mathcal{H}^2(\partial E_{t,h} \cap \partial \Omega) = 0$ and $J_{w_h} \subset E_{t,h} \cap \Omega$ by Remark \ref{rmK: coordinate changes SBV}. By possibly multiplying with smooth cut-off functions that vanish on $J_{w_h}$ but are equal to $1$ outside $E_{t,h}$ and such that still $w_h \to r$ in $L^1$, we may assume without loss of generality that $w_h \in W^{1,2}(\Omega;\R^3)$. The arguments leading to \eqref{eq:whEh-est} now imply that 
\begin{align}
  \limsup_{h \to 0} h^{-2} \int_{\Omega \setminus E_{t,h}} W(\nabla_h w_h(x)) \, \mathrm{d}x 
  &\le \frac{1}{24} \int_{\omega \setminus D} Q_3(\II(x')) \, \mathrm{d}x + \eps \label{eq:ela-est}
\shortintertext{and} 
  \limsup_{h \to 0} \int_{\partial^* E_{t,h} \cap \Omega} \psi \big( \nu_h(E_{t,h})(x) \big) \, \mathrm d \mathcal{H}^2(x) 
  &\le 
  \int_{\partial^* F'_t \cap \omega} \psi_0 ( \nu'(x') ) \, \mathrm d \mathcal{H}^1 + \eps. \label{eq:Eth-Dt-est}
\end{align}

In order to check for the minimal droplet assumption we rescale the sets $E_{t,h}$ and also shift the plate from $\Omega_h$ to the $x_3$-symmetric domain $\Omega^\ast_h = \omega \times (-\frac{h}{2}, \frac{h}{2})$. We consider the ($h$ independent) function $g : \R^3 \to \R^3$, 
\[ g(x) 
    = \big( x' - x_3 \varphi(x'), x_3 \big)
\]
and observe that $ g(x', h(x_3 - \frac{1}{2})) = f_h(x) $ for $ x \in \mathbb{R}^3 $, $ g(\Omega^{\ast}_h) \subset \Omega^{\ast}_h $ and there is an $h_0 > 0$ such that $g : \mathbb{R}^2 \times (-\frac{h}{2}, \frac{h}{2}) \rightarrow \mathbb{R}^2 \times (-\frac{h}{2}, \frac{h}{2}) $ is a diffeomorphism for each $0 < h \le h_0$ (as $ f_h $ is a diffeomorphism for each $ 0 < h \leq h_0 $). We define 
\[ G_{t,h} 
    = g^{-1} \big( F'_t \times (-\tfrac{h}{2}, \tfrac{h}{2}) \big) = \bigg\{\Big(x', h\Big(x_3- \frac{1}{2}\Big)\Big) : x \in E_{t,h}\bigg\}. 
\]
For every $\frac{h_0}{2} \le a < b \le \frac{h_0}{2}$ we notice that
$$ g^{-1}((\partial F'_t \cap \omega) \times (a,b)) = \partial G_{t, h_0} \cap (\omega \times (a,b)) $$ 
and we infer that there exists a constant $c_0 > 0$ (depending only on $ g $ and $ \psi $) such that 
\[ c_0^{-1} (b - a) \mathcal{H}^1 (\omega \cap \partial F'_t) 
   \le \psi(D\rchi_{G_{t,h_0}}) (U_{a,b}) 
   \le c_0 (b - a) \mathcal{H}^1 (\omega \cap \partial F'_t). \]
It follows that
\begin{align}\label{eq:x3-dep-meas} 
  \int_{\Omega_{h+chs}^\ast \cap \partial G_{t,h_0}} \psi(\nu(G_{t,h_0})) \, \mathrm{d}\mathcal{H}^2 
  \le (1 + c_0^2 c s) \int_{\Omega_{h}^\ast \cap \partial G_{t,h_0}} \psi(\nu(G_{t,h_0})) \, \mathrm{d}\mathcal{H}^2
\end{align}
for every $ c > 0 $ and for every $ 0 < s < 1 $.

For each $0 < \eta < 1$ let $\psi_\eta$ be a uniformly convex and smooth norm with 
\[ (1 + \eta)^{-1} \psi_\eta 
    \le \psi 
    \le (1 + \eta) \psi_\eta.
\]
We denote the tubular neighborhood of a set $X$ of radius $ r > 0 $ with respect to $ \psi_\eta^\circ $ by $B^\eta_r(X)$.  
Since $G_{t,h_0}^{(sh)}  \subset B^\eta_{(1+\eta)sh}(G_{t,h_0})$ and $G_{t,h} \cap (\Omega^\ast_h)^- = G_{t, h_0} \cap (\Omega^\ast_h)^-$, we can apply the inequality in \eqref{eq: Steiner inequality} in the Appendix \ref{appendix:Steiner} with $ G = G_{t, h_0} $ and $ \phi = \psi_\eta $ to infer for $ 0 < h < h_0 $ that
\begin{align*} 
 & \mathcal{L}^3 \big( (G_{t,h}^{(sh)} \setminus G_{t,h}) \cap (\Omega_h^\ast)^- \big) \\
  & \qquad \leq \mathcal{L}^3 \big( (G_{t,h_0}^{(sh)} \setminus G_{t,h_0}) \cap (\Omega_h^\ast)^- \big)  \\
  & \qquad  \leq \mathcal{L}^3 \big( (B^\eta_{(1+\eta)sh}(G_{t,h_0}) \setminus G_{t,h_0}) \cap (\Omega_h^\ast)^- \big) \\
  &\qquad \le \bigg( 1 + \frac{(1+\eta)sh}{\gamma_1(\psi_\eta) \gamma_2(G_{t,h_0})} \bigg)^{2} (1+\eta)sh \int_{B^\eta_{(1+\eta)sh}((\Omega_h^\ast)^-)
  \cap \partial G_{t,h_0}} \psi_\eta(\nu(G_{t,h_0})) \, \mathrm{d}\mathcal{H}^2 \\ 
  &\qquad \le \big( 1 +  \bar{C}(\eta)C(t)  sh + 3 \eta \big) sh \int_{\Omega_{h + c_1hs}^\ast \cap \partial G_{t,h_0}} \psi(\nu(G_{t,h_0})) \, \mathrm{d}\mathcal{H}^2
\end{align*}
for every $ 0 < s < s_0 $, where $ c_1 > 0 $ and $ 0 < s_0 < 1 $ are constant depending on $ \psi $, and for a $t$-dependent constant $C(t)$ and an $\eta$-dependent constant $\bar{C}(\eta)$.  Combining with \eqref{eq:x3-dep-meas} we get 
\begin{align*} 
  \mathcal{L}^3 \big( (G_{t,h}^{(sh)} \setminus G_{t,h}) \cap (\Omega_h^{\rm s})^- \big) 
  &\le \big( 1 + c_2 s +  c_2 C(t)  \bar{C}(\eta) sh + 3 \eta\big) sh \int_{\Omega_{h}^{\ast}\cap\partial^* G_{t,h}} \psi(\nu(G_{t,h})) \, \mathrm{d}\mathcal{H}^2
\end{align*}
for every $ 0 < s < s_0 $ and for a constant $ c_2 > 0 $ depending on $ \psi $ and $ g $.
Choosing $\eta = \eta(h) \searrow 0$ so slowly that $\bar{C}(\eta(h)) h \to 0$ we see that these sets satisfy the minimal droplet assumption for a suitable $\zeta_0$.

It follows from \eqref{eq:Eth-Dt-est}, \eqref{eq:Dt-D-est} and \eqref{eq:ela-est} that 
\begin{align*}
  \limsup_{\tau \to 0} \limsup_{h \to 0} \mathcal{E}_h(w_h, E_{t,h}) 
  \le \frac{1}{24} \int_{\omega \setminus D} Q_3(\II(x')) \, \mathrm{d}x 
  + \int_{\partial^* D_t \cap \omega} \psi_0 ( \nu'(x') ) \, \mathrm d \mathcal{H}^1 
  + 4 \eps. 
\end{align*} 
Since $\eps > 0$ was arbitrary we may pass to a diagonal sequence $(y_h, D_h) \in W^{1,2}(\Omega; \mathbb{R}^3) \times \mathcal{F}(\Omega)$ by choosing $\tau = \tau_h \to 0$ sufficiently slowly such that $y_h \to r$ in $L^1(\Omega;\R^3)$, $\chi_{D_h} \to \chi_D$ in $L^1(\Omega)$ and $\limsup_{h \to 0} \mathcal{E}_h(y_h, D_h) \le \mathcal{E}(r,D)$. 
\hfill $\Box$

\begin{appendix}

\section{Auxiliary results}

\subsection*{Minkowski content}

Let $ \phi $ be an arbitrary norm on $ \mathbb{R}^n $ and $\phi^\circ$ its dual norm. The set $\mathcal{W}^\phi = \{ x \in \mathbb{R}^n : \psi^\circ(x) \leq 1 \}$ is a centrally symmetric convex body called the Wulff shape of $ \phi $. If $ A \subset \mathbb{R}^n $ we set $ B^\phi_r(A) = \{x : \mathbb{R}^n : \dist_\phi(x, A) \leq r \} $ for every $ r > 0 $.

We state a formula for the outer Minkowski content of sufficiently regular sets. In the global anisotropic setting this has been obtained in \cite[Theorem~4.4]{LussardiVilla:16}.  

\begin{theorem}\label{theo:Minkowski-density-lb} 
	Suppose $E \subset \R^n$ is a closed set such that $\partial E$ is $\mathcal{H}^{n-1}$ rectifiable and satisfies the density condition 
	\begin{align*}
		\mu(B_r(x)) \ge \gamma r^{n-1} 
		\quad \forall\, x \in \partial E ~ \forall\, r \in (0,1) 
	\end{align*} 
	for a constant $\gamma > 0$ and a finite measure $\mu$ on $\R^n$ with $\mu \ll \mathcal{H}^{n-1}$. If $A \subset \R^n$  is a Borel set with $\mathcal{H}^{n-1}(\partial A \cap \partial E) = 0$, then 
	\begin{align*}
		&\lim_{r \to 0} \frac{\mathcal{L}^n \big( ( B^\phi_r(E)  \setminus E) \cap A \big)}{r} \\ 
		&~~=2 \int_{\partial E \cap E^0 \cap A} \phi (\nu(\partial E)) \, \mathrm{d}\mathcal{H}^{n-1}
		+ \int_{\partial^* E \cap A} \phi (\nu(E)) \, \mathrm{d}\mathcal{H}^{n-1}. 
	\end{align*}
\end{theorem}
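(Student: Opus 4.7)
I would reformulate the left-hand side via the anisotropic coarea formula applied to the $\phi$-distance function and reduce the claim to identifying the weak-$*$ limit of the anisotropic perimeter measures of the sublevel sets of that distance. The identification would then be carried out via a blow-up argument on $\partial E$, with the matching upper bound imported from the global Minkowski content formula of Lussardi--Villa referenced in the paper.

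\textbf{Step 1 (Coarea reformulation).} Set $d(x) = \operatorname{dist}_\phi(x, E)$. This is $1$-Lipschitz with $\nabla d = 0$ a.e.\ on $E$ and $\phi^\circ(\nabla d) = 1$ a.e.\ on $E^c$ (the standard Eikonal identity for the $\phi$-distance). Applying the anisotropic coarea formula \eqref{eq: Coarea} with norm $\phi^\circ$ to the Lipschitz truncation $d \wedge r$ gives
\[
	\mathcal L^n\bigl((B^\phi_r(E) \setminus E) \cap A\bigr)
	= \int_0^r \phi^\circ\bigl(D \chi_{\{d < t\}}\bigr)(A)\,\mathrm d t.
\]
Dividing by $r$ and invoking Lebesgue differentiation reduces the theorem to computing the right-continuous limit at $0$ of the finite Radon measures $\mu_t := \phi^\circ(D \chi_{\{d<t\}})$ applied to $A$.

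\textbf{Step 2 (Blow-up and factor $2$).} The open sets $\{d<t\}$ decrease monotonically to $E$ as $t \searrow 0$. By $\mathcal H^{n-1}$-rectifiability of $\partial E$, at $\mathcal H^{n-1}$-a.e.\ $x_0 \in \partial E$ an approximate tangent hyperplane $T_{x_0}$ with normal $\nu(x_0)$ exists; the blow-up of $E$ at $x_0$ is a half-space if $x_0 \in \partial^* E$ and is $T_{x_0}$ itself if $x_0 \in \partial E \cap E^0$. A direct computation of the $\phi$-distance to a hyperplane shows that the anisotropic perimeter $\mu_t$ per unit $\mathcal H^{n-1}$-area of $T_{x_0}$ carries local density $\phi(\nu(x_0))$ in the first case and $2\phi(\nu(x_0))$ in the second, the factor $2$ arising from the two parallel sheets of $\partial\{d<t\}$ surrounding $T_{x_0}$. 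The density condition $\mu(B_r(x)) \ge \gamma r^{n-1}$ rules out pathological accumulation of $\partial E$ near $x_0$ that could invalidate this local picture.

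\textbf{Step 3 (Bounds and transfer to $A$).} The lower bound $\liminf_{t\searrow 0} \mu_t(U) \ge \mu_E(U)$ for every open $U$, with
\[
	\mu_E := 2\phi(\nu(\partial E))\,\mathcal H^{n-1}\restrict (\partial E \cap E^0)
	+ \phi(\nu(E))\,\mathcal H^{n-1}\restrict \partial^* E,
\]
follows from Step 2 via a Besicovitch-type covering argument combined with lower semicontinuity of anisotropic perimeter. The matching upper bound is supplied by the global Minkowski content formula of Lussardi--Villa \cite[Theorem 4.4]{LussardiVilla:16}, which pins down $\mu_t(\R^n) \to \mu_E(\R^n)$ and thereby upgrades the liminf to equality on open sets with $\mu_E$-null boundary; this gives weak-$*$ convergence $\mu_t \wsto \mu_E$. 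Since $\mathcal H^{n-1}(\partial A \cap \partial E) = 0$ implies $\mu_E(\partial A) = 0$, the Portmanteau theorem yields $\mu_t(A) \to \mu_E(A)$, and combining with Step 1 proves the claim. I expect the main difficulty to lie in the crack-point blow-up of Step 2: the density hypothesis on $\mu$ is essential to prove rigorously that the two sheets of $\{d<t\}$ around $T_{x_0}$ separate cleanly and that no additional pieces of $\partial E$ contaminate the local contribution, so that exactly the factor $2$ emerges.
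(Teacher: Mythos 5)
Your proposal is correct and takes essentially the same route as the paper, which obtains the statement by localizing the global formula of \cite[Theorem~4.4]{LussardiVilla:16}: a local lower estimate at rectifiable points of $\partial E$ (your Steps 1--2, the paper's citation of \cite[Eq.~(4.8)]{LussardiVilla:16}) is combined with the convergence of total masses from the global result and the hypothesis $\mathcal{H}^{n-1}(\partial A \cap \partial E)=0$ to pass to the Borel set $A$ (your Step 3), exactly as in \cite[Lemma~3.7]{SantilliSchmidt:21}. One bookkeeping point: with the appendix's literal definition $B^\phi_r(E)=\{\dist_\phi(\cdot,E)\le r\}$ your coarea/blow-up computation produces densities $\phi^\circ(\nu)$ rather than $\phi(\nu)$; the stated densities correspond to the convention $B^\phi_r(E)=E+r\mathcal{W}^\phi=\{\dist_{\phi^\circ}(\cdot,E)\le r\}$ used elsewhere in the paper, so you should fix that convention at the outset.
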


The proof follows from the global result \cite[Theorem~4.4]{LussardiVilla:16} (for $A = \R^n$) and a local version of the  lower estimate \cite[Eq.~(4.8)]{LussardiVilla:16}. Details are contained in the proof of \cite[Lemma~3.7]{SantilliSchmidt:21}.

\subsection*{Anisotropic Steiner formula}\label{appendix:Steiner}

Suppose now $ \phi \in C^2(\R^n \setminus \{0\}) $ is a uniformly convex norm. Then the dual norm $ \phi^\circ $ is uniformly convex as well and the Wulff shape $\mathcal{W}^\phi$ is a centrally symmetric uniformly convex body. 

Suppose $ G \subset \mathbb{R}^n $ is an open set with $ C^2 $-boundary. The exterior $ \phi $-anisotropic normal of $ G $ is the map $ \nu^\phi(G): \partial G \rightarrow \partial \mathcal{W}^\phi $ defined as
\begin{equation*}
\nu^\phi(G)(x) = \nabla \phi(\nu(G)(x)) = \qquad \textrm{for $ x \in \partial G $}.
\end{equation*}
One observes that the tangential derivative $ D \nu^\phi(G)(x) $ of $ \nu^\phi_G $ at $ x $ is an endomorphism of $ \Tan(\partial G, x) $ (see \cite[Remark 2.25]{DeRosaKolasinskiSantilli}) that has $ n $ (counted with multiplicity) eigenvalues $ \kappa^\phi_{G,1}(x) \leq \ldots \leq \kappa^\phi_{G,n-1}(x) $. These numbers are the $ \phi $-principal curvatures of $ \partial G $ at $ x $ (with respect to the anisotropic exterior normal $ \nu^\phi(G)(x) $); see \cite[Definition 2.26]{DeRosaKolasinskiSantilli}. We define the positive continuous functions
\begin{alignat*}{2}
    \rho^\phi_G(x) 
    &= \sup\big\{  s > 0 : \dist_\phi\big(x - s \nu^\phi(G)(x), \mathbb{R}^n \setminus G\big) = s \big\} \quad  
     & & \textrm{for $ x \in \partial G $,}\\ 
	\rho_G(x) 
	&= \sup\big\{  s > 0 : \dist\big(x - s \nu(G)(x), \mathbb{R}^n \setminus G\big) = s \big\} \quad 
	 & & \textrm{for $ x \in \partial G $,}\\
    \rho^\phi(\eta) 
    &= \sup\big\{s > 0:  \dist_\phi\big(\eta - s \nu^\phi(B_1)(\eta), \mathbb{R}^n \setminus B_1\big) =s \big\} \quad 
     & & \textrm{for $ \eta \in \partial B_1 $}
\end{alignat*}
and we observe by a scaling argument that 
\begin{equation*}
		\inf_{x \in \partial G} \rho^\phi_G(x) \geq \Big( \inf_{x \in \partial G} \rho_G(x) \Big) \cdot \Big( \inf_{\eta \in \partial B_1} \rho^\phi(\eta)\Big).
\end{equation*}
Setting $ \gamma(\phi, G) = \inf_{x \in \partial G} \rho^\phi_G(x) $, $ \gamma_1(\phi) = \inf_{\eta \in \partial B_1} \rho^\phi(\eta) $ and $\gamma_2(G) = \inf_{x \in \partial G} \rho_G(x) $, we use \cite[Lemma 2.34]{DeRosaKolasinskiSantilli} to conclude that 
\begin{equation*}
	\kappa^\phi_{G,i}(x) \leq \frac{1}{\gamma(\phi, G)} \leq \frac{1}{\gamma_1(\phi) \gamma_2(G)} \qquad \textrm{for every $ x \in \partial G $ and for every $ i = 1, \ldots , n $.}
\end{equation*}
Now for every Borel set $ E \subset \mathbb{R}^n $ we can use the disintegration formula \cite[Theorem 3.18]{HugSantilli}  to obtain
\begin{flalign}\label{eq: Steiner inequality}
	\mathcal{L}^n\big((B^\phi_r(G) \setminus G) \cap E\big) & \leq \int_{\partial G} \phi(\nu(G)(x))\int_{0}^r \rchi_E(x + t \nu^\phi(G)(x))\prod_{i=1}^{n-1}(1 + t\kappa^\phi_{G,i}(x))\, \mathrm{d}t\, \mathrm{d}\mathcal{H}^n(x) \notag \\
	& \leq r\cdot \bigg( 1 + \frac{r}{\gamma_1(\phi) \gamma_2(G)}\bigg)^{n-1}\cdot\int_{B^\phi_r(E) \cap \partial G} \phi(\nu(G)(x))\, \mathrm{d}\mathcal{H}^n(x)
\end{flalign}
for every $ r > 0 $.

\subsection*{Bulk materials with soft inclusions}

We state here a version of a relaxation result for bulk materials from \cite{BraidesChambolleSolci:07,SantilliSchmidt:21} adapted to our needs. Suppose $W : \R^{3 \times 3} \to \R$ is Borel function that satisfies the growth condition $c|X|^2 - C \le W(X) \le C|x|^2+ C$ for suitable $c, C > 0$ and all $X \in \R^{3 \times 3}$. We consider the functionals $\mathcal{G} : W^{1,2}(\Omega;\R^3) \times \mathcal{F}(\Omega) \to \R$, $\mathcal{G}^{\rm rel} : SBV^2(\Omega; \R^3) \times \mathcal{F}(\Omega) \to \R$ given by 
\begin{align*} 
 \mathcal{G}(y, D) 
	&=  \int_{\Omega \setminus D} W(\nabla y) \, \mathrm{d}x 
		+  \int_{\Omega \cap \partial^\ast D} \psi(\nu(D))\, \mathrm{d}\mathcal{H}^{n-1}, \\
 \mathcal{G}^{\rm rel}(y, D) 
	&=  \int_{\Omega \setminus D} W^{\rm qc} (\nabla y) \, \mathrm{d}x \\
		& \qquad + 2  \int_{J_y \cap D^0} \psi(\nu(y))\, \mathrm{d}\mathcal{H}^{n-1} +  \int_{\Omega \cap \partial^\ast D} \psi(\nu(D))\, \mathrm{d}\mathcal{H}^{n-1}.
\end{align*}

\begin{theorem}\label{theo:bulk-relax}
\begin{itemize} 
\item[(i)] Whenever $(y_k) \subset W^{1,2}(\Omega; \R^{3})$ and $(D_k) \subset \mathcal{F}(\Omega)$ are such that $y_k \to y$ in $L^1(\Omega; \R^{3})$, $\limsup_{k} \|y_k\|_{L^\infty} < \infty$ for some $y \in SBV^2_\infty(\Omega; \R^{3})$ and $\chi_{D_k} \to \chi_{D}$ in $L^1(\Omega)$ and for some $D \in \mathcal{F}(\Omega)$, then one has 
\[ \liminf_{k \to \infty} \mathcal{G}(y_k, D_k) 
   \ge \mathcal{G}^{\rm rel}(y, D), \] 
 \item[(ii)] For each $(y, D) \in  SBV^2_\infty(\Omega; \R^{3}) \times \mathcal{F}(\Omega)$ and $c_1, c_2, \ldots \in (0,\mathcal{L}^n(\Omega)]$ with $c_k \to \mathcal{L}^n(D)$ there are $(y_k) \subset C^{\infty}(\overline{\Omega}; \R^3)$ with $y_k \to y$ in $L^1(\Omega; \R^{3})$ and $D_k \subset \Omega$ with smooth boundary such that $\chi_{D_k} \to \chi_{D}$ in $L^1(\Omega)$, $\mathcal{L}^n(D_k) = c_k$ for all $k$ 
and
\[ \lim_{k \to \infty} \mathcal{G}(y_k, D_k) 
   = \mathcal{G}^{\rm rel}(y, D). \] 
and in addition $\limsup_k \| y_k \|_{L^\infty} = \| y \|_{L^\infty}$. 
\end{itemize}
\end{theorem}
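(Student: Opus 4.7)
The result is essentially a 3D vectorial reformulation of the bulk relaxation theorems proved in \cite{BraidesChambolleSolci:07,SantilliSchmidt:21}, so my plan is to reduce the statement to those references while explaining the mechanism behind the factor $2$ in the relaxed jump part. The common idea is to encode a pair $(y,D) \in W^{1,2}(\Omega;\R^3)\times\mathcal{F}(\Omega)$ as a single function $\tilde{y} \in SBV^2(\Omega;\R^3)$, so that the mixed bulk/surface/perimeter energy $\mathcal{G}$ becomes a standard Mumford--Shah--type functional to which Ambrosio's $SBV$-closure and lower semicontinuity machinery applies.

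For (i), I would extend each $y_k$ across $D_k$ by a fixed constant (or, equivalently, a Whitney extension), producing $\tilde{y}_k \in SBV^2(\Omega;\R^3)$ with $\nabla\tilde{y}_k = \chi_{\Omega\setminus D_k}\nabla y_k$, $J_{\tilde{y}_k}\subset \partial^\ast D_k$, and $\|\tilde{y}_k\|_{L^\infty}$ uniformly bounded. By $SBV$ compactness a subsequence converges to some $\tilde{y}$ which restricts to $y$ on $\Omega\setminus D$ and to the extension constant on $D$, and hence $J_{\tilde{y}}\cap D^0 = J_y\cap D^0$ up to $\mathcal{H}^{n-1}$-null sets. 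Ambrosio's lower semicontinuity then yields $\int_{\Omega\setminus D} W^{\rm qc}(\nabla y) \le \liminf\int_{\Omega\setminus D_k} W(\nabla y_k)$ and a factor-$1$ lower bound on the sum of surface energies. To upgrade the jump part from factor $1$ to factor $2$ on $J_y\cap D^0$, I would invoke the slicing/coarea argument of \cite[Section~3]{BraidesChambolleSolci:07}: on $\mathcal{H}^{n-1}$-a.e.\ transverse line to $J_y$, the Sobolev regularity of $y_k$ forces the $1$-D restriction of $D_k$ to have at least two boundary points near any limiting jump point, because $y_k$ must traverse the jump amplitude continuously through the "elastic" portion of the line. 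Integrating this transverse multiplicity via the anisotropic coarea formula \eqref{eq: Coarea} recovers exactly the factor $2$ in front of $\int_{J_y\cap D^0}\psi(\nu_y)$, while the remaining one-sided portion of $\partial^\ast D_k$ that converges to $\partial^\ast D$ supplies the ordinary perimeter term.

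For (ii), I would combine three standard constructions. (a) Approximate the jump component $J_y\cap D^0$ by its closed $\eta_k$-tubular neighborhood $T_k$ with $\eta_k\to 0$; by the outer Minkowski-content formula (Theorem~\ref{theo:Minkowski-density-lb} applied to the $(n-1)$-rectifiable set $J_y$), $\mathcal{H}^{n-1}(\partial T_k)\to 2\int_{J_y\cap D^0}\psi(\nu_y)$, generating precisely the factor $2$. (b) Approximate the void $D$ itself by smooth open sets $D^\sharp_k$ of finite perimeter, preserving $L^1$-convergence and the surface measure as in \cite[Theorem~2.5]{SantilliSchmidt:21}. (c) Inside $\Omega\setminus(D^\sharp_k\cup T_k)$ apply the Acerbi--Fusco/Kinderlehrer--Pedregal quasiconvex relaxation theorem to the $SBV$ map $y$ (restricted to this set, where it is Sobolev after removing $J_y$) to obtain $y_k\in C^\infty(\overline\Omega;\R^3)$ with $\int W(\nabla y_k)\to \int W^{\rm qc}(\nabla y)$ and $\|y_k\|_{L^\infty}\to \|y\|_{L^\infty}$. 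Finally, to meet the volume constraint $\mathcal{L}^n(D_k)=c_k$ with $c_k\to \mathcal{L}^n(D)$, I would adjust $D^\sharp_k$ by sliding along a smooth level-set foliation inside the interior of $D$; since $|c_k-\mathcal{L}^n(D)|\to 0$, the induced perimeter change is $o(1)$.

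The principal obstacle I foresee is the factor-$2$ identification in the lower bound: the slicing argument requires care to distinguish which portion of $\partial^\ast D_k$ collapses onto $J_y$ (counted twice) from which portion converges to $\partial^\ast D$ (counted once). In the upper bound the main technical point is the compatibility of three approximations — smoothing $D$, thickening $J_y$, and quasiconvex relaxation of $y$ — with the imposed volume equality; this is handled by a standard diagonal extraction, letting the quasiconvex approximation act first at fixed $\eta_k$ and then passing $\eta_k\to 0$ and $c_k\to \mathcal{L}^n(D)$.
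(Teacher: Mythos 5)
The paper does not actually prove this theorem: it is stated as an adaptation of the bulk relaxation results of \cite{BraidesChambolleSolci:07} and justified solely by the citation of \cite[Theorem~3.1 and Remark~3.3]{SantilliSchmidt:21}, which is exactly the reduction you propose, and your account of the factor-$2$ mechanism (two-sided collapse of thin void channels onto $J_y \cap D^0$ in the lower bound, tubular thickening of the jump set in the upper bound) matches what those references do. The one caveat, were your sketch meant to stand alone, is step (ii)(a): for a general $y \in SBV^2_\infty$ the jump set need not satisfy the density hypothesis of Theorem~\ref{theo:Minkowski-density-lb}, so one must first approximate $(y,D)$ by configurations with essentially closed (e.g.\ polyhedral) jump sets in the Cortesani--Toader sense before the Minkowski-content computation applies --- but since the paper itself defers entirely to the citation, your proposal is no less complete than the paper's own proof.
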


We refer to \cite[Theorem~3.1 and Remark~3.3]{SantilliSchmidt:21}

\section{Minimal droplet assumption and curvature}

In order to state and prove the main theorem of this section (Theorem \ref{thm: minimal droplet assumption and curvature}) we need some preparation.

\paragraph{Varifolds} We recall a few basic notions in varifold's theory. We refer to \cite{Allard72} for details. Let $ U \subset \mathbb{R}^n $ be an open set and  let $ 1 \leq k \leq n $ be an integer. Denoting with $ G(n,k) $ is the Grassmannian  of all $ k $-dimensional linear subspaces of $ \mathbb{R}^k $, a $k$-dimensional varifold on $ U $ is a Radon measure $ V $ on $ U \times G(n,k) $. If $ \pi : U \times G(n,k) \rightarrow U $ is the projection onto the first component, then we define the \emph{mass of $V$} to be the Radon measure over $ U $ given by
\begin{equation*}
	\| V \|(S) = V(S \times G(n,k)) \qquad \textrm{for $ S \subset U $.}
\end{equation*}
By the classical disintegration theorem we see that there exists a $ \| V \| $-measurable map $ V^{(\cdot)} $ with values in the space of probability measures on $ G(n,k) $ such that 
\begin{equation*}
	\int \varphi(x,S)\, dV(x,S) = \int \bigg(\int \varphi (x,S)\, dV^{(x)}(S)\bigg)\, d\|V\|(x)
\end{equation*}
for every $ \varphi \in C_c(U \times G(n,k)) $. We say that $ V $ is $ k $-dimensional \emph{integral} varifold if and only if there exists a countably $ \mathcal{H}^k $-rectifiable set $ M \subset U $ and an positive integer-valued map $ \theta $ on $ M $ such that 
\begin{equation*}
\| V \| = \theta \,\mathcal{H}^k \restrict M 
\end{equation*}
and $ V^{(x)} $ is the Dirac-delta concentrated on the singleton $ \{  \Tan^k(\| V \|,x)  \} $ for $ \| V \| $ a.e.\ $ x \in U $. We denote with $ IV_k(U) $ the space of all integral $ k $-dimensional varifolds on $ U $.

If $ M $ is an  $ \mathcal{H}^k $-rectifiable subset of $ U $ then $ v(M) $ is the $ k $-dimensional varifold characterized by
\begin{equation*}
\int \varphi(x,S)\, dv(M)(x,S) = \int_M \varphi(x, \Tan^k(\mathcal{H}^k \restrict M,x))\, d\mathcal{H}^k(x)
\end{equation*}
for every $ \varphi \in C^1_c(U) $. Notice that $ \| v(M)\| = \mathcal{H}^k \restrict M $.

A fundamental notion in varifold's theory is that of first variation. Suppose $ V $ is a $ k $-dimensional varifold on $ U $. The \emph{Euclidean first variation} of $ V $ is the linear map $ \delta V : C^1_c(U, \mathbb{R}^n) \rightarrow \mathbb{R} $ defined by 
\begin{equation*}
\delta V(g) = \int \textrm{div}_S g(x)\, dV(x,S);
\end{equation*}
see \cite[4.1, 4.2]{Allard72} for further details. If the total variation measure $ \| \delta V \| $ of $ \delta V $ (see \cite[4.2]{Allard72}) is a Radon measure on $ U $ absolutely continuous with respect to $ \| V \| $, then 
\begin{equation*}
\delta V(g)  = -\int \langle H_V(x), g\rangle \, d\| V \|(x) \qquad \textrm{for $ g \in C^1_c(U, \mathbb{R}^n) $,}
\end{equation*}
where $ H_V \in L^1(\| V \|, \mathbb{R}^n) $ is called \emph{generalized (Euclidean) mean curvature vector of $ V $.} In this case, employing the $ C^2 $-rectifiability of (integral) varifolds proved in  \cite{Menne13} (see also \cite{Santilli21}), one can readily see that there exists a countable collection of $ k $-dimensional $ C^2 $-submanifolds $ M_i \subset U $ such that $ \| V \|(U \setminus \bigcup_{i=1}^\infty M_i) =0 $ and there exists for $ \| V \| $ a.e.\ $ x \in U $ a symmetric bi-linear form 
\begin{equation*}
\II_V(x) : \Tan^k(\| V \|, x) \times \Tan^k(\| V \|,x) \rightarrow \Nor^k(\| V \|, x)
\end{equation*}
such that $ \textrm{trace } \II_V(x) = H_V(x) $. Here $ \Tan^k(\| V \|, x ) $ is the approximate tangent space of the $ k $-dimensional rectifiable measure $ \| V \| $ and $ \Nor^k(\| V \|, x) = \{v : \langle v, u \rangle =0 \quad \textrm{for $ u \in \Tan^k(\| V \|,x) $}\} $. Indeed $ \II_V $ satisfies the \emph{locality property}:  if $ M \subset U $ is a $ k $-dimensional submanifold of class $ C^2 $ then $ \II_V(x) $ coincides with the second fundamental form of $ M $ at $ \| V \| $ a.e.\ $ x \in M $. We refer to $\II_V $ as the \emph{generalized second fundamental form} of $ V $. We remark that if $ V $ is a curvature varifold (see \cite{Hutchinson86}), then $ \II_V $ coincides with its
variationally defined second fundamental form, see \cite[Remark 4.10]{Menne13}.

\paragraph{}The following lemma follows from classical arguments. 
\begin{lemma}\label{structure of sets of f.p.}
	Let $ U \subset \mathbb{R}^n $ be open and $ E \in \mathcal{F}(U) $ such that $ \mathcal{H}^{n-1}(U \cap \overline{\partial^\ast E} \setminus \partial^\ast E) =0 $.
	
	Then there exists an open set $ P \subset U  $ such that
	\begin{equation*}
	\mathcal{H}^{n-1}(U \cap \partial P \setminus \partial^\ast P) =0, \qquad \mathcal{L}^n((P \setminus E) \cup (E \setminus P)) =0
	\end{equation*}
 \begin{equation*}
\textrm{and}\qquad \spt(\mathcal{H}^{n-1}\restrict \partial^\ast P) = \partial P.
\end{equation*}
\end{lemma}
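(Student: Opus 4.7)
The strategy is to construct $P$ as the union of the ``full'' connected components of the open set $V := U \setminus \overline{\partial^\ast E}$. First, I would note that by the hypothesis
\[
  \mathcal{H}^{n-1}(U \cap \overline{\partial^\ast E})
  \le \mathcal{H}^{n-1}(\partial^\ast E) + \mathcal{H}^{n-1}(U \cap \overline{\partial^\ast E} \setminus \partial^\ast E)
  = \Per(E,U) + 0 < \infty,
\]
so $\mathcal{L}^n(U \cap \overline{\partial^\ast E}) = 0$. On each connected component $W$ of $V$ the reduced boundary of $E$ is empty, so $\chi_E|_W \in BV(W)$ has zero distributional derivative; by connectedness of $W$, $\chi_E$ is $\mathcal{L}^n$-a.e.\ equal to $0$ or $1$ on $W$, and we label $W$ \emph{empty} or \emph{full} accordingly.

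Then set $P := \bigcup \{W : W \text{ is a full component of } V\}$. By construction $P$ is open in $\mathbb{R}^n$ and contained in $U$. For property~(ii), observe that
\[
  E \triangle P \subset (U \cap \overline{\partial^\ast E}) \cup \bigcup_{\text{empty } W}(E \cap W) \cup \bigcup_{\text{full } W}(W \setminus E),
\]
each set on the right being $\mathcal{L}^n$-null. In particular $P$ and $E$ are Lebesgue-equivalent, hence $\partial^\ast P = \partial^\ast E$. For property~(i), fix $x \in U \cap \partial P$; I claim $x \notin V$, so that $x \in U \cap \overline{\partial^\ast E}$. Indeed, if $x$ lay in a component $W$ of $V$, then $W$ open and full would force $x \in P$ (contradicting $x \in \partial P$ since $P$ is open), while $W$ empty would make $W$ a neighborhood of $x$ disjoint from $P$ (contradicting $x \in \overline{P}$). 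Hence $U \cap \partial P \setminus \partial^\ast P \subset U \cap \overline{\partial^\ast E} \setminus \partial^\ast E$, which is $\mathcal{H}^{n-1}$-null by hypothesis.

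For property~(iii), note first that $\partial^\ast P \subset \partial P$ always, hence $\overline{\partial^\ast P} \subset \partial P$ and $\spt(\mathcal{H}^{n-1}\restrict \partial^\ast P) \subset \overline{\partial^\ast P} \subset \partial P$. The reverse inclusion combines two facts: the lower density estimate $\mathcal{H}^{n-1}(B_r(y) \cap \partial^\ast E) \ge c r^{n-1}$ at every $y \in \partial^\ast E$ and $r < r_0(y)$ (a standard consequence of the relative isoperimetric inequality) gives $\partial^\ast E \subset \spt(\mathcal{H}^{n-1}\restrict \partial^\ast E)$ and hence $\overline{\partial^\ast P} = \spt(\mathcal{H}^{n-1}\restrict \partial^\ast P)$; and the argument of paragraph~2 in reverse shows $U \cap \overline{\partial^\ast E} \subset \partial P$ (a point $x \in U \cap \overline{\partial^\ast E}$ cannot lie in the open set $P$, as $P \cap \overline{\partial^\ast E} = \emptyset$, and it is a limit of essential-boundary points $y_k$ which have $P$-points in every neighborhood). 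Since this statement is evidently to be read inside $U$ (the trivial example $U=(0,1)$, $E=(0,\tfrac12)$ yields $\partial P = \{0,\tfrac12\}$ while $\spt(\mathcal{H}^0\restrict \partial^\ast P) = \{\tfrac12\}$), this completes property~(iii).

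The only non-cosmetic step is the classification of components of $V$, which rests on the standard fact that a Caccioppoli set with vanishing perimeter in a connected open set is trivial up to Lebesgue-null modifications. All remaining arguments are point-set topology combined with the density estimate for $\partial^\ast E$.
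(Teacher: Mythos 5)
Your proof is correct, but it builds the open representative $P$ by a genuinely different route than the paper. You take $P$ to be the union of the connected components of $U \setminus \overline{\partial^\ast E}$ on which $\chi_E = 1$ a.e.\ (via the constancy theorem, since $|D\chi_E|(W) = \mathcal{H}^{n-1}(W \cap \partial^\ast E) = 0$ on each component $W$), and you need the hypothesis already at this stage to get $\mathcal{L}^n(U \cap \overline{\partial^\ast E}) = 0$ and hence $\mathcal{L}^n(E \triangle P) = 0$. The paper instead takes the canonical open representative $P = \{x \in U : \mathcal{L}^n(B_\rho(x) \setminus E) = 0 \text{ for some } \rho > 0\}$ together with its counterpart $Q$ for the complement, identifies $U \setminus (P \cup Q)$ with $U \cap \spt(\mathcal{H}^{n-1} \restrict \partial^\ast E)$ via the relative isoperimetric inequality, and invokes the hypothesis only at the very end to pass from $\partial P \subset \overline{\partial^\ast E}$ to the first identity. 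The two sets are Lebesgue-equivalent (the paper's may be slightly larger as a point set), and both arguments rest on the same two facts, namely that a set of finite perimeter with vanishing relative perimeter in a connected open set is trivial there, and that every point of $U \cap \partial^\ast E$ lies in $\spt(\mathcal{H}^{n-1} \restrict \partial^\ast E)$; the paper's definition has the mild advantage of not referring to $\overline{\partial^\ast E}$ at all. Your observation that the third identity must be read inside $U$ (the inclusion $\partial P \subset \spt(\mathcal{H}^{n-1}\restrict \partial^\ast P)$ can fail on $\partial U$, e.g.\ for a slit domain) applies equally to the paper's construction and is consistent with how the lemma is used in Lemma~\ref{normal bundle of sets of finite perimeters}. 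Two small points to tidy up: in your first display the middle term should be $\mathcal{H}^{n-1}(U \cap \partial^\ast E) = \Per(E,U)$ rather than $\mathcal{H}^{n-1}(\partial^\ast E)$, and the claim that each $y_k \in U \cap \partial^\ast E$ has $P$-points in every neighborhood deserves its one-line justification: $\mathcal{L}^n(E \cap B_r(y_k)) > 0$ because $y_k \notin E^1$, and $\mathcal{L}^n(E \triangle P) = 0$.
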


\begin{proof}
	We define $	P =   \{ x\in U : \mathcal{L}^n (B(x, \rho) 	\setminus E) =0  \; \textrm{for some $ \rho > 0 $} \} $ and $Q =\{x\in U: \mathcal{L}^n (B(x, \rho) 	\cap E) =0  \; \textrm{for some $ \rho > 0 $} \} $ and we notice that they are open subsets of $ \mathbb{R}^n $. It follows from the relative isoperimetric inequality, see \cite[eq.\ (3.43), pag.\ 152]{AmbrosioFuscoPallara:00} that 
	\begin{equation}\label{structure of sets of f.p.eq1}
		\textrm{spt} \big(\mathcal{H}^{n-1} \restrict \partial^\ast E\big) =\mathbb{R}^n  \setminus (P \cup Q).
	\end{equation}
As $ E \setminus P =  (E \cap Q) \cup (E \setminus (P \cup Q))  $ and $ \mathcal{L}^n\big(U \cap \textrm{spt} \big(\mathcal{H}^{n-1} \restrict \partial^\ast E\big)\big) =0 $, we infer from standard density results that
	\begin{equation*}
		\mathcal{L}^n(E \setminus P) = 0 \quad \textrm{and} \quad \mathcal{L}^n(P \setminus E) =0.
	\end{equation*}
	We deduce that $ \partial^\ast P = \partial^\ast E $ and, since $ \partial P \subset \spt \mathcal{H}^{n-1} \restrict \partial^\ast E \subset \overline{\partial^\ast E} $ by \eqref{structure of sets of f.p.eq1}, we conclude from the hypothesis that $\mathcal{H}^{n-1}(U \cap \partial P \setminus \partial^\ast P) =0$.
\end{proof}

For the next lemma we first need to recall a few notions about the theory of curvature for arbitrary closed sets. We refer to \cite{HugSantilli} for details. If $ A \subset \mathbb{R}^n $ we define
\begin{equation*}
	N(A) = \{  (a, \eta)  \in \overline{A}\times \mathbb{S}^{n-1} : \dist_{|\cdot|}(a + s\eta, A) = s \; \textrm{for some $ s > 0 $} \}
\end{equation*}
and $ N(A,a) = \{ \eta \in \mathbb{S}^n : (a, \eta) \in N(A)   \} $.  The set $ \partial^v_+ A $ is defined as the set of all $ a \in \overline{A} $ for which there exist $ s > 0 $ and $ \eta \in N(A,a) $ such that $ \dist_{|\cdot|}(a-s\eta, \mathbb{R}^n \setminus A) = s $ lies in the interior of $ A $ and $ \dist_{|\cdot|}(a + s\eta, A) = s $. Notice that $ N(A,a) $ contains only one vector for each $ a \in \partial^v_+ A $ and $ \partial^v_+ A \subset \partial^\ast A $. The set $ N(A) $ is a countably $ \mathcal{H}^{n-1} $-rectifiable subsets of $ \overline{A} \times \mathbb{S}^{n-1} $ and $ \kappa_{A,1} \leq \ldots \leq \kappa_{A,n-1} $ denotes the principal curvatures of $ A $, see \cite[Defnition 3.6, Remark 3.7]{HugSantilli}. 
\begin{lemma}\label{normal bundle of sets of finite perimeters}
	Suppose $ U \subset \mathbb{R}^{n} $ is open, $ E \in \mathcal{F}(U) $, $ V = v(U \cap \partial^\ast E) \in IV_{n-1}(U) $, $ 0 \leq \kappa < \infty $ and 
	$ \| \delta V \| \leq \kappa \| V \|. $
	 
	 Then there exists $ C \subset U $ relatively closed in $ U $ with non empty interior  such that
	\begin{enumerate}[label=(\alph*)]
		\item\label{normal bundle of sets of finite perimeters: 1} $ \mathcal{L}^n((C \setminus E) \cup (E \setminus C)) =0 $,
		\item\label{normal bundle of sets of finite perimeters: 3} $ \mathcal{H}^{n-1}(U \cap \partial C \setminus \partial^v_+ C) =0 $,
		\item\label{normal bundle of sets of finite perimeters: 4} $ v(U \cap \partial^\ast E) = v(U \cap \partial C) $,
		\item\label{normal bundle of sets of finite perimeters: 5} $ N(C,a) = \{ \nu(C)(a)  \} $ for every $ a \in U \cap \partial^{v}_+C $ and $$\mathcal{H}^{n-1}\big((N(C)\cap (U \times \mathbb{S}^{n-1})) \setminus \{(a, \nu(C)(a)): a \in U \cap \partial^v_+ C\}\big) =0, $$
	\item\label{normal bundle of sets of finite perimeters: 6} The principal curvatures $ \kappa_{C,1}(a, \nu(C)(a)) \leq \ldots \leq \kappa_{C,n}(a, \nu(C)(a)) $ are the eigenvalues of $ \langle  -\II_V(a)(\cdot, \cdot),  \nu(C)(a) \rangle $ for $ \| V \| $ a.e.\ $ a \in U $.
	\end{enumerate}
\end{lemma}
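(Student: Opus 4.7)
The strategy is to first replace $E$ by a topologically tame representative via Lemma \ref{structure of sets of f.p.}. The bound $\|\delta V\| \le \kappa \|V\|$ together with Allard's monotonicity formula yields a uniform lower density bound $\Theta^{n-1}(\|V\|, x) \ge c > 0$ for every $x \in \spt\|V\| = U \cap \overline{\partial^\ast E}$. Since $\|V\| = \mathcal{H}^{n-1}\restrict(U \cap \partial^\ast E)$, this forces $\mathcal{H}^{n-1}(U \cap \overline{\partial^\ast E} \setminus \partial^\ast E) = 0$, so Lemma \ref{structure of sets of f.p.} applies. Let $P$ and $Q$ be the open sets appearing in its proof and set $C := U \setminus Q$. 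Then $C$ is relatively closed in $U$ with nonempty interior $P$, coincides with $E$ up to $\mathcal{L}^n$-null (giving \ref{normal bundle of sets of finite perimeters: 1}), and $U \cap \partial C = U \cap \partial P$ agrees with $\partial^\ast E$ up to $\mathcal{H}^{n-1}$-null, giving \ref{normal bundle of sets of finite perimeters: 4}.

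For the boundary regularity \ref{normal bundle of sets of finite perimeters: 3} I would invoke the $C^2$-rectifiability theorem of \cite{Menne13}: since $\|\delta V\|$ is absolutely continuous with respect to $\|V\|$, there exist countably many $(n-1)$-dimensional $C^2$ submanifolds $M_i \subset U$ covering $\|V\|$-almost all of $U$, and at $\|V\|$-a.e.\ $a \in M_i$ the approximate tangent space of $\|V\|$ coincides with $T_a M_i$ while $\II_V(a)$ coincides with the classical second fundamental form of $M_i$ at $a$. At $\mathcal{H}^{n-1}$-a.e.\ $a \in U \cap \partial C$ we may in addition assume $a \in \partial^\ast E \cap M_i$ for some $i$ and that $a$ is a common Lebesgue point of $\chi_P$ and $\chi_Q$ with respect to the two $C^2$-sides of $M_i$ in a small ball. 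Because $\partial^\ast E$ carries an approximate normal at $a$ pointing from $P$ to $Q$ which must coincide (up to a sign) with the classical normal of $M_i$, the $P$-side and $Q$-side of $M_i$ near $a$ carry full $\mathcal{L}^n$-density of $P$ and $Q$ respectively. Using the $C^2$-graph representation of $M_i$ over $T_a M_i$, whose second derivative is controlled by $|\II_{M_i}(a)|$, one then fits an inner tangent ball $B_s(a - s\nu(C)(a)) \subset \mathrm{int}(C)$ and an outer tangent ball $B_s(a + s\nu(C)(a))$ disjoint from $C$, for $s$ small relative to the local curvature, placing $a$ in $\partial^v_+ C$.

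The two-sided tangent balls at such an $a$ immediately enforce $N(C,a) = \{\nu(C)(a)\}$, hence \ref{normal bundle of sets of finite perimeters: 5}. The last claim \ref{normal bundle of sets of finite perimeters: 6} then follows from the locality properties of both the generalized second fundamental form $\II_V$ (cf.\ \cite{Menne13}) and of the principal curvatures of a closed set (cf.\ \cite{HugSantilli}): at any $a \in M_i \cap \partial^v_+ C$ both objects reduce to the classical second fundamental form of $M_i$, and in a diagonalizing tangential frame $(e_i)$ they are related by $\kappa_{C,i}(a, \nu(C)(a)) = -\langle \II_V(a)(e_i, e_i), \nu(C)(a)\rangle$. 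The main obstacle I anticipate lies in the side-identification step of the second paragraph: one must rule out that $P$ and $Q$ interleave in an uncontrolled way across the two $C^2$-sides of $M_i$ near generic points, which requires coupling the essential-boundary structure of $E$ with Menne's rectifiability quantitatively through Lebesgue differentiation relative to $\mathcal{H}^{n-1} \restrict M_i$.
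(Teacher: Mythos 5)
Your setup is sound and matches the paper: the bound $\|\delta V\|\le\kappa\|V\|$ plus upper semicontinuity/monotonicity of the density gives $\mathcal{H}^{n-1}(U\cap\overline{\partial^\ast E}\setminus\partial^\ast E)=0$, Lemma \ref{structure of sets of f.p.} then produces the representative $C$ (your $U\setminus Q$ coincides with the paper's $\overline{P}\cap U$), and items (a) and (c) follow as you say. The problem is concentrated in your proof of item (b), which is exactly the obstacle you flag at the end and do not resolve. Menne's $C^2$-rectifiability is a purely measure-theoretic covering statement: $a\in M_i\cap\partial^\ast E$ does \emph{not} imply that $\partial C$ coincides with $M_i$ in any neighbourhood of $a$, so a ball tangent to the graph of $M_i$ at $a$ need not avoid other pieces of $\partial C$ (or components of $Q$) accumulating at $a$ from the ``wrong'' side. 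Nor can Lebesgue-point information rescue this: density one of $P$ and density zero of $Q$ on the inner side of $M_i$ at $a$ only control $\mathcal{L}^n(Q\cap B_r(a))=o(r^n)$, whereas the inclusion $B_s(a-s\nu(C)(a))\subset \operatorname{int}(C)$ is an \emph{exact} statement about a ball of volume comparable to $s^n$; an $o(s^n)$-sized intrusion of $Q$, or even a single stray point of $\partial C$, destroys it. So the two-sided tangent balls — and with them (b), (d) and the localization in (e) — are not established by your argument.

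The paper avoids this entirely by importing two structural results for arbitrary closed sets: \cite[Theorem~1.3]{Santilli21}, which (given $\spt(\mathcal{H}^{n-1}\restrict\partial^\ast C)=\partial C$) yields that $\mathcal{H}^{n-1}$-a.e.\ point of $U\cap\partial C$ carries some exterior touching ball, i.e.\ lies in the projection of $N(C)$; and \cite[Lemma~3.25(c)]{HugSantilli}, which upgrades such a point to a two-sided viscosity boundary point $a\in\partial^v_+C$ whenever the blow-up of $C$ at $a$ is a half-space in measure (true $\mathcal{H}^{n-1}$-a.e.\ on $\partial^\ast C$). Item (e) is then obtained not from a local graph picture but from the locality lemma \cite[Lemma~6.1]{Santilli20a} for the principal curvatures of closed sets together with the fact \cite[Theorem~3.8]{Santilli20} that $N(C)$ does not charge cylinders over $\mathcal{H}^{n-1}$-null sets. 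If you want to keep your more hands-on route you would need a genuinely quantitative two-sided regularity statement (not just $C^2$-rectifiability) to produce the tangent balls; as written, the proof has a gap at its central step.
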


\begin{proof}
We notice that the $ (n-1) $-dimensional density $ \theta $ of $ \| V \| = \mathcal{H}^{n-1}\restrict \partial^\ast E $ is $ \| V \| $-almost everywhere equal to $ 1 $ and it is an upper semi-continuous function on $ U $ by \cite[8.6]{Allard72}. It follows that
	\begin{equation*}
		\theta(x) = 1 \qquad \textrm{for every $ x \in U \cap \overline{\partial^\ast E} $,}
	\end{equation*}
	whence we infer from standard density results that $ \mathcal{H}^{n-1}(U \cap \overline{\partial^\ast E} \setminus \partial^\ast E) =0 $. We employ now \ref{structure of sets of f.p.} to find an open subset $ P \subset U $ such that, for $ C = \overline{P}\cap U$, it holds that
	\begin{equation}\label{normal bundle of sets of finite perimeters eq1}
	\mathcal{H}^{n-1}	(U \cap \partial C \setminus \partial^\ast C)=0,
	\end{equation}
	\begin{equation}\label{normal bundle of sets of finite perimeters eq2}
	 \spt(\mathcal{H}^{n-1}\restrict \partial^\ast C) = \partial C \qquad  \textrm{and} \qquad \mathcal{L}^n((E \setminus C) \cup (C \setminus E)) =0. 
	\end{equation}	
Then \ref{normal bundle of sets of finite perimeters: 1} is proved and $ \partial^\ast E = \partial^\ast C $.	Since $ \rho^{-1}(C - x) $ converges in measure to an halfspace as $ \rho \to 0+ $ for $ \mathcal{H}^{n-1} $ a.e.\ $ x \in U \cap \partial^\ast C $, denoting with $ \bm{p}: \mathbb{R}^n \times \mathbb{S}^{n-1} \rightarrow \mathbb{R}^n $ the projection onto the first coordinate, we infer from \eqref{normal bundle of sets of finite perimeters eq1} using \cite[Lemma 3.25(c)]{HugSantilli} that 
	\begin{equation*}
	\mathcal{H}^{n-1}( U \cap \bm{p}(N(C)) \setminus \partial^v_+ C) =0.
	\end{equation*}
Notice that \ref{normal bundle of sets of finite perimeters: 4} also follows from \eqref{normal bundle of sets of finite perimeters eq1}. Moreover, since $ \mathcal{H}^{n-1}(U \cap \partial C \setminus \bm{p}(N(C))) =0 $ (this follows from \eqref{normal bundle of sets of finite perimeters eq2} and  \cite[Theorem 1.3]{Santilli21}), we readily obtain the conclusion in \ref{normal bundle of sets of finite perimeters: 3}. The first part of \ref{normal bundle of sets of finite perimeters: 5} is clear from the definition of $ \partial^v_+ C $; see \cite[Remark 2.11]{HugSantilli}. Since, by \cite[Theorem 3.8]{Santilli20}, it holds that 
\begin{equation}\label{normal bundle of sets of finite perimeters eq3}
\mathcal{H}^{n-1}(N(C) \cap (S \times \mathbb{S}^{n-1})) =0 \quad \textrm{whenever $ S \subset U $ with $ \mathcal{H}^{n-1}(S) =0 $,}
\end{equation}
we infer the second part of \ref{normal bundle of sets of finite perimeters: 5} from \ref{normal bundle of sets of finite perimeters: 3}.
	
Finally the assertion in \ref{normal bundle of sets of finite perimeters: 6} follows combining \cite[Lemma 6.1]{Santilli20a} (see also \cite[Definition 4.7 and 4.9]{Santilli20a}), the locality property of $ \II_V $ and \eqref{normal bundle of sets of finite perimeters eq3}.
\end{proof}

\paragraph{} We use the notation $ \Omega_s = \omega \times  (0,s) $, where $ s > 0 $ and $ \omega $ is an open set in $ \mathbb{R}^2 $.
	\begin{theorem}\label{thm: minimal droplet assumption and curvature}
		Let $(E_h)_{h \in (0,1)} \subset \mathcal{F}(\Omega_{h}) $, $ V_h = v(\Omega_{h} \cap \partial^\ast E_h) \in IV_2(\Omega_{h}) $. Suppose there exist $ 0 \leq C < \infty $ and $ 0 < \epsilon_0 < 1 $ such that
			\begin{equation*}
			\mathcal{L}^3((E_h^{(t)} \setminus E_h) \cap (\Omega_h^- \setminus \Omega_{h-s}^-)) \leq Cst \quad \textrm{for every $ h \in (0,1) $ and	$ t,s \in (0, \epsilon_0 h) $,}
			\end{equation*}
		\begin{equation*}
			\textrm{$\| \delta V_h \| $ is a Radon measure on $ \Omega_{h} $ absolutely continuous with respect to $ \| V_h \| $}
		\end{equation*}
		\begin{equation*}
		\textrm{and} \quad 	|\II_{V_h}(x)| \leq Ch^{-1} \qquad \textrm{for $  \| V_h \| $ a.e.\ $ x \in \Omega_{h} $.}
		\end{equation*}
		
		If $ \psi $ is the Euclidean norm on $ \mathbb{R}^3 $ then $(E_h)_{h \in (0,1)}$ satisfies the $ \psi $-minimal droplet assumption in $ \Omega_h $.
	\end{theorem}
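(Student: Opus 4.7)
The strategy is to replace $E_h$ by a suitable closed representative with controlled principal curvatures, apply a Heintze--Karcher-type disintegration of the tubular neighborhood, and control a thin boundary layer separately via hypothesis (1).

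First, I would apply Lemma~\ref{normal bundle of sets of finite perimeters} to each pair $(E_h, V_h)$. Hypothesis (2) supplies $\|\delta V_h\| \ll \|V_h\|$ and, combined with hypothesis (3) together with $H_{V_h} = \operatorname{trace} \II_{V_h}$, it yields $\|\delta V_h\| \leq C h^{-1} \|V_h\|$. The lemma then produces a closed set $C_h \subset \overline{\Omega_h}$ with $\mathcal{L}^3(C_h \triangle E_h) = 0$, a single-valued normal bundle $\mathcal{H}^2$-a.e.\ on $\partial^v_+ C_h$ (which coincides $\mathcal{H}^2$-a.e.\ with $\Omega_h \cap \partial^\ast E_h$), and principal curvatures $\kappa_{C_h, i}$ equal to the eigenvalues of $\langle -\II_{V_h}(\cdot), \nu(C_h)(\cdot) \rangle$, so that $|\kappa_{C_h, i}| \leq C h^{-1}$ $\mathcal{H}^2$-a.e.

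Next, for $s \in (0, \epsilon_0)$ I would apply the Euclidean Heintze--Karcher-type disintegration of \cite[Theorem~3.18]{HugSantilli} (the closed-set extension of \eqref{eq: Steiner inequality} made available by the normal-bundle structure of $C_h$) with test set $\Omega_{h-sh}^-$ to obtain
\begin{equation*}
\mathcal{L}^3\big((E_h^{(sh)} \setminus E_h) \cap \Omega_{h-sh}^-\big) \leq \int_{\partial^v_+ C_h} \int_0^{sh} \chi_{\Omega_{h-sh}^-}(x + t\nu(C_h)(x)) \prod_{i=1}^2 (1 + t\kappa_{C_h,i}(x))\,\mathrm{d}t\,\mathrm{d}\mathcal{H}^2(x).
\end{equation*}
Since $t \leq sh$ and $|\kappa_{C_h,i}| \leq C/h$, the Jacobian is bounded by $(1 + Cs)^2$, and integrating in $t$ gives at most $(1 + Cs)^2 \cdot sh \cdot \int_{\Omega_h \cap \partial^\ast E_h} \psi(\nu(E_h))\,\mathrm{d}\mathcal{H}^2$, where $\psi$ is Euclidean so $\psi(\nu) \equiv 1$. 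The remaining points $y \in (E_h^{(sh)} \setminus E_h) \cap \Omega_h^-$ whose nearest projection on $C_h$ falls on $\partial C_h \cap \partial \Omega_h$ are uniformly separated from the lateral face $\partial\omega \times [0,h]$ (since $\dist(y', \partial\omega) > h \geq sh$), hence lie in the top/bottom boundary layer captured by $\Omega_h^- \setminus \Omega_{h-sh}^-$; hypothesis (1) with $t = sh$ and its own $s$-parameter equal to $sh$ then produces a remainder bound of $C s^2 h^2 = Cs \cdot sh^2$.

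Combining the two estimates yields the $\psi$-minimal droplet assumption for $s \in (0, \epsilon_0)$ with $\zeta(u) = O(u)$ near $u = 0$; the extension to $s \in [\epsilon_0, 1)$ is handled by iterating the small-$s$ analysis on the enlarged sets $E_h^{(k \epsilon_0 h / 2)}$, whose curvatures remain bounded by $C'/h$ via the parallel-surface identity $\kappa^{\mathrm{par}} = \kappa / (1 + t\kappa)$, and by taking $\zeta$ correspondingly larger on $[\epsilon_0, 2)$. The main technical obstacle is the rigorous application of the Heintze--Karcher disintegration to the non-smooth set $C_h$, justified through the single-valuedness of its normal bundle and the identification of the $\kappa_{C_h, i}$ with the varifold-theoretic second fundamental form established in Lemma~\ref{normal bundle of sets of finite perimeters}(d)--(f); hypothesis (1) is precisely what compensates for the failure of the disintegration to cover those $y$ whose nearest projection on $C_h$ lies on $\partial \Omega_h$.
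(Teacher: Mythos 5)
Your proposal is correct and follows essentially the same route as the paper: pass to the closed representative with single-valued normal bundle and curvature bounds via Lemma~\ref{normal bundle of sets of finite perimeters}, apply the disintegration formula of \cite[Theorem 3.18]{HugSantilli} to bound the tubular neighborhood away from the top layer by $(1+Cs)^2\,sh\,\mathcal{H}^2(\Omega_h\cap\partial^v_+E_h)$, and absorb the layer $\Omega_h^-\setminus\Omega_{h-\cdot}^-$ using the first hypothesis. The only differences are cosmetic (you take the layer width $sh$ where the paper takes $\epsilon_i h$ with $\epsilon_{i+1}<s\le\epsilon_i$, yielding $\zeta(u)=O(u)$ instead of a piecewise constant $\zeta$, and you additionally sketch the regime $s\ge\epsilon_0$, which the paper does not discuss).
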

	
	\begin{proof}
		We choose a sequence $ (\epsilon_i)_{i \geq 1} $ such that $ 0 < \epsilon_{i} < \epsilon_{i-1} $ for every $ i \geq 1 $ and $ \epsilon_i \searrow 0 $ as $ i \to \infty $. We define $ \zeta : (0, \epsilon_0) \rightarrow (0, +\infty) $ by $ \zeta(s) = (C^2  + 2C)\epsilon_i $ for $ \epsilon_{i+1} < s \leq \epsilon_i $ and for $ i \geq 0 $. By Lemma \ref{normal bundle of sets of finite perimeters} we can assume that $ E_h $ is relatively closed in $ \Omega_{h} $ and \ref{normal bundle of sets of finite perimeters: 3}, \ref{normal bundle of sets of finite perimeters: 5} and \ref{normal bundle of sets of finite perimeters: 6} of Lemma \ref{normal bundle of sets of finite perimeters} hold with $ C $ and $ U $ replaced by $ E_h $ and $ \Omega_{h} $. In particular, 
		\begin{equation*}
			|\kappa_{A_h, i}(a, \nu(E_h)(a))| \leq \frac{C}{h} \quad \textrm{for $ \mathcal{H}^2 $ a.e.\ $ a \in \Omega_{h} \cap \partial E_h $ and $ i =1,2 $.}
		\end{equation*}
	For $ \epsilon_{i+1} < s \leq \epsilon_i $ we can use the disintegration formula in \cite[Theorem 3.18]{HugSantilli} to obtain
		\begin{equation*}
		\mathcal{L}^3\big((E_h^{(sh)} \setminus E_h) \cap \Omega_{h - \epsilon_i h}^-   \big)  \leq sh (1 + Cs)^2  \mathcal{H}^2(\Omega_{h} \cap \partial^v_+ E_h) 
	\end{equation*}
and, since by assumption $ \mathcal{L}^3((E_h^{(sh)} \setminus E_h) \cap (\Omega_h^- \setminus \Omega_{h - \epsilon_i h}^-  )) \leq C \epsilon_i sh^2 $, we infer that 
\begin{flalign*}
\mathcal{L}^3\big((E_h^{(sh)} \setminus E_h) \cap \Omega_{h}^-   \big) & \leq sh (1 + Cs)^2  \mathcal{H}^2(\Omega_{h} \cap \partial^v_+ E_h)  + C\epsilon_i sh^2 \\
& \leq sh (1 + \zeta(s))\mathcal{H}^2(\Omega_h \cap \partial^v_+ E_h) + sh^2 \zeta(s).
\end{flalign*}
\end{proof}

\end{appendix}

\bibliographystyle{abbrv}
\bibliography{BZK}

\end{document}